\title[Support varieties over skew complete intersections]{Support varieties over skew complete intersections via derived braided Hochschild cohomology}
\author[L.~Ferraro]{Luigi Ferraro}
\address{Department of Mathematics,
Texas Tech University, Lubbock, TX 79409, U.S.A.}
\email{lferraro@ttu.edu}
\author[W.~F.~Moore]{W.~Frank Moore}
\address{Department of Mathematics \& Statistics,
Wake Forest University, Winstom-Salem, NC 27109, U.S.A.}
\email{moorewf@wfu.edu}
\author[J.~Pollitz]{Josh Pollitz}
\address{Department of Mathematics,
University of Utah, Salt Lake City, UT 84112, U.S.A.}
\email{pollitz@math.utah.edu}
\keywords{skew complete intersections, Hochschild cohomology, support varieties, Auslander-Reiten conjecture, complexity, color DG algebras}
\subjclass[2010]{16E05, 16E30, 16E40, 16E45, 16E65}
\thanks{The third author was supported by the National Science Foundation under Grant No. 2002173.
}
\newtheorem{theorem}{Theorem}[section]
\newtheorem{lemma}[theorem]{Lemma}
\newtheorem{proposition}[theorem]{Proposition}
\newtheorem{corollary}[theorem]{Corollary}
\theoremstyle{definition}
\newtheorem{definition}[theorem]{Definition}
\newtheorem{construction}[theorem]{Construction}
\newtheorem{example}[theorem]{Example}
\newtheorem{notation}[theorem]{Notation}
\theoremstyle{remark}
\newtheorem{remark}[theorem]{Remark}
\newtheorem{chunk}[theorem]{}
\newcommand{\del}{\partial}
\DeclareMathOperator{\id}{id}
\DeclareMathOperator{\V}{V}
\DeclareMathOperator{\coker}{coker}
\newcommand{\pd}{\operatorname{pd}}
\newcommand{\op}{\operatorname{op}}
\newcommand{\xra}[1]{\xrightarrow{#1}}
\newcommand{\CC}{\mathbb{C}}
\newcommand{\f}{\bm{f}}
\newcommand{\Mod}{\mathsf{Mod}}
\newcommand{\D}{\mathsf{D}}
\newcommand{\h}{\mathsf{h}}
\newcommand{\te}{\mathsf{t}}
\renewcommand{\c}{\kappa}
\newcommand{\Ext}{\operatorname{Ext}}
\newcommand{\Tor}{\operatorname{Tor}}
\newcommand{\Der}{\operatorname{Der}}
\newcommand{\E}{\mathcal{E}}
\newcommand{\HH}{\mathrm{HH}}
\newcommand{\hh}{\mathrm{H}}
\DeclareMathOperator{\Proj}{Proj}
\DeclareMathOperator{\ann}{ann}
\DeclareMathOperator{\supp}{\mathsf{supp}}
\newcommand{\cV}{\mathcal{V}}
\newcommand{\cx}{\operatorname{cx}}
\newcommand{\kk}{\Bbbk}
\newcommand{\bsx}{\ensuremath{\mathbf{x}}}
\newcommand{\bsalpha}{{\ensuremath{\boldsymbol{\alpha}}}}
\newcommand{\bsbeta}{{\ensuremath{\boldsymbol{\beta}}}}
\newcommand{\bsgamma}{{\ensuremath{\boldsymbol{\gamma}}}}
\newcommand{\bsxi}{{\ensuremath{\boldsymbol{\xi}}}}
\newcommand{\combBracket}[2]{{\ensuremath{\begin{bmatrix}{#1}\\{#2}\end{bmatrix}}}}
\newcommand{\bbN}{\mathbb{N}}
\newcommand{\calG}{\mathscr{G}}
\newcommand{\e}{\mathrm{e}}
\newcommand{\ev}{\mathsf{ev}}
\newcommand{\Hom}{\operatorname{Hom}}
\newcommand{\dt}{\otimes^{\mathbf{L}}}
\newcommand{\dHH}[2]{\operatorname{\mathbf{HH}}(#1\!\mid\!#2)}  
\newcommand{\Ay}[2]{{#1}\!\left\langle{#2} \right\rangle}
\newcommand{\gdeg}[1]{\mathscr{G}(#1)}
\begin{document}

\begin{abstract}
In this article we study a theory of support varieties over a  skew complete intersection $R$, i.e. a skew polynomial ring modulo an ideal generated by a sequence of regular normal elements. We compute the derived braided Hochschild cohomology of $R$ relative to the skew polynomial ring and show its action  on $\Ext_R(M,N)$ is noetherian for finitely generated $R$-modules $M$ and $N$ respecting the braiding of $R$. When the parameters defining the skew polynomial ring are roots of unity we use this action to define a support theory. 
In this setting applications include a proof of the Generalized Auslander-Reiten Conjecture and that $R$ possesses  symmetric complexity.
\end{abstract}

\maketitle
\section*{Introduction}

The use of the cohomological spectrum has had a tremendous impact on modular representation theory; two notable works are those of 
 Carlson  \cite{carlson} and Quillen \cite{QuillenAnn}.
Inspired by these successes, similar theories of support have been developed and successfully applied for different kinds of algebras. For example, for restricted Lie algebras   \cite{FriedlanderParshall},  finite-dimensional graded connected cocommutative Hopf algebras  \cite{Hopf}, commutative   complete intersections \cite{AB}, and quantum complete intersections \cite{BerghErdmann}.

%In \cite{BerghErdmann}, Bergh and Erdmann defined, using Hochschild cohomology, a theory of support varieties over quantum complete intersections, a class of algebras initially studied by Manin and Avramov, Gasharov and Peeva in \cite{Manin} and \cite{AGP} respectively. Let $\kk$ be a field and let $Q=\kk_\mathfrak{q}[x_1,\ldots,x_n]$ be a skew polynomial ring, i.e. a polynomial ring such that the variables skew commute $x_ix_j=q_{i,j}x_jx_i$, with $q_{i,j}\in\kk^*$, $q_{ii}=1$ for all $i$ and $q_{i,j}=q_{j,i}^{-1}$ for all $i,j$. A quantum complete intersection is a ring of the form $Q/(x_1^{a_1},\ldots,x_n^{a_n})$ for a choice of positive integers $a_i$.
%Bergh and Erdmann developed a support variety theory for quantum complete intersections, provided the parameters $q_{i,j}$ are roots of unity. In the work cited above, Bergh and Erdmann generalize the results of Avrunin and Scott \cite{AvruninScottInv}. They do not use this theory to prove that complexity is symmetric over quantum complete intersections but, Bergh in \cite{Bergh}, using Hochschild cohomology, shows that if $M$ and $N$ are finitely generated (graded) modules over a quantum complete intersection $R$, then $\Ext_R^{\gg0}(M,N)=0$ if and only if $\Ext_R^{\gg0}(N,M)=0$, provided that the parameters $q_{i,j}$ are roots of unity.

In this paper we define and study a support variety theory for skew complete intersections, a class of rings first studied in \cite{ColorDGA} by the first and second author; skew complete intersections contain the class of commutative graded complete intersections and  the class of quantum complete intersections. Let $\kk$ be a field and let $Q=\kk_\mathfrak{q}[x_1,\ldots,x_n]$ be a skew polynomial ring, i.e., a polynomial ring such that the variables skew commute $x_ix_j=q_{i,j}x_jx_i$, with $q_{i,j}\in\kk^*$, $q_{i,i}=1$  and $q_{i,j}=q_{j,i}^{-1}$ for all $i,j$. A skew complete intersection $R$ is quotient of $Q$ by  an ideal generated by a regular sequence of normal elements. To define these support varieties we use color DG homological algebra, a theory that was developed in \cite{ColorDGA}. The support  theory itself is inspired by ideas present in \cite{AB:CD2} and further developed in  \cite{P:CS}, to  transfer the  well behaved homological properties  over $Q$ to homological properties over $R$. In particular, we prescribe the graded Ext modules over $R$ with a ring of cohomology operators for which the action is noetherian.

We arrive at this ring of cohomological operators  in two ways.  First, we realize these operators as the  derived braided Hochschild cohomology of $R$ over $Q$. It is worth contrasting this with the approach of Bergh and Erdmann in \cite{BerghErdmann} who use  classical Hochschild cohomology to define support varieties over quantum complete intersections;  classical Hochschild cohomology can be  difficult to calculate and typically possesses a complicated structure. This is  even the case for  quantum complete intersections (cf. \cite{QuantumBE,BGMS,ErdmannFinnsen,Oppermann}). The  calculation of derived braided Hochschild cohomology of $R$ over $Q$, found in \Cref{s:HH}, leads to an easier calculation of a ring of cohomology operators that has a particularly simple structure, see \Cref{s:HH}. 

Second, these operators are realized at the chain level by following the strategies in \cite{AB:CD2,P:CS}. This is the perspective that allows us to define a support theory when each $q_{i,j}$ is a root of unity. Moreover, this approach is  in line with the support theory for commutative complete intersections (cf. \cite{VPD,AB,AI,BW,Jor,P:CS}), and generalizes the support variety theory for graded complete intersections.

%coincides with the theory of Avramov and Buchweitz when the skew complete instersection is commutative.

Our main applications are  in \Cref{s:ARC} and \Cref{s:cx}. First, as the Ext modules over $R$ are noetherian over this ring of cohomology operators we generalize  results from  \cite{ColorDGA} which pertain to the Poincar\'{e} series of color $R$-modules. Further assuming  each of the $q_{i,j}$ defining $R$ is a root of unity, we can apply our theory of supports and obtain other interesting results. For example,  in \Cref{s:ARC}, we prove that color modules over such skew complete intersections satisfy the Generalized Auslander-Reiten Conjecture. Also, in \Cref{s:cx}, we  prove complexity for pairs of finitely generated color $R$-modules is symmetric in the module arguments for such skew complete intersections; therefore we deduce that if $M$ and $N$ are finitely generated color modules over a skew complete intersection $R$, then $\Ext_R^{\gg0}(M,N)=0$ if and only if $\Ext_R^{\gg0}(N,M)=0$. The latter result should be compared with the analogous result from 
 \cite{Bergh} for  modules over quantum complete intersections, with the same assumption on the $q_{i,j}$'s.

 The paper is organized as follows. In  \Cref{s:background} we recall background information on color commutative rings and DG algebras that can be found in \cite{ColorDGA}, and we provide the definition of the derived braided Hochschild cohomology of a ring. In \Cref{s:semifreediagonal} we recall the definition of skew complete intersection and make the necessary constructions needed to compute the derived braided Hochschild cohomology of a skew complete intersection in \Cref{s:HH}. In \Cref{section:severalactions} we show that two natural products on the derived braided Hochschild cohomology coincide. \Cref{s:extmodules} and \Cref{s:FiniteGen} are dedicated to defining our ring of cohomological operators at the chain level and  showing that Ext modules are finitely generated modules over this ring.  In \Cref{section:supportvarieties} we assume that the parameters $q_{i,j}$ are roots of unity, this allows us to work with a subring of cohomological operators that is   a commutative polynomial ring. We use this smaller ring to define support varieties.  The rest of \Cref{section:supportvarieties} is dedicated to the study of the properties of these support varieties and to several consequences. As discussed above, the main applications are in \Cref{s:ARC} and \Cref{s:cx}.

%%%%%%%%%%%%%%%%%%%%%%%%%%%%%%%%%%%%%%%%%%%%%%%%%%
\section{Background and conventions}
\label{s:background}
\subsection*{Color commutative rings}
Let $G$ be an abelian group and $\kk$ a field. An \emph{alternating bicharacter} on $G$ is a function $\c:G\times G\rightarrow\kk^*$ such that for all $\alpha,\beta,\sigma\in G$
\begin{align*}
\c(\sigma,\alpha\beta)&=\c(\sigma,\alpha)\c(\sigma,\beta),\\
\c(\sigma,\sigma)&=1.
\end{align*}
Let $A$ be a $G$-graded $\kk$-algebra with decomposition
$A = \bigoplus_{\sigma \in G} A_\sigma$, and let $\c$ be an alternating bicharacter
defined on $G$.  We say that $A$ is \emph{$\c$-color commutative} (or simply
\emph{color commutative} if $\c$ is understood) if for every
$x \in A_\sigma$ and $y \in A_\tau$, one has $xy = \c(\sigma,\tau)yx$.
An element $x \in A_\sigma$ is said to be $G$-\emph{homogeneous}.
We call the $G$-degree of a $G$-homogeneous element $x$ the \emph{color} of $x$,
and we denote this by $\gdeg{x}$. We also refer to $G$ as the \emph{group of colors of $A$}. If $x$ and $y$ are $G$-homogeneous we abuse 
notation and use $\c(x,y)$ to denote $\c(\gdeg{x},\gdeg{y})$.

\subsection*{Color DG algebras}
Let $Q$ be a color commutative connected graded $\kk$-algebra and denote by $G$ its group of colors. Let $A$ be a DG $Q$-algebra.  We say that $A$ is a \emph{$\c$-color DG $Q$-algebra}
provided $A$ is $G$-graded with a grading compatible with the homological and internal grading of $A$,
and the differential on $A$ is $G$-homogeneous of color $e_G$.

We also assume that a color DG $Q$-algebra $A$ is \emph{graded color commutative}.  
That is, for all
$x,y \in A$, homogeneous with respect to all gradings, we assume that $xy = (-1)^{|x||y|}\c(x,y)yx$, and that $x^2 = 0$ when
$x$ is of odd homological degree.

\begin{chunk}  The color opposite of $A$ is the DG algebra $A^{\mathrm{op}}$ with the same underlying complex as $A$ and product given by
\[
a\cdot^{\mathrm{op}}b=(-1)^{|a||b|}\c(a,b)ba,
\]
where $a,b$ are elements of $A$ homogeneous with respect to all the gradings of $A$. By the color commutativity of $A$ it follows that $a\cdot^{\mathrm{op}}b=ab$, therefore $A^{\mathrm{op}}$ is just $A$.
The \emph{enveloping algebra} of $A$, denoted by $A^\e$, is the color DG $Q$-algebra with underlying complex $A\otimes_Q A$ and product given by
\[
(a\otimes b)(c\otimes d)=(-1)^{|b||c|}\c(b,c)ac\otimes bd.
\]
where $a,b,c,d$ are elements of $A$ homogeneous with respect to all the gradings of $A$.
\end{chunk}

\subsection*{Color DG modules}
Let $A$ be a graded color commutative DG $Q$-algebra, where $Q$ is a color commutative connected graded $\kk$-algebra. 
In this section we review conventions and terminology regarding color DG $A$-(bi)modules; see \cite[Section 4]{ColorDGA} for details.
 Throughout this article, a left (right) color DG $A$-module is a left (right) DG $A$-module equipped with a $G$-grading, where $G$ is the group of colors of $A$, compatible with respect to the homological and internal gradings.

\begin{chunk}\label{c:bimodulestructure}As usual a color DG $A$-bimodule is a left and right color DG $A$-module where the two actions are compatible; equivalently, it is a left color DG $A^\e$-module.  We say that 
a DG $A$-bimodule $M$ is \emph{symmetric} provided $am=(-1)^{|a||m|}\c(a,m)ma$
for all $m\in M,a\in A$ homogeneous with respect to all gradings.
Each right color DG $A$-module $M$ can be canonically made into a symmetric color bimodule by prescribing a left DG $A$-module with the following action:
\[
a\cdot m:=(-1)^{|a||m|}\c(a,m)ma
\]for each $a\in A$  and $m\in M$.  Similarly, we can switch from left color DG $A$-modules to right color DG $A$-modules according to the following rule: 
\[
m\cdot a:=(-1)^{|a||m|}\c(m,a)am
\]for each $a\in A$  and $m\in M$. Hence, each left color (or right) DG $A$-module can naturally be viewed as  a symmetric color DG $A$-bimodule according to this convention; therefore, we will not specify a side for color DG modules over color commutative algebras. 
\end{chunk}

\begin{chunk}\label{c:tensorproduct}
Let $M,N$ be color DG $A$-modules. The color DG $A$-modules $M\otimes_AN$ and $\Hom_A(M,N)$ are defined in the standard way, see \cite[Definition 4.6 and Definition 4.8]{ColorDGA}. The suspension of $M$, denoted by $\Sigma M$, is the color DG $A$-module with 
\[
(\mathsf{\Sigma} M)_i:=M_{i-1},  \ \  \del^{\mathsf{\Sigma}M}:=-\del^M, \   \text{ and }  \ a\cdot m:=(-1)^{|a|}am.
\] %Note that there is no need to adjust the colors as the differential of any color DG $A$-module is trivial. 

The homology of $M$, denoted by $\hh(M)$, is a color  $\hh(A)$-module over the color commutative $\hh_0(A)$-algebra $\hh(A).$ 
We say $M$ is  \emph{finite} provided that the color module $M^\natural$ is finitely generated over $A^\natural$ where $(-)^\natural$ is the forgetful functor to the category of color $A$-modules.  In particular, any bounded complex of finitely generated color $\hh_0(A)$-modules is a finite color DG $A$-module via restricting scalars along the augmentation $A\to \hh_0(A)$.
\end{chunk}

\begin{chunk}\label{c:enveloping}
 The \emph{diagonal} is the multiplication map 
$A^\e\to A$, which is a morphism of color DG $Q$-algebras. In the sequel, it is through the diagonal that $A$ will always be regarded as a color DG $A^\e$-module. 
\end{chunk}

%%%%%%%%%%%%%%%%%%%%%%%%%%%%%%%%%%%%%%%%%%%%%%%%%

%%%%%%%%%%%%%%%%%%%%%%%%%%%%%%%%%%%%%%%%%%%%%%%%%
\subsection*{Color DG homological algebra}

Let $A$ be a graded color commutative DG algebra.

\begin{chunk}
Let $F$ be a color DG $A$-module. We say that $F$ is \emph{semifree} if there exists a chain of color DG $A$-modules 
\[
0=F(-1)\subseteq F(0)\subseteq F(1)\subseteq F(2)\subseteq \ldots 
\] satisfying $\bigcup F(n)=F$ and for each $n\geq 0$ there is an isomorphism of color DG $A$-modules
\[
F(n)/F(n-1)\cong \bigoplus_{i\in \mathbb{Z}} \mathsf{\Sigma}^i A^{(X_{i,n})}
\] for some (possibly infinite) sets $X_{i,n}$. 
\end{chunk}

\begin{chunk}
\label{tensorSemifree} \label{homotopyEquiv}
A color DG $A$-module $P$ is said to be \emph{semiprojective} provided  $\Hom_A(P,-)$ preserves surjections and quasi-isomorphisms. 
Below are standard properties when $A$ has trivial color, so we leave the proofs to the  reader (cf. \cite[Chapter 6]{FHT} or \cite[Chapter 1]{IFR}).

From the definition it follows easily that a quasi-isomorphism between semiprojectives is in fact a homotopy equivalence. Also, any semifree DG $A$ module is semiprojective. Finally, if $P$ is semiprojective $P\otimes_A -$ preserves injections and quasi-isomorphisms. 
\end{chunk}
\begin{chunk}Let $P$ be a color DG $A$-module. We say that $P$ is \emph{perfect over Q} provided that $P$ is quasi-isomorphic to a bounded complex of finitely generated projective $Q$-modules; here we are regarding $P$ as a color DG $Q$-module via restriction of scalars along the structure map $Q\to A. $ We say $P$ is \emph{strongly perfect over Q} if it is itself a bounded complex of finitely generated projective $Q$-modules. 
\end{chunk}

\begin{chunk}\label{prop:DGModRes}

For each DG $A$-module $M$ there exists a surjective quasi-isomorphism of DG $A$-modules $F\xra{\simeq }M$ where $F$ is semifree. We call $F\xra{\simeq} M$ a \emph{semifree resolution of M over A}, and any two semifree resolutions of $M$ are homotopy equivalent. Furthermore, when $A$ is a nonnegatively graded DG algebra with $\hh(A)$ noetherian and $\hh(M)$  a noetherian  graded $\hh(A)$-module, there exists a semifree resolution $F\xra{\simeq}M$ such that
\[
F^\natural\cong \bigoplus_{j=i}^\infty \mathsf{\Sigma}^j (A^{\beta_j})^{\natural}
\] for a some fixed integer $i$ and nonnegative integers $\beta_j$ (see \cite[Proposition B.2]{AISS}).

 Mimicking the proofs in \cite[Proposition 6.6]{FHT} one can show the existence and uniqueness, up to homotopy, of semifree resolutons; one only need to mind the colors while adapting the arguments there.

Let $M,N$ be color DG $A$-modules, and let $F$ be a semifree resolution of $M$. It follows from the previous properties that the homology of $\Hom_A(F,N)$ and $F\otimes_AN$ does not depend on the choice of the resolution $F$. We define 
\[
\Ext_A(M,N):=\hh(\Hom_A(F,N)) \ \text{ and } \ \Tor^A(M,N)=\hh(F\otimes_A N)
\] for each color DG $A$-module $N$. These are naturally graded color modules over $\hh(A)$ (see \ref{c:tensorproduct} for the $\hh(A)$-structures).  
\end{chunk}

\begin{chunk}\label{adjoinvariables}
Let $A$ be a color DG algebra over a color commutative ring. Let $z$ be a cycle of $A$ homogeneous with respect to all gradings. There is a semifree extension of $A$, denoted by $\Ay{A}{y\mid \del y=z}$,   where $z$ becomes a boundary. This extension is constructed by  adjoining the variable  $y$ which is an skew exterior variable when $z$ has even homological degree or a skew divided power variable when $z$ has odd homological degree; in either case, the variable $y$ color commutes with $A$. It is straightforward to see that $A$ is a DG subalgebra of $\Ay{A}{y}$. Hence,  we can inductively adjoin variables to kill cycles:
\[
\Ay{A}{y_1,\ldots,y_m\mid \del y_i=z_i}:=\Ay{\Ay{A}{y_1,\ldots,y_{m-1}\mid \del y_i=z_i}}{y_m\mid \del y_m=z_m}.
\]
See \cite[Proposition 2.5 and Proposition 2.7]{ColorDGA} for further details.
\end{chunk}

The final two subsections provide context  for the  the calculations in \Cref{s:HH}, and explain the choice of terminology regarding the operators discussed in \Cref{section:severalactions}.

\subsection*{Braided tensor categories, general case}

In this subsection we recall some constructions from \cite{braided}.   %A \emph{monoidal category} is a category $\mathcal{C}$ equipped with a bifunctor $\otimes:\mathcal{C}\times\mathcal{C}\rightarrow\mathcal{C}$ satisfying a list of axioms that the interested reader can find in \cite[Section 2]{braided}. 
A monoidal category $(\mathcal{C},\otimes)$ is said to be \emph{braided} if for all objects $U$ and $V$ of  $\mathcal{C}$ there exist functorial isomorphisms
\[
R_{U,V}:U\otimes V\rightarrow V\otimes U,
\]
satisfying the \emph{hexagon axioms}, see \cite[Section 2]{braided}. If, furthermore, $R_{U,V}R_{V,U}=id_{V\otimes U}$ for all objects in $\mathcal{C}$, then $\mathcal{C}$ is called a \emph{symmetric monoidal category}.

In \cite{braided}, Baez defines braided tensor categories using commutative rings, we work with the more general notion of color commutative rings; the proofs of the statements below remain unchanged under this more general hypothesis. Given a color commutative ring $\mathbbm{F}$, we define a \emph{braided (symmetric) tensor category of} $\mathbbm{F}$-\emph{modules} to be a braided (symmetric) monoidal category $\mathcal{C}$ equipped with a faithful functor $F$ to the category of color $\mathbbm{F}$-modules, satisfying a list of axioms that the interested reader can find in \emph{loc. cit.}. Therefore for the remainder of this subsection, we will work in a fixed braided tensor category $\mathscr{V}$ of $\mathbbm{F}$-modules, where $\mathbbm{F}$ is some color commutative ring. As in \cite{braided}, we will identify objects and morphisms in $\mathscr{V}$ with their images under $F$.

Let $A$ be an algebra object in $\mathscr{V}$, with an associative multiplication given by $m_A$. Let $A^{\mathrm{op}}$ denote $A$ with the multiplication map $m_AR_{A,A}$. It is proved in \cite[Lemma 1]{braided}, that $A^{\mathrm{op}}$ is an algebra in $\mathscr{V}$. Let $B$ be another algebra in $\mathscr{V}$ with product $m_B$, then  $A\otimes B$ is an algebra in $\mathscr{V}$ with multiplication given by $(m_A\otimes m_B)(id_A\otimes R_{B,A}\otimes id_B)$ (cf.  \cite[Lemma 2]{braided}).  The enveloping algebra of $A$, denoted by $A^\e$, is $A\otimes A^{\mathrm{op}}$.

%If $A$ is an algebra in $\mathscr{V}$ satisfying $m_A=m_AR_{A,A}$, then Baez calls these type of algebras \emph{r-commutative}. We prefer to call these algebras \emph{braided commutative}, as it was done, for example, in \cite{negron}. We point out that if $A$ is braided commutative, then $A^{\op}$ coincides with $A$.

Assume that $\mathscr{V}$ is a symmetric tensor category. 
\begin{enumerate}
%\item \cite[Lemma 3]{braided} If $A$ and $B$ are algebra objects in $\mathscr{V}$ that are braided commutative, then so is their braided tensor product $A\otimes B$.
\item \cite[Lemma 4]{braided} If $A$ is an algebra object in $\mathscr{V}$, then $A$ is a left $A^\e$-module in $\mathscr{V}$ with action given by $m_A(id_A\otimes m_AR_{A,A})$.
\item \cite[Lemma 5]{braided} If $A$ is an algebra object in $\mathscr{V}$, then $A$ is a right $A^\e$-module in $\mathscr{V}$ with action given by $m_AR_{A,A}(m_A\otimes id_A)$.
\end{enumerate}
These facts are applied in \cite[Section 3]{braided} to define the braided Hochschild homology of an algebra object $A$ in $\mathscr{V}$, whenever $A$ is flat over $\mathbbm{F}$. We define a dual notion below. 
\begin{definition}Let $\mathbbm{F}$ be a color commutative ring, and let $\mathscr{V}$ be a symmetric tensor category of $\mathbbm{F}$-modules. Let $A$ be an algebra object in $\mathscr{V}$ which is projective over $\mathbbm{F}$. The \emph{braided Hochschild cohomology} of $A$ with respect to $\mathbb{F}$ is
\[
\mathrm{HH}(A|\mathbbm{F})=\Ext_{A^\e}(A,A),
\]
where the right-hand side is the homology of the Hom complex of right linear maps  $\mathrm{Hom}_{A^e}(B,B)$; here  $B$ is the bar resolution constructed in \cite[Theorem 1]{braided} and  $\Ext_{A^\e}(A,A)$ has the  composition product.
\end{definition}

\subsection*{Symmetric tensor categories, color commutative case}
Let $Q$ be a color commutative ring, and let $\mathcal{C}$ be the category of color DG $Q$-modules. The tensor product $\otimes_Q$, defined in \cite[Definition 4.6]{ColorDGA}, gives $\mathcal{C}$ the structure of a monoidal category. Let $\c$ be the bicharacter associated to $Q$. For every pair of objects $U,V\in\mathcal{C}$, we define maps $R_{U,V}:U\otimes_QV\rightarrow V\otimes_QU$ by
\[
u\otimes v\mapsto(-1)^{|u||v|}\c(u,v)v\otimes u,
\]
for all $u\in U$ and $v\in V$ homogeneous elements with respect to all gradings. These maps give $\mathcal{C}$ the structure of a symmetric monoidal category. The natural embedding of $\mathcal{C}$ into the category of color $Q$-modules gives $\mathcal{C}$ the structure of a symmetric tensor category, we denote it by $\mathscr{V}$.

Let $A$ be a color commutative DG $Q$-algebra in $\mathscr{V}$. The product on $A^{\mathrm{op}}$, defined by $m_AR_{A,A}$, simplifies to
\[
a\cdot^{\mathrm{op}}b=(-1)^{|a||b|}\c(a,b)ba=ab,
\]
where $a,b$ are homogeneous elements with respect to all the gradings.
This shows that $A^{\mathrm{op}}$ is isomorphic to $A$ as a color DG $Q$-algebra. %In other words, color commutative algebras are braided commutative.

Let $A$ and $B$ be color commutative algebras in $\mathscr{V}$. The multiplication on the tensor product $A\otimes B$ simplifies to
\[
(a\otimes b)(a'\otimes b')=(-1)^{|b||a'|}\c(b,a')(aa')\otimes(bb'),
\]
where $a,b,a',b'$ are homogeneous elements with respect to all the gradings.

The enveloping algebra $A^\e$ is the tensor product $A\otimes_QA^{\mathrm{op}}=A\otimes_QA$. It follows from \cite[Lemma 3]{braided}, or from a straightforward check, that $A^\e$ is a color commutative DG $Q$-algebra.

The left and right $A^\e$ action on $A$, given by \cite[Lemma 4 \& Lemma 5]{braided}, simplify to 
\[
(a\otimes b)\cdot c=(-1)^{|b||c|}\c(b,c) acb,\quad c\cdot(a\otimes b)=(-1)^{|a||c|}\c(c,a)acb,
\]
for $a,b,c\in A$, homogeneous elements with respect to all gradings.

Therefore,  the braided Hochschild cohomology of $A$ relative to  $Q$ is 
\[
\mathrm{HH}(A|Q)=\mathrm{Ext}_{A^\e}(A,A),
\]
provided that $A$ is semiprojective over $Q$.

If $Q$ is a skew polynomial ring and $R$ a quotient of $Q$ by an ideal generated by normal elements, then there exists a color DG $Q$-algebra resolution of $R$, obtained via the process of killing cycles illustrated in \cref{adjoinvariables}, see also \cite[Section 2]{ColorDGA}. We denote this DG algebra resolution by $E$. As in \cite[Remark 3.3]{shukla} we make the following
\begin{definition}\label{defderivedbraided}
Let $Q$ be a skew polynomial ring and $R$ a quotient of $Q$ by an ideal generated by normal elements. Let $E$ be a color DG $Q$-algebra resolution of $R$. The \emph{derived braided Hochschild cohomology} of $R$ relative to $Q$ is
\[
\dHH{R}{Q}=\hh\hh(E|Q).
\]
\end{definition}
Observe that
\[
\dHH{R}{Q}=\Ext_{R\otimes_Q^{\mathbf{L}}R}(R,R),
\]
%where $R\otimes_Q^\mathbf{L}R$ is the derived version of the braided tensor product, 
which justifies the use of the adjective \emph{derived} in the definition of $\dHH{R}{Q}$.
\begin{remark}
Braided Hochschild homology was first introduced in \cite{braided} by Baez.  Derived Hochschild cohomology was defined by MacLane in \cite{MacLane} for $\mathbb{Z}$-algebras and later generalized by Shukla in \cite{OriginalShukla} for algebras over general commutative noetherian rings. Quillen, in \cite{Quillen}, recognized it as a derived version of Hochschild cohomology; \cite{shukla} notes that derived Hochschild cohomology is also known as \emph{Shukla cohomology}. For a comparison of Hochschild cohomology and derived Hochschild cohomology see \cite{comparison}.
\end{remark}

%%%%%%%%%%%%%%%%%%%%%%%%%%%%%%%%%%%%%%%%%%%%%%%%%
\section{Semifree resolution of the diagonal}\label{s:semifreediagonal}

Throughout this paper we fix the following notation.  Let $Q=\kk_\mathfrak{q}[x_1,\ldots,x_n]$ be a skew polynomial ring, where $\kk$ is a field and $\mathfrak{q}=(q_{i,j})$ is a matrix with invertible entries such that $q_{i,j}=q_{j,i}^{-1}$ for all $i,j$ and $q_{i,i}=1$ for all $i$. The ring $Q$ can be regarded as a color commutative ring in the standard way, see \cite[Example 3.2]{ColorDGA}, denote by $G$ its group of colors. We fix  a sequence of normal elements  $\f=f_1,\ldots,f_c$  in  $(x_1,\ldots,x_n)^2$ and set  $R$ to be the quotient $R= Q/(\f)$
where we regard $R$ as a color commutative ring with the same group of colors as $Q$. 
Finally, let  \[E=\Ay{Q}{e_1,\ldots,e_c\mid\partial(e_i)=f_i}\] be the (skew) Koszul complex on $\f$ over $Q$ (cf. \Cref{adjoinvariables}).

\begin{chunk}Recall  from \ref{c:enveloping},  $E$ is regarded as a color DG $E^\e$-algebra via the diagonal $E^\e\to E$ given by 
\[a\otimes b\mapsto ab.\]
There is an isomorphism of color DG $Q$-algebras 
\[
E^\e\cong \Ay{Q}{e_1,\ldots,e_c,e_1',\ldots,e_c'\mid \partial(e_i)=\partial(e_i')=f_i},
\]
that identifies    $1\otimes e_i$ and $e_i\otimes 1$ with  $e_i$ and $e_i'$, respectively. Hence, we will freely identify these models without further mention in the rest of the article. 
\end{chunk}

\begin{construction}\label{semifreediagonalres}
Notice that the  elements $e_i'-e_i$ are $G$-homogeneous cycles in the color DG $Q$-algebra $E^\e$. Moreover, 
they pairwise skew commute and square to zero and so by \cite[Proposition 2.9]{ColorDGA}, they are killable. Therefore, we introduce a set of (skew) divided power variables 
$Y=\{y_1,\ldots, y_c\}$ of homological degree 2 and define
\[
\Ay{E^\e}{Y}:= \Ay{E^\e}{Y\mid\del y_i=e_i'-e_i}.
\] Note that $\Ay{E^\e}{Y}$ is a color DG $E^\e$-algebra and is equipped with a morphism of color DG $Q$-algebras 
$\epsilon: \Ay{E^\e}{Y}\to E$ given by 
\[
(a\otimes b)y_1^{(h_1)}y_2^{(h_2)}\ldots y_c^{(h_c)}\mapsto \begin{cases}
 0 & \text{if any }h_i>0\\
 ab & \text{otherwise}
\end{cases}.
\] Moreover, the morphism is compatible with the $E^\e$-actions meaning that we have the following commutative diagram of color DG $E^\e$-algebras where the unlabeled maps are the canonical ones
\begin{center}
\begin{tikzcd}
& \Ay{E^\e}{Y} \arrow[dr,"\epsilon"] & \\
E^\e \arrow{ur} \arrow{rr} & & E
\end{tikzcd}
\end{center}
\end{construction}

\begin{theorem}\label{prop:Eres}
The   morphism of DG $E^\e$-algebras $\epsilon: \Ay{E^\e}{Y}\to E$ defined in Construction \ref{semifreediagonalres} is a semifree resolution of $E$ over $E^\e$. 
\end{theorem}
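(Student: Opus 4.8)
The plan is to prove that $\epsilon$ is a semifree resolution by verifying the two requirements separately: that $\Ay{E^\e}{Y}$ is semifree over $E^\e$, and that $\epsilon$ is a quasi-isomorphism. The first point is essentially structural. By \Cref{adjoinvariables}, adjoining the divided power variables $y_1,\ldots,y_c$ to kill the cycles $e_i'-e_i$ produces a semifree extension of $E^\e$; since $\Ay{E^\e}{Y}$ is obtained by iterating this construction, it is semifree as a color DG $E^\e$-module, with the filtration coming from the (divided power) monomials in the $y_i$. So the content is entirely in showing $\epsilon$ is a quasi-isomorphism.

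For the quasi-isomorphism, I would exploit that everything is built by iterated adjunction of variables over the skew polynomial ring $Q$, so the whole complex has an explicit generators-and-relations description. Concretely, $\Ay{E^\e}{Y}$ is, as a color DG $Q$-algebra, a Koszul-type complex on the $e_i, e_i'$ (exterior in homological degree $1$) together with divided powers on the $y_i$ (homological degree $2$), with $\partial e_i = \partial e_i' = f_i$ and $\partial y_i = e_i' - e_i$. The cleanest route is to factor $\epsilon$ through an intermediate complex by first eliminating the variables $e_i'$ against the $y_i$. Since $\partial y_i = e_i' - e_i$, the pair $(e_i', y_i)$ forms a contractible acyclic piece: the subcomplex generated in each coordinate by $e_i'$ and $y_i$ is a ``Koszul--divided-power'' cancellation that is homotopy equivalent to $Q$ concentrated in degree $0$. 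I would make this precise by exhibiting, for a single index, a contracting homotopy (or a change of variables $e_i' \mapsto e_i' - e_i$ that decouples the $e_i'$ from the $e_i$ entirely), reducing $\Ay{E^\e}{Y}$ up to quasi-isomorphism to the subalgebra $\Ay{Q}{e_1,\ldots,e_c\mid \partial e_i = f_i} = E$, and checking that under this reduction $\epsilon$ becomes the identity on $E$.

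An alternative, and perhaps more robust, formulation of the same idea is a Künneth/filtration argument. Because the construction is multiplicative and the variables for distinct indices $i$ color-commute and involve disjoint cycles, the total complex factors as a tensor product over $Q$ of $c$ single-variable pieces $\Ay{Q}{e_i,e_i',y_i}$. It then suffices to treat $c=1$: show that $\Ay{Q}{e,e',y\mid \partial e=\partial e'=f,\ \partial y=e'-e\}$ has homology $Q/(f)$ concentrated in degree $0$, with $\epsilon$ inducing the identification. For the single-variable computation I would set up the spectral sequence of the $y$-adic (divided power) filtration, or directly write down the contracting homotopy $s$ with $s(e') = y$ extended as a derivation-like operator, and verify $\partial s + s \partial = \mathrm{id} - \iota\pi$ where $\pi$ projects onto the copy of $E$. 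Keeping track of the signs and color factors $\c(-,-)$ while checking that $s$ is a genuine homotopy of color DG $Q$-modules is the one place demanding care.

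The main obstacle I anticipate is precisely this bookkeeping in the color-graded, divided-power setting: the homotopy contracting $(e_i', y_i)$ must respect the divided power structure (so $s$ interacts with $y_i^{(h)}$, not just $y_i$), and the sign conventions from \ref{c:tensorproduct} together with the bicharacter $\c$ must be handled consistently so that $s$ is $Q$-linear and compatible with $\epsilon$. Once the single-index homotopy is correctly normalized, assembling the $c$ indices via the tensor-product decomposition and concluding that $\hh(\Ay{E^\e}{Y}) = \hh(E)$ with $\epsilon$ the comparison map should be routine.
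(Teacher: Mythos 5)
Your primary argument---semifreeness being structural, then the change of variables $e_i'\mapsto e_i'-e_i$ that decouples $\Ay{E^\e}{Y}$ as $E\otimes_Q Q\langle \bm{x},Y\mid \del x_i=0,\ \del y_i=x_i\rangle$, acyclicity of the second factor, and the observation that $\epsilon$ restricts to the identity on the copy of $E$---is exactly the paper's proof. The paper packages the change of variables as an explicit isomorphism $\varphi\colon E^\e\xra{\cong}Q\langle\bm{x}\rangle\otimes_Q E$, extended over $Y$ via \cite[Lemma 6.4]{ColorDGA}, and instead of writing a contracting homotopy it quotes \cite[Theorem 2.15]{ColorDGA} for the quasi-isomorphism $Q\langle \bm{x},Y\mid \del y_i=x_i\rangle\xra{\simeq}Q$ and then tensors with $E$; this is legitimate because that map is a quasi-isomorphism of semifree DG $Q$-modules and $E$ is semifree over $Q$ (cf.\ \ref{tensorSemifree}), and it spares you all of the color/divided-power bookkeeping you flag as the main obstacle.

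The genuine problem is in your ``alternative, perhaps more robust'' K\"unneth formulation, which is the one step that would actually fail. The theorem does not assume $\f$ is a regular sequence---regularity first enters in \Cref{t:2}---so it does \emph{not} suffice to show each single-variable piece has homology $Q/(f_i)$ concentrated in degree $0$ and then assemble: the modules $Q/(f_i)$ are not flat over $Q$, so the homology of the tensor product of the $c$ pieces is not the tensor product of their homologies (Tor corrections appear), and indeed $\hh(E)$ itself fails to be $Q/(\f)$ when the sequence is not regular. Taken literally, your assembly would ``prove'' that $\hh(\Ay{E^\e}{Y})\cong Q/(\f)$ is concentrated in degree $0$, which is false in the stated generality even though the theorem remains true. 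The repair is to phrase the single-index statement as a quasi-isomorphism rather than a homology computation---the inclusion $\Ay{Q}{e_i\mid \del e_i=f_i}\hookrightarrow \Ay{Q}{e_i,e_i',y_i\mid \del e_i=\del e_i'=f_i,\ \del y_i=e_i'-e_i}$ is a quasi-isomorphism of semifree $Q$-complexes---and then tensor these quasi-isomorphisms over $Q$; but that is precisely your first route again. A related small point: a contracting homotopy $s$ with $\del s+s\del=\id-\iota\pi$ can never be extended ``as a derivation,'' since the left-hand side would then be a derivation while $\id-\iota\pi$ is not; avoiding the need for any such global homotopy is exactly what the tensor argument (and the paper's citation of \cite[Theorem 2.15]{ColorDGA}) accomplishes.
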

\begin{proof}Clearly $E^\e\langle Y\rangle$ is a semifree DG $E^\e$-algebra. So it suffices to check that  $\epsilon$ is a quasi-isomorphism. 

First, let $\bm{x}=x_1,\ldots,x_c$ be (skew) exterior variables of homological degree 1 and such that $\gdeg{x_i}=\gdeg{e_i}$. Set \[Q\langle \bm{x}\rangle=Q\langle \bm{x} \mid \del x_i=0\rangle;\] that is, the (skew) exterior algebra on $\bigoplus_{i=1}^cQx_i$ and let $\pi: Q\langle \bm{x}\rangle\to Q$ be the canonical augmentation map. There is an evident isomorphism of color DG $Q$-algebras
$\varphi: E^\e\to Q\langle \bm{x}\rangle\otimes_Q E$ determined by 
\[
e_i\otimes 1\mapsto  1\otimes e_i  \text{ and }1\otimes e_i\mapsto 1\otimes e_i-x_i\otimes 1.
\] Furthermore $\varphi$ is compatible with the augmentation maps to $E$, meaning the following diagram of color DG $Q$-algebras commutes
\begin{center}
\begin{tikzcd}
E^\e \arrow[d] \arrow[r,"\varphi"]&  Q\langle \bm{x}\rangle\otimes_Q E\arrow[d,"\pi\otimes1"] \\
E  \arrow[r,"="] &  E.
\end{tikzcd}
\end{center} By \cite[Lemma 6.4]{ColorDGA},  $\varphi$ extends to an morphism of color DG $Q$-algebras  \[
\tilde{\varphi}: E^\e\langle Y\rangle \to (Q\langle \bm{x}\rangle\otimes_Q E)\langle Y\mid \del y_i=\varphi(e_i\otimes1-1\otimes e_i)\rangle;
\] as $\tilde{\varphi}$ maps a graded $Q$-basis of $E^\e$ to a graded $Q$-basis of $Q\langle \bm{x}\rangle\otimes_Q E$, we conclude that $\tilde{\varphi}$ is an isomorphism of color DG $Q$-algebras. Now note $\varphi(e_i\otimes1-1\otimes e_i)=x_i\otimes 1$ and hence, there is an isomorphism $\psi$
\[
(Q\langle \bm{x}\rangle\otimes_Q E)\langle Y\mid \del y_i=\varphi(e_i\otimes1-1\otimes e_i)\rangle\cong E\otimes _Q Q\langle \bm{x}, Y\mid \del x_i=0, \ \del y_i=x_i\rangle
\] as augmented, to $E$, DG $Q$-algebras. Finally, it follows from \cite[Theorem 2.15]{ColorDGA} that \[Q\langle \bm{x}, Y\mid \del x_i=0, \ \del y_i=x_i\rangle\xra{\simeq} Q
\] and so the commutative diagram
\begin{center}
\begin{tikzcd}
E^\e \langle Y\rangle\arrow[d] \arrow[r,"\psi \tilde{\varphi}"]& E\otimes_Q Q\langle \bm{x},Y\rangle\arrow[d,"\simeq"] \\
E  \arrow[r,"="] &  E
\end{tikzcd}
\end{center} completes the proof. 
\end{proof}
 
\begin{notation}
For the rest of the paper $\Ay{E^\e}{Y}$ will denote the resolution of $E$ from \cref{semifreediagonalres}.
\end{notation}

%%%%%%%%%%%%%%%%%%%%%%%%%%%%%%%%%%%%%%%%%%%%%%%%%%
\section{Derived braided Hochschild cohomology}\label{s:HH}

Directly from \Cref{prop:Eres},
\[
\Tor^{E^\e}(E,E)\cong \hh(\Ay{E^\e}{Y}\otimes_{E^\e}E).
\] It is clear that there is an isomorphism of color DG $Q$-algebras
\[
\Ay{E^\e}{Y}\otimes_{E^\e}E\cong E\langle Y\mid \del y_i=0\rangle.
\] Therefore, the \emph{braided Hochschild homology} of $E$ over $Q$ is the graded color $Q$-algebra 
\[
\hh(E)\otimes_Q Q\langle Y\rangle. 
\]
This section is devoted to the calculation of  Hochschild \emph{co}homology, $\hh\hh(E|Q)=\Ext_{E^\e}(E,E)$,  as a graded color $Q$-algebra.

\begin{chunk} \label{LDGlie}
 Let $A$ be a DG $R$-algebra and let $\Ay{A}{Y}$ be a semifree extension of $A$. We let
 $D:=\Der_A(\Ay{A}{Y},\Ay{A}{Y})$ denote the subset of $\Hom_A(\Ay{A}{Y},\Ay{A}{Y})$ consisting of $A$-linear color derivations, see \cite[Definition 5.1]{ColorDGA}. It is straightforward to check that $D$ is a subcomplex of $\Hom_A(\Ay{A}{Y},\Ay{A}{Y})$. Equipping it with the following bracket and squaring operations 
\[
[\theta,\xi]=\theta\xi-(-1)^{|\theta||\xi|}\c(\theta,\xi)\xi\theta
\]
\[
\zeta^{[2]}=\zeta^2,
\]
where $\theta,\xi,\zeta\in\Der_A(\Ay{A}{Y},\Ay{A}{Y})$ are homogeneous with respect to all the gradings and $\zeta$ has odd homological degree,  makes $D$ a color DG Lie $A^\natural$-algebra, see \cite[Definition 7.1 and 7.9]{ColorDGA} for the definition of color DG Lie algebra. The proof is essentially contained in \cite[Lemma 7.10]{ColorDGA}, where it is proved when $A=R$.  The same proof works  in this more general case; we show that the bracket is $A^\natural$-bilinear. Let $a\in A,\theta,\xi\in\Der_A(\Ay{A}{Y},\Ay{A}{Y})$ be homogeneous with respect to all the gradings, then
\begin{align*}
[a\theta,\xi]&=a\theta\xi-(-1)^{|a\theta||\xi|}\c(a\theta,\xi)\xi(a\theta)\\
&=a\theta\xi-(-1)^{|a||\xi|+|\theta||\xi|}\c(a,\xi)\c(\theta,\xi)(-1)^{|\xi||a|}\c(\xi,a)a\xi\theta\\
&=a[\theta,\xi].
\end{align*}
The remaining checks are similar.
\end{chunk}

\begin{chunk} Let $A$ be a color DG algebra. Let $g_1,\ldots, g_c$ be elements of the group of colors of $A$, let $n_1,\ldots,n_c$ be integers, and let $z_1,\ldots, z_c$ be indeterminates, we denote by
\[
A[z_1,\ldots,z_c\mid \gdeg{z_i}=g_i, |z_i|=n_i, i=1,\ldots,c]
\]
the Ore extension obtained by adding the indeterminates $z_1,\ldots, z_c$ to $A$, satisfying the following commuting relations:
\[
z_is=(-1)^{n_i|s|}\c(g_i,s)sz_i,\quad\mathrm{for\;all}\;s\in A,
\]
\[
z_iz_j=(-1)^{n_in_j}\c(g_i,g_j)z_jz_i,\quad\mathrm{for\;all}\;i,j=1,\ldots,c,
\]
where $s$ is homogeneous with respect to all the gradings of $A$.
\end{chunk}

We use $\Ay{E^\e}{Y}$ to compute the   derived braided Hochschild cohomology of $E$ relative to $Q$ (cf. \Cref{defderivedbraided}).

\begin{theorem}
There is an isomorphism of graded color $\hh(E^\e)$-algebras  
\[
\hh\hh(E|Q)\cong  \hh(E) [\chi_1,\ldots,\chi_c\mid\gdeg{\chi_i}=\gdeg{f_i}^{-1},|\chi_i|=2,i=1,\ldots,c].
\]
Moreover the variables $\chi_1,\ldots,\chi_c$ correspond to the homology classes of derivations $\theta_1,\ldots,\theta_c\in\Der_{E^\e}(\Ay{E^\e}{Y},\Ay{E^\e}{Y})$, satisfying $\theta_i(y_j)=\delta_{i,j}$.
\label{theoremhhcomputed}
\end{theorem}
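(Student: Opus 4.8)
The plan is to use the semifree resolution $F:=\Ay{E^\e}{Y}$ of $E$ over $E^\e$ from \Cref{prop:Eres}, so that by \ref{prop:DGModRes} we have $\hh\hh(E|Q)=\Ext_{E^\e}(E,E)=\hh(\Hom_{E^\e}(F,E))$. As a graded $E^\e$-module, $F^\natural\cong E^\e\otimes_Q\Gamma_Q(Y)$ is free on the (skew) divided power monomials $y^{\mathbf a}=y_1^{(a_1)}\cdots y_c^{(a_c)}$, where $\Gamma_Q(Y)$ is the divided power $Q$-algebra on $Y$, which is degreewise finite free over $Q$. The extension-restriction adjunction along $Q\to E^\e$ then identifies $\Hom_{E^\e}(F,E)\cong\Hom_Q(\Gamma_Q(Y),E)\cong E\otimes_Q\Gamma_Q(Y)^\vee$, where $(-)^\vee$ is the graded color $Q$-dual. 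Since the $y_i$ color commute and have even homological degree, $\Gamma_Q(Y)^\vee$ is the (skew) polynomial $Q$-algebra $Q[\chi_1,\dots,\chi_c]$ with $\chi_i$ dual to $y_i$; as the pairing lands in the trivially colored $Q$ we read off $|\chi_i|=2$ and $\gdeg{\chi_i}=\gdeg{y_i}^{-1}=\gdeg{f_i}^{-1}$, the last equality because $\partial$ is color preserving and $\partial y_i=e_i'-e_i$ has color $\gdeg{f_i}$.

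Next I would compute the transported differential by evaluating cocycles on the $Q$-basis $\{1\otimes y^{\mathbf a}\}$ of $F$. On such elements $\partial_F$ reduces to the divided power differential $\partial y_i=e_i'-e_i$, and the crucial observation is that each $e_i'-e_i$ acts as zero on $E$ through the diagonal $E^\e\to E$, since it is sent to $e_i-e_i=0$. Hence under the identification above the differential on $E\otimes_Q Q[\chi_1,\dots,\chi_c]$ is exactly $\partial_E\otimes 1$. As $Q[\chi_1,\dots,\chi_c]$ is $Q$-free with zero differential, the K\"unneth isomorphism yields $\hh\hh(E|Q)\cong\hh(E)\otimes_Q Q[\chi_1,\dots,\chi_c]=\hh(E)[\chi_1,\dots,\chi_c]$ as graded color $\hh(E^\e)$-modules, the action factoring through the diagonal $\hh(E^\e)\to\hh(E)$.

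To pin down the multiplicative structure and prove the final assertion I would pass to the DG endomorphism algebra. Since $F$ is semiprojective and $\epsilon$ is a quasi-isomorphism, $\epsilon_*\colon\Hom_{E^\e}(F,F)\xra{\simeq}\Hom_{E^\e}(F,E)$ (see \ref{homotopyEquiv}), and the composition product on $\Ext_{E^\e}(E,E)$ is computed by composition in $\Hom_{E^\e}(F,F)$. The derivation $\theta_i\in D=\Der_{E^\e}(F,F)$ with $\theta_i(y_j)=\delta_{i,j}$ satisfies $[\partial_F,\theta_i]=0$ by a direct check on generators, so it is a cocycle, and $\epsilon\theta_i$ is the functional pairing to $1$ with $y_i$ and to $0$ with every other divided power monomial; thus $[\epsilon\theta_i]=\chi_i$, and more generally $\epsilon\theta^{\mathbf a}$ with $\theta^{\mathbf a}=\theta_1^{a_1}\cdots\theta_c^{a_c}$ is dual to $y^{\mathbf a}$, so $[\epsilon\theta^{\mathbf a}]=\chi^{\mathbf a}$ recovers the $\hh(E)$-basis found above. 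Because $D$ is a color DG Lie algebra (\ref{LDGlie}) and $[\theta_i,\theta_j]$ is the derivation with $[\theta_i,\theta_j](y_k)=0$, we get $[\theta_i,\theta_j]=0$, i.e. $\theta_i\theta_j=\c(\gdeg{f_i},\gdeg{f_j})\theta_j\theta_i$, which on homology is the relation $\chi_i\chi_j=(-1)^{|\chi_i||\chi_j|}\c(\gdeg{\chi_i},\gdeg{\chi_j})\chi_j\chi_i$. For a scalar $s\in\hh(E)$, represented by left multiplication $\mu_u$ with $u\in E^\e$ a cocycle lifting $s$ along the diagonal, the color Leibniz rule for $\theta_i$ (which vanishes on $E^\e$) gives $\theta_i\mu_u=(-1)^{|\theta_i||u|}\c(\gdeg{f_i}^{-1},s)\mu_u\theta_i$, hence $\chi_i s=(-1)^{2|s|}\c(\gdeg{\chi_i},s)s\chi_i$. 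Together with the module computation this exhibits $\hh\hh(E|Q)$ as freely generated over $\hh(E)$ by the $\chi_i$ subject to exactly the two Ore relations, giving the asserted isomorphism of graded color $\hh(E^\e)$-algebras.

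The main difficulty is the sign and bicharacter bookkeeping. I must confirm the two crucial simplifications, namely that $\partial y_i\mapsto 0$ under the diagonal genuinely trivializes the $\chi$-directions of the differential, and that the composition powers $\theta^{\mathbf a}$ dualize the monomials $y^{\mathbf a}$ with no lower order corrections, and then carry every Koszul sign $(-1)^{|\cdot||\cdot|}$ and bicharacter factor $\c(-,-)$ consistently through the divided power/polynomial duality, the color Lie bracket, and the color Leibniz rule, so that both families of relations land precisely in the Ore form with $\gdeg{\chi_i}=\gdeg{f_i}^{-1}$ and $|\chi_i|=2$.
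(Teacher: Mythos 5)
Your proposal is correct in outline, and it shares the paper's core ingredients --- the resolution $F=\Ay{E^\e}{Y}$, the quasi-isomorphism $\epsilon_*\colon \Hom_{E^\e}(F,F)\xra{\simeq}\Hom_{E^\e}(F,E)$, the identification of the graded $Q$-dual of the divided power algebra with a color polynomial ring, and the derivations $\theta_i$ --- but it finishes by a genuinely different mechanism. The paper organizes the computation into a diagram that also tracks the derivation complex $\Der_{E^\e}(\Ay{E^\e}{Y},\Ay{E^\e}{Y})$ through modules of differentials; it then sets $L$ to be the homology of that derivation complex, identifies $L$ with the free $\hh(E)$-module on the classes of the $\theta_i$, and gets the algebra structure abstractly: the universal enveloping algebra $UL$ is $\hh(E)[\chi_1,\ldots,\chi_c]$, the universal extension $\iota'\colon UL\to \hh\hh(E|Q)$ is surjective by the module-level computation, and injectivity follows from a noetherian rank count (a degreewise surjection of free $\hh(E)$-modules of equal finite rank is injective). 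You instead verify the algebra structure by hand: the composition monomials $\epsilon\theta^{\mathbf a}$ vanish on every divided-power monomial except $y^{\mathbf a}$, so they reproduce the dual basis found in the module computation, and the Ore relations follow from $[\theta_i,\theta_j]=0$ together with the color $E^\e$-linearity of $\theta_i$. Your route is more elementary and self-contained (no color Lie algebra or universal enveloping formalism, no rank argument), at the price of exactly the bookkeeping you flag; the paper's route outsources that bookkeeping to the universal property of $UL$. Two small corrections to your write-up, neither of which affects validity: first, $\epsilon\theta^{\mathbf a}$ is dual to $y^{\mathbf a}$ only up to a unit of $\kk^*$, coming from the bicharacter factors $\c(\theta_i,y_j)$ accumulated when a derivation passes the other divided power variables --- this is harmless, since a basis rescaled by units is still a basis, but the literal equality $[\epsilon\theta^{\mathbf a}]=\chi^{\mathbf a}$ as you state it need not hold; second, the commutation $\theta_i\mu_u=(-1)^{|\theta_i||u|}\c(\theta_i,u)\mu_u\theta_i$ is not an instance of the Leibniz rule but simply the color $E^\e$-linearity of the derivation $\theta_i$, which is what membership in $\Der_{E^\e}(\Ay{E^\e}{Y},\Ay{E^\e}{Y})$ means.
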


\begin{proof}

We consider the following diagram 
\begin{center}
\begin{tikzpicture}[baseline=(current  bounding  box.center)]
 \matrix (m) [matrix of math nodes,row sep=3em,column sep=4em,minimum width=2em] {
\Der_{E^\e}(\Ay{E^\e}{Y},\Ay{E^\e}{Y})&\Hom_{E^\e}(\Ay{E^\e}{Y},\Ay{E^\e}{Y})\\
\Der_{E^\e}(\Ay{E^\e}{Y},E)&\Hom_{E^\e}(\Ay{E^\e}{Y},E)\\
\Hom_{\Ay{E^\e}{Y}}(\mathrm{Diff}_{E^\e}\;\Ay{E^\e}{Y},E)&\Hom_E(\Ay{E^\e}{Y}\otimes_{E^\e}E,E)\\
\Hom_{E}((\mathrm{Diff}_{E^\e}\;\Ay{E^\e}{Y})\otimes_{\Ay{E^\e}{Y}}E,E)&\\
\Hom_E(EY,E)&\Hom_E(\Ay{E}{Y\mid \partial y_i=0\;\forall i},E)\\
E\chi_1\oplus\cdots\oplus E\chi_c&E[\chi_1,\ldots,\chi_c\mid\gdeg{\chi_i}=\gdeg{f_i}^{-1},|\chi_i|=2]\\};
\path[->] (m-1-1) edge (m-1-2);
\path[->] (m-2-1) edge  (m-2-2);
\path[->] (m-1-1) edge node[right] {\cite[Corollary 5.3]{ColorDGA}} (m-2-1);
\path[->] (m-1-2) edge node[left]{\cite[Proposition 4.12]{ColorDGA}}(m-2-2);
\path[->] (m-2-1) edge node[right]{\cite[Proposition 5.2]{ColorDGA}} (m-3-1);
\path[->] (m-2-2) edge node[left]{\cite[Proposition 4.9]{ColorDGA}} (m-3-2);
\path[->] (m-3-2) edge (m-5-2);
\path[->] (m-3-1) edge node[right]{\cite[Proposition 4.9]{ColorDGA}} (m-4-1);
\path[->] (m-4-1) edge (m-5-1);
\path[->] (m-6-1) edge (m-6-2);
\path[->] (m-5-1) edge (m-6-1);
\path[->] (m-5-2) edge (m-6-2);
\end{tikzpicture}
\end{center}
where the horizontal maps are inclusions, the vertical maps are quasi-isomorphisms, $E\chi_1\oplus\cdots\oplus E\chi_c$ is the free DG $E$-module with basis $\chi_1,\ldots,\chi_c$ and such that the differential of the elements $\chi_1,\ldots,\chi_c$ is zero and the differential on the indeterminates of  $E[\chi_1,\ldots,\chi_c\mid\gdeg{\chi_i}=\gdeg{f_i}^{-1},|\chi_i|=2]$ is trivial. The isomorphism $(\mathrm{Diff}_{E^\e}\;\Ay{E^\e}{Y})\otimes_{\Ay{E^\e}{Y}}E\cong EY$, where $EY$ is the free $E$-module with basis $Y$ and trivial differential on the elements of $Y$,  follows directly from the construction of the module of differentials. The isomorphism $\Ay{E^\e}{Y}\otimes_{E^\e}E\cong\Ay{E}{Y\mid\partial y_i=0\;\forall i}$ follows from the construction of the resolution $\Ay{E^\e}{Y}$. The bottom left vertical map is the map $\theta\mapsto\sum_{i=1}^c\theta(y_i)\chi_i$ and similarly for the bottom right vertical map. It is straightforward to check that this is a commutative diagram and that $\chi_i\in E\chi_1\oplus\cdots\oplus E\chi_c$ corresponds to the derivation $\theta\in\Der_{E^\e}(\Ay{E^\e}{Y},\Ay{E^\e}{Y})$ such that $\theta(y_j)=\delta_{i,j}$.

For the remainder of the proof  $L$ will denote the homology of the complex $\Der_{E^\e}(\Ay{E^\e}{Y},\Ay{E^\e}{Y})$; from \Cref{LDGlie} it follows that $L$ is a graded color Lie $\hh(E^\e)$-algebra. The map induced in homology by the bottom map is the inclusion of $\hh(E^\e)$-modules $\hh(E)\chi_1\oplus\cdots\oplus \hh(E)\chi_c\rightarrow \hh(E)[\chi_1,\ldots,\chi_c\mid\gdeg{\chi_i}=\gdeg{f_i}^{-1},|\chi_i|=2]$. Therefore the top map induces, in homology, an injection of graded color Lie $\hh(E^\e)$-algebras $\iota: L\rightarrow\mathrm{Lie}(\hh\hh(E|Q))$. Let $\iota': UL\rightarrow\hh\hh(E|Q)$ be the universal extension of $\iota$, where $UL$ is the universal enveloping algebra of $L$ (see \cite[Definition 7.6 and Remark 7.7]{ColorDGA}), we claim that $\iota'$ is an isomorphism of associative $\hh(E^\e)$-algebras. Indeed, since $L=\hh(E)\chi_1\oplus\cdots\oplus \hh(E)\chi_c$ and $\gdeg{\chi_i}=\gdeg{f_i}^{-1}$ for all $i$, it follows that $UL\cong \hh(E)[\chi_1,\ldots,\chi_c\mid\gdeg{\chi_i}=\gdeg{f_i}^{-1},|\chi_i|=2]$. The computations performed at the beginning of this proof show that $\hh\hh(E|Q)$ is isomorphic to $\hh(E)[\chi_1,\ldots,\chi_c\mid\gdeg{\chi_i}=\gdeg{f_i}^{-1},|\chi_i|=2]$ as a $\hh(E)$-module, and they also show that the map $\iota'$ is surjective. In each homological degree $\iota'$ is a surjective map of free $\hh(E)$-modules of the same rank, and since $\hh(E)$ is noetherian it follows that $\iota'$ must be injective in each homological degree, and therefore it is itself injective.
\end{proof}

\begin{corollary}
When $\f$ is a regular sequence, there is an isomorphism of graded color $R$-algebras   \[\dHH{R}{Q}\cong R[\chi_1,\ldots,\chi_c\mid\gdeg{\chi_i}=\gdeg{f_i}^{-1},|\chi_i|=2,i=1,\ldots,c].\] 
\label{t:2}
\end{corollary}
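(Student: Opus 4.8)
The plan is to deduce the corollary from \Cref{theoremhhcomputed} by observing that, under the regularity hypothesis, the skew Koszul complex $E$ is itself a resolution of $R$, so that the abstract derived invariant $\dHH{R}{Q}$ is computed directly by $\hh\hh(E|Q)$.

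First I would verify that when $\f=f_1,\ldots,f_c$ is a regular sequence of normal elements, the skew Koszul complex $E=\Ay{Q}{e_1,\ldots,e_c\mid \del e_i=f_i}$ is acyclic in positive homological degree, so that the augmentation $E\xra{\simeq}R$ is a quasi-isomorphism and $\hh(E)\cong R$ is concentrated in degree zero. This is the color/skew analogue of the classical fact that the Koszul complex on a regular sequence resolves the quotient; one argues by induction on $c$, using at each stage that $f_i$ is a normal non-zero-divisor modulo $(f_1,\ldots,f_{i-1})$ so that adjoining the exterior variable $e_i$ kills exactly the homology class it is meant to (cf.\ the killing-cycles machinery recalled in \Cref{adjoinvariables}). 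In particular $E\xra{\simeq}R$ is a color DG $Q$-algebra resolution of $R$ in the sense required by \Cref{defderivedbraided}.

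With this in hand the result is immediate. By \Cref{defderivedbraided} we have $\dHH{R}{Q}=\hh\hh(E|Q)$, and \Cref{theoremhhcomputed} identifies the latter with $\hh(E)[\chi_1,\ldots,\chi_c\mid\gdeg{\chi_i}=\gdeg{f_i}^{-1},|\chi_i|=2]$ as graded color $\hh(E^\e)$-algebras. Substituting $\hh(E)\cong R$ yields the claimed Ore extension $R[\chi_1,\ldots,\chi_c\mid\gdeg{\chi_i}=\gdeg{f_i}^{-1},|\chi_i|=2]$, with the variables still realized by the classes of the derivations $\theta_i$ satisfying $\theta_i(y_j)=\delta_{i,j}$. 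To record this as an isomorphism of color $R$-algebras rather than merely of $\hh(E^\e)$-algebras, I would note that the $\hh(E^\e)$-action on $\dHH{R}{Q}=\Ext_{E^\e}(E,E)$ is induced by the $E^\e$-module structure on $E$, which is defined through the diagonal $E^\e\to E$; hence the action factors through $\hh(E^\e)\to\hh(E)\cong R$, and the $\hh(E^\e)$-structure is precisely the $R$-structure.

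The only step requiring genuine care is the acyclicity of $E$, i.e.\ confirming that the classical regular-sequence argument goes through for a sequence of \emph{normal} (rather than central) elements in a skew polynomial ring; this is exactly the content of the hypothesis that $\f$ is a regular sequence. Everything else is a direct substitution into \Cref{theoremhhcomputed} together with the observation that the enveloping-algebra action descends along the diagonal.
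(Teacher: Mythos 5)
Your proposal is correct and takes essentially the same route as the paper: the paper's proof is the one-line observation that, once $\f$ is regular, $\hh\hh(E|Q)=\dHH{R}{Q}$ and $\hh(E)=R$, so \Cref{theoremhhcomputed} specializes immediately, and the isomorphism of $\hh(E^\e)=\Tor^Q(R,R)$-algebras is in particular one of $R$-algebras. You merely make explicit two points the paper leaves implicit --- the acyclicity of the skew Koszul complex on a regular sequence of normal elements, and the descent of the $\hh(E^\e)$-action along the diagonal to the $R$-action --- and both of your justifications are sound.
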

\begin{proof}
This follows from the previous theorem since
\[
\hh\hh(E|Q)=\dHH{R}{Q} \ \text{ and } \  \hh(E)=R;
\]
therefore the isomorphism claimed in the Corollary is one of $\hh(E^\e)=\mathrm{Tor}^Q(R,R)$-algebras, and therefore one of $R$-algebras.
\end{proof}

\begin{remark}
As a consequence of Theorem \ref{theoremhhcomputed}, \[\c(\chi_i,-)=\c(y_i,-)^{-1}=\c(f_i,-)^{-1}.\] In the sequel we will make use of this fact in Section \ref{s:extmodules} to define a color DG $S$-module whose homology computes $\Ext$ over a skew complete intersection where 
\[
S=Q[\chi_1,\ldots,\chi_c\mid\gdeg{\chi_i}=\gdeg{f_i}^{-1},|\chi_i|=2,i=1,\ldots,c].
\] 
\end{remark}

\section{Products on the derived braided Hochschild cohomology}\label{section:severalactions}

In this section we clarify that for a skew complete intersection $R=Q/(\f)$ the composition action of  $\dHH{R}{Q}$ on $\Ext_R(M,N)$ can be computed at the chain level with the so-called ``cup product," see \Cref{cupdef}. The first action establishes properties expected from a well-posed support theory, see \Cref{p:basicprops}; the latter allows us to prove the intersection formula in  \Cref{t:symmetry}.

We fix the following notation. Let $A$ be a color DG $Q$-algebra. Let  $\varepsilon: P\xra{\simeq} A $ be a semiprojective resolution of $A$ over $A^\e$. 

Recall that for any color DG $A^\e$-module $X$,  $\Hom_{A^\e}(P,X)$ is naturally a right DG $\Hom_{A^\e}(P,P)$-module via right composition of maps. This action induces a graded right $\HH(A|Q)$-module structure on $\Ext_{A^\e}(A,X)$. First, we define a different right DG $\Hom_{A^\e}(P,P)$-module structure on $\Hom_{A^\e}(P,X)$, provided that $P$ admits a diagonal approximation; furthermore, we show that these two structures are the same in homology. This will be a slight generalization of \cite[Proposition 4.5]{BR}. Namely, we adapt \cite[Proposition 4.5]{BR} to the color setting, while also discussing  actions on certain Ext-modules.

\begin{definition}
 A \emph{co-unital diagonal approximation} for $\varepsilon: P\xra{\simeq} A$ is a morphism $\Phi:P\rightarrow P\otimes_A P$ of left color DG $A^\e$-modules so that 
\begin{center}
\begin{tikzcd}
            & P \arrow[ld, Rightarrow, no head] \arrow[rd, Rightarrow, no head] \arrow[d, "\Phi"]       &             \\
P & P\otimes_AP \arrow[l, "\varepsilon\otimes 1"] \arrow[r, "1\otimes\varepsilon"'] & P
\end{tikzcd}
\end{center}
\noindent
commutes, where we have identified $P\otimes_AA\cong P\cong A\otimes_AP$ with the appropriate multiplication maps. 
\end{definition}

\begin{definition}\label{cupdef}
Given a co-unital diagonal approximation $\Phi:P\rightarrow P\otimes_AP$ for $\epsilon$ we define a right \emph{cup action} $\cup_\Phi$ (or simply $\cup$) of $\Hom_{A^\e}(P,P)$ on $\Hom_{A^\e}(P,X)$ as
\[
\sigma\cup\tau=\mu(\sigma\otimes(\varepsilon\tau))\Phi:P\xra{\Phi}P\otimes_AP\xra{\sigma\otimes(\varepsilon\tau)}X\otimes_AA\xra{\mu}X,
\]
where $\sigma\in\Hom_{A^\e}(P,X),\tau\in\Hom_{A^\e}(P,P)$, and $\mu$ is the right multiplication induced from the $A^\e$-module structure of $X$.
\end{definition}

\begin{chunk}\label{augmentedcup}
Since $P$ is semiprojective, the surjective quasi-isomorphism $\varepsilon: P\xra{\simeq} A$  induces a surjective quasi-isomorphism 
\[
\varepsilon_*: \Hom_{A^\e}(P,P)\xra{\simeq}\Hom_{A}(P,A),
\]
 and so $\varepsilon_*$ prescribes a right action of $\Hom_{A^\e}(P,A)$ on $\Hom_{A^\e}(P,X)$. Indeed, for $\sigma\in \Hom_{A^\e}(P,X)$ and $\tau\in \Hom_{A^\e}(P,A)$ we first lift $\tau$ to $\tilde{\tau}$ in $\Hom_{A^\e}(P,P)$ satisfying $\varepsilon \tilde{\tau}=\tau.$ Now we define 
\[
\sigma\cdot\tau:=\sigma \cup \tilde{\tau}.
\] From the definition of $\cup$, it follows immediately that this action is independent of the choice of lifting $\tilde{\tau}$. 

%Moreover, for $\gamma \in \Hom_{A^\e}(P,P)$ we have that 
%$
%\sigma \cup \gamma 
%$ and $\sigma \cdot \varepsilon_*( \gamma)$ are homotopic. Hence, in homology these actions coincide; more precisely, for any $[\sigma]\in \Ext_{A^\e}(A,X)$ and $[\gamma]\in \hh(\Hom_{A^\e}(P,P))$
%\[
%[\sigma]\cup [\gamma]=[\sigma]\cdot \hh(\varepsilon_*)([\gamma]).
%\] Because of this we will identify these actions on the level of Ext-modules. 
\end{chunk}

\begin{proposition}\label{prop:cupprod}
Let $A$ be a color DG $Q$-algebra, let $\varepsilon:P\rightarrow A$ be a semiprojective  resolution of $A$ over $A^\e$, let $X$ be a color DG $A^\e$-module. If $P$ admits a co-unital diagonal approximation then, the cup and composition actions of $\Ext_{A^\e}(A,A)$ on $\Ext_{A^\e}(A,X)$ coincide.
\end{proposition}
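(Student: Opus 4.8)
The plan is to compare the two actions at the chain level and exhibit an $A^\e$-linear homotopy between them when evaluated on cycles. Fix cycles $\sigma\in\Hom_{A^\e}(P,X)$ and $\tau\in\Hom_{A^\e}(P,P)$ representing classes in $\Ext_{A^\e}(A,X)$ and $\Ext_{A^\e}(A,A)$, respectively. The composition action sends the pair to $\sigma\tau$, while the cup action of \Cref{cupdef} sends it to $\sigma\cup\tau=\mu(\sigma\otimes\varepsilon\tau)\Phi$. Thus it suffices to produce an $A^\e$-linear homotopy $\sigma\cup\tau\simeq\sigma\tau$ inside the complex $\Hom_{A^\e}(P,X)$.

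First I would factor $\sigma\otimes\varepsilon\tau=(\sigma\otimes\varepsilon)(1\otimes\tau)$, so that $\sigma\cup\tau=\mu(\sigma\otimes\varepsilon)(1\otimes\tau)\Phi$, and then extract the two consequences of co-unitality. From the right-hand identity $(1\otimes\varepsilon)\Phi=\id_P$ together with the right $A$-linearity of $\sigma$, one checks directly that $\mu(\sigma\otimes\varepsilon)\Phi=\sigma$ on the nose, whence $\mu(\sigma\otimes\varepsilon)\Phi\tau=\sigma\tau$. This reduces the whole statement to comparing the two $A^\e$-linear chain maps $(1\otimes\tau)\Phi$ and $\Phi\tau$ from $P$ to $P\otimes_A P$, since applying the chain map $\mu(\sigma\otimes\varepsilon)(-)$ will transport any homotopy between them into a homotopy $\sigma\cup\tau\simeq\sigma\tau$.

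The key step is the comparison of these two lifts. I would post-compose both with $\varepsilon\otimes 1\colon P\otimes_A P\to A\otimes_A P\cong P$, which is a surjective quasi-isomorphism because $P$ is semiprojective and hence $-\otimes_A P$ preserves quasi-isomorphisms (see \ref{tensorSemifree}). Using the left-hand co-unital identity $(\varepsilon\otimes 1)\Phi=\id_P$ and the left $A$-linearity of $\tau$, a short computation gives $(\varepsilon\otimes 1)(1\otimes\tau)\Phi=\tau=(\varepsilon\otimes 1)\Phi\tau$. Since $P$ is semiprojective over $A^\e$, the functor $\Hom_{A^\e}(P,-)$ preserves quasi-isomorphisms, so $(\varepsilon\otimes 1)_*\colon \Hom_{A^\e}(P,P\otimes_A P)\to\Hom_{A^\e}(P,P)$ is a quasi-isomorphism, in particular injective on homology. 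As $(1\otimes\tau)\Phi$ and $\Phi\tau$ become equal after applying $(\varepsilon\otimes 1)_*$, they represent the same class in $\hh\Hom_{A^\e}(P,P\otimes_A P)$, so there is an $A^\e$-linear homotopy $h$ with $\partial h+h\partial=(1\otimes\tau)\Phi-\Phi\tau$. Then $\mu(\sigma\otimes\varepsilon)h$ is the desired homotopy, completing the argument.

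I expect the main obstacle to be the sign and bicharacter bookkeeping needed to adapt \cite[Proposition 4.5]{BR} to the color setting: each transposition inside a tensor product of maps contributes both a Koszul sign and a factor of $\c$, and I would have to verify that these combine so that the on-the-nose identities $\mu(\sigma\otimes\varepsilon)\Phi=\sigma$ and $(\varepsilon\otimes 1)(1\otimes\tau)\Phi=\tau=(\varepsilon\otimes 1)\Phi\tau$ genuinely hold, that the homotopy $h$ is color $A^\e$-linear, and that post-composition with the fixed color chain map $\mu(\sigma\otimes\varepsilon)$ indeed lands the homotopy in $\Hom_{A^\e}(P,X)$. The structural skeleton is exactly the Buchweitz--Roberts comparison; the genuine work is confirming that the color conventions of \Cref{s:background} keep every intermediate identity sign-consistent.
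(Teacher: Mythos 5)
Your argument is correct, and while it shares the paper's basic mechanism (detecting a homotopy through an $\varepsilon$-induced quasi-isomorphism of Hom complexes and then transporting it), the reduction is genuinely different. The paper proves a $\tau$-independent statement: using that $P\otimes_AP$ is semiprojective over $A^\e$, post-composition with $\varepsilon$ is a quasi-isomorphism $\Hom_{A^\e}(P\otimes_AP,P)\to\Hom_{A^\e}(P\otimes_AP,A)$, whence $1\otimes\varepsilon\simeq\varepsilon\otimes 1$; it then assembles $\sigma\cup\tau\simeq\sigma\tau$ from a homotopy-commutative diagram. You instead prove the $\tau$-dependent homotopy $(1\otimes\tau)\Phi\simeq\Phi\tau$ in $\Hom_{A^\e}(P,P\otimes_AP)$, detected by $(\varepsilon\otimes 1)_*$, and push it forward with the cycle $\mu(\sigma\otimes\varepsilon)$. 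What your version buys is that the source of the detection complex is $P$ itself, so the needed preservation of quasi-isomorphisms by $\Hom_{A^\e}(P,-)$ is literally the semiprojectivity hypothesis; the price is that you must know $\varepsilon\otimes 1\colon P\otimes_AP\to P$ is a quasi-isomorphism, an input the paper's route does not use in this form.

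That last input is the one place where your proof has a gap, though it is exactly on par with an unproved assertion in the paper's own argument. Your appeal to \ref{tensorSemifree} requires $P$ to be semiprojective (or at least K-flat) \emph{over $A$}, whereas the hypothesis only gives semiprojectivity over $A^\e$; restriction of scalars yields the former only when $A^\e$ is itself semiprojective over $A$, i.e., essentially when $A$ is semiprojective over $Q$. This holds in the paper's application (there $A=E$ is semifree over $Q$) and is implicitly assumed throughout, and the paper's bare claim that $P\otimes_AP$ is semiprojective over $A^\e$ needs precisely the same input, so neither argument is self-contained at the stated level of generality; making that standing assumption explicit closes your gap. The bookkeeping you defer all goes through: since $\varepsilon$ has homological degree zero and trivial color, the identities $\mu(\sigma\otimes\varepsilon)\Phi=\sigma$ and $(\varepsilon\otimes 1)(1\otimes\tau)=\tau(\varepsilon\otimes 1)$ hold on the nose with no Koszul signs or $\c$-factors, the homotopy $h$ is $A^\e$-linear simply because it is an element of $\Hom_{A^\e}(P,P\otimes_AP)$, and post-composition by the cycle $\mu(\sigma\otimes\varepsilon)$ carries boundaries to boundaries by the Leibniz rule.
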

\begin{proof}
First, we claim that $1\otimes \varepsilon$ and  $\varepsilon\otimes 1$ are homotopic in $\Hom_{A^\e}(P\otimes_A P, P).$ Indeed, as $P\otimes_A P$ is semiprojective over $A^\e$, $\varepsilon$ induces a quasi-isomorphism
\[
\Hom_{A^\e}(P\otimes_A P, P)\xra{\simeq }\Hom_{A^\e}(P\otimes_A P, A)
\] given by  $\psi\mapsto \varepsilon \psi.$  Note that $\varepsilon \circ (1\otimes \varepsilon)= \varepsilon \circ  (\varepsilon \otimes 1)$ and so the quasi-isomorphism above implies that $1\otimes \varepsilon- \varepsilon\otimes 1$ is a boundary in $\Hom_{A^\e}(P\otimes_A P,P), $ as needed. 

Now by the claim above, the square below commutes up to homotopy and hence, we have the following homotopy commutative diagram of DG $A^\e$-modules: 
\begin{center}
\begin{tikzcd}
P \arrow[r, "\Phi"] \arrow[rd, Rightarrow,no head]& P\otimes_AP \arrow[r, "1\otimes \tau"] \arrow[d, "\varepsilon\otimes1"'] & P\otimes_AP \arrow[r, "\sigma\otimes1"] \arrow[d, "1\otimes\varepsilon"'] & X\otimes_A P \arrow[r, "1\otimes\varepsilon"] & X\otimes_A A \arrow[dd, "\mu"] \\
& P \arrow[r,"\tau"]\arrow[rrrd,"\sigma\tau"']&P\arrow[rrd, "\sigma"]&&\\
&&&& X                             
\end{tikzcd}
\end{center}
where $\sigma\in\Hom_{A^\e}(P,X)$ and $\tau\in\Hom_{A^\e}(P,P)$. Hence $\sigma\cup \tau$, which is given by the composition of the four horizontal maps at the top, is homotopic to $\sigma\tau$.
\end{proof}

%Let $Q$ be a skew polynomial ring,   let $f_1,\ldots,f_c$ be a regular sequence of normal elements in $Q$ and set $R=Q/(\f).$  Let $E$ be the skew Koszul complex
%\[
%E=Q\langle e_1,\ldots, e_c|\del e_i=f_i\rangle
%\] and let  $E^\e\langle Y\rangle$ be the color DG $E^\e$-algebra resolution of $E$ from Construction \ref{semifreediagonalres}.  
Now we specialize and  return to the setting of  \Cref{s:semifreediagonal}. In the proof of the following proposition we will make use of the divided powers DG $E^\e$-structure on $\Ay{E^\e}{Y};$ see \cite[Section 6]{ColorDGA} or \cite[Chapter 1]{GulLev} for more details regarding DG algebras with divided powers. 
\begin{proposition}\label{prop:approx}
The resolution $\Ay{E^\e}{Y}\xra{\simeq} E$ admits a co-unital diagonal approximation $\Phi:\Ay{E^\e}{Y}\rightarrow \Ay{E^\e}{Y}\otimes_E\Ay{E^\e}{Y}$, determined by $y\mapsto y\otimes1+1\otimes y$ for all $y\in Y$.
\end{proposition}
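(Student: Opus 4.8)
The plan is to build $\Phi$ not merely as a module map but as a morphism of color DG $E^\e$-algebras compatible with divided powers; such a morphism is in particular a morphism of left color DG $E^\e$-modules, and it is genuinely \emph{determined} by the images of the generators $y_i$. First I would record the algebra structures in play. Since $E$ is color commutative, $\Ay{E^\e}{Y}\otimes_E\Ay{E^\e}{Y}$ is again a color DG $E^\e$-algebra with divided powers: the tensor product is formed over the color-commutative base $E$, balancing the right $E$-action on the first factor against the left $E$-action on the second (cf. \cite[Lemma 2]{braided}), while the residual left action on the first factor and right action on the second equip it with an $E^\e=E\otimes_Q E$-algebra structure whose structure map $\Phi_0\colon E^\e\to \Ay{E^\e}{Y}\otimes_E\Ay{E^\e}{Y}$ sends the left generator $e_i'=e_i\otimes1$ to $e_i'\otimes1$ and the right generator $e_i=1\otimes e_i$ to $1\otimes e_i$. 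I would also observe that divided powers pass to this tensor product, so that $y_i\otimes1+1\otimes y_i$ is a legitimate divided-power element.

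Next I would invoke the universal property of adjoining divided-power variables to kill cycles (as in \cite[Lemma 6.4]{ColorDGA}, used in the proof of \Cref{prop:Eres}; see also \Cref{adjoinvariables}): a morphism of color DG $E^\e$-algebras with divided powers out of $\Ay{E^\e}{Y}=\Ay{E^\e}{Y\mid\del y_i=e_i'-e_i}$ extending $\Phi_0$ exists and is uniquely determined by images $w_i$ of the $y_i$, provided $\del w_i=\Phi_0(\del y_i)=\Phi_0(e_i'-e_i)$. Setting $w_i:=y_i\otimes1+1\otimes y_i$, the crux is the computation
\[
\del w_i=(e_i'-e_i)\otimes1+1\otimes(e_i'-e_i)=e_i'\otimes1-e_i\otimes1+1\otimes e_i'-1\otimes e_i.
\]
Because the tensor is taken over $E$, the balancing relation $e_i\otimes1=1\otimes e_i'$ holds (the right generator of the first factor equals the left generator of the second, and since it is applied to the unit no color or Koszul sign intervenes), so the two middle terms cancel and $\del w_i=e_i'\otimes1-1\otimes e_i=\Phi_0(e_i'-e_i)$, exactly as required. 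This produces the desired $\Phi$ with $\Phi(y_i)=y_i\otimes1+1\otimes y_i$.

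Finally I would verify the two co-unital triangles. After the identifications $E\otimes_E\Ay{E^\e}{Y}\cong\Ay{E^\e}{Y}\cong\Ay{E^\e}{Y}\otimes_E E$, both $\varepsilon\otimes1$ and $1\otimes\varepsilon$ are morphisms of color DG $E^\e$-algebras to $\Ay{E^\e}{Y}$, so it suffices to compare the composites $(\varepsilon\otimes1)\Phi$ and $(1\otimes\varepsilon)\Phi$ with $\id$ on the algebra generators $E^\e$ and $y_i$. On $E^\e$ both composites restrict to $\Phi_0$ followed by the canonical identifications, hence to the identity; on $y_i$, using $\varepsilon(y_i)=\epsilon(y_i)=0$ one gets $(\varepsilon\otimes1)\Phi(y_i)=1\otimes y_i\cong y_i$ and $(1\otimes\varepsilon)\Phi(y_i)=y_i\otimes1\cong y_i$. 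Since all maps involved are $E^\e$-algebra maps agreeing on generators, they agree, so $\Phi$ is a co-unital diagonal approximation.

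The hard part will be the side-and-sign bookkeeping underlying the crux computation: correctly distinguishing the left generator $e_i'$ from the right generator $e_i$ inside $\Ay{E^\e}{Y}$, identifying the two $E$-algebra structures on $\Ay{E^\e}{Y}$ and the resulting $E^\e$-structure on the tensor product, and confirming that the balancing relation over $E$ carries no residual color or Koszul sign (which is why I apply it at the unit). Once this is pinned down, the cancellation of the middle terms—and hence the whole statement—follows immediately.
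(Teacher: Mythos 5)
Your proof is correct and takes essentially the same approach as the paper: both extend the base map $a\otimes b\mapsto(a\otimes 1)\otimes(1\otimes b)$ to a morphism of color DG $E^\e$-algebras with divided powers via \cite[Lemma 6.4]{ColorDGA}, with the crux in each case being the balancing identity $(1\otimes e_i)\otimes(1\otimes 1)=(1\otimes 1)\otimes(e_i\otimes 1)$ over $E$, which makes $y_i\otimes 1+1\otimes y_i$ a legitimate image for $y_i$. The only differences are cosmetic: the paper computes $\varphi(\del y_i)$ and inserts the cancelling middle terms while you compute $\del(y_i\otimes 1+1\otimes y_i)$ and cancel them, and you spell out the co-unital triangles that the paper declares immediate.
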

\begin{proof}
Let $\varphi:  E^\e\to E^\e\otimes_E E^\e $ be the color DG $Q$-algebra map given by 
\[
a\otimes b\mapsto (a\otimes 1)\otimes (1\otimes b). 
\] Notice that $\varphi$ is compatible with the system of divided powers on $E^\e$ and  $E^\e\otimes_E E^\e$. 
Let $y\in Y$ and let $e\in E$ be such that $\partial(y)=e\otimes1-1\otimes e$ in $\Ay{E^\e}{Y}.$ Observe that in $\Ay{E^\e}{Y}\otimes_E \Ay{E^\e}{Y}$ the following holds:
\begin{align*}
\varphi(\partial(y))&=\varphi(e\otimes 1-1\otimes e)\\&=(e\otimes1)\otimes(1\otimes 1)- (1\otimes1)\otimes(1\otimes e) \\
&=(e\otimes1)\otimes(1\otimes 1)- (1\otimes e)\otimes(1\otimes 1)\\
&   \hphantom{=} +(1\otimes 1)\otimes(e\otimes 1)- (1\otimes1)\otimes(1\otimes e) \\
&=(e\otimes 1-1\otimes e)\otimes (1\otimes 1)+(1\otimes 1)\otimes (e\otimes 1-1\otimes e)\\
&=\del(y)\otimes (1\otimes 1)+(1\otimes 1)\otimes \del(y).
\end{align*} 
By \cite[Lemma 6.4]{ColorDGA}, $\varphi$ uniquely extends to a morphism of color DG $E^\e$-algebras with divided powers  \[ \Phi: \Ay{E^\e}{Y}\to \Ay{E^\e}{Y}\otimes_E \Ay{E^\e}{Y}
\]
given by $
y\mapsto y\otimes 1+1\otimes y.
$ Finally, the co-unital approximation condition is immediate.
\end{proof}

\begin{chunk}\label{hochschildsameascup}
It follows from \cref{prop:cupprod} and \cref{prop:approx} that the composition action of $\dHH{R}{Q}$ on $\Ext_{E^\e}(E,X)$ is the same as the cup action for any color DG $E^\e$-module $X$.
\end{chunk}

\begin{chunk}\label{chunkPhi} Since $\Phi$ is a map of divided powers algebras, for any $y\in Y$ we have that 
\[
\Phi(y^{(H)})=\Phi(y_1)^{(h_1)}\ldots \Phi(y_c)^{(h_c)}
\] where $H=(h_1,\ldots,h_c)\in \mathbb{N}^c$. Now a computation shows that
\begin{equation}
    \label{equationforPhi} \Phi(y^{(H)})=\sum_{H'+H''=H}\left(\prod_{i<j}\c(y_i,y_j)^{h'_jh''_i}\right)y^{(H')}\otimes y^{(H'')},
\end{equation}
where $H'=(h_1',\ldots,h_c')$, $H''=(h_1'',\ldots,h_c'')$ and $H'+H''=(h_1'+h_1'',\ldots,h_c'+h_c'').$
\end{chunk}

%%%%%%%%%%%%%%%%%%%%%%%%%%%%%%%%%%%%%%%%%%%%%%%%%%
\section{Chain level cohomology operators}
\label{s:extmodules}

Adopting the usual notation we also    fix \[
S=Q[\chi_1,\ldots,\chi_c\mid\gdeg{\chi_i}=\gdeg{f_i}^{-1},|\chi_i|=2,i=1,\ldots,c]
\] for the remainder of the article.  We will regard $S$ as a color DG $Q$-algebra with trivial differential.

\begin{chunk}\label{dualaction} We notice that $S$ can be realized as $\Hom_Q(\Ay{Q}{Y},Q)$ by \cref{dualpolyisdivpowalg} where $\chi_i$ is the $Q$-linear dual of $y_i$. We recall that the algebra structure on $S$ is induced by the coalgebra structure on $\Ay{Q}{Y}$.  That is, the product $\chi_i\chi_j$ is identified with the composition 
\[
\Ay{Q}{Y}\xra{\Delta}\Ay{Q}{Y}\otimes_Q \Ay{Q}{Y} \xra{\chi_i\otimes \chi_j}Q\otimes_Q Q\xra{\mu} Q
\] where the isomorphism is the multiplication map and $\Delta$ is defined as $\Phi$ in \Cref{equationforPhi}, namely
\[ \Delta(y^{(H)})=\sum_{H'+H''=H}\left(\prod_{i<j}\c(y_i,y_j)^{h'_jh''_i}\right)y^{(H')}\otimes y^{(H'')},
\]
where $H'=(h_1',\ldots,h_c')$ and $H''=(h_1'',\ldots,h_c'')$.
\end{chunk}

\begin{construction}
\label{c:1}
Let $X$ be a color DG $E^\e$-module.  Define $\E_X$ to be the DG $S$-module with underlying graded $Q$-module $S\otimes_Q X$ and differential
\[\del^{\E_X}=1\otimes \del^X+\sum_{i=1}^c \chi_i\otimes (\lambda_i-\lambda_i')\] where $\lambda_i$ and $\lambda_i'$ are left multiplication by  $1\otimes e_i$ and  $e_i\otimes 1$, respectively. 
Explicitly, as a graded $S$-module  $\E_X$ is  $$\E_X^\natural\cong \bigoplus_{j\in \mathbb{Z}} \mathsf{\Sigma}^{2j}(S\otimes_Q X_j) $$ 
and  on elements its differential is prescribed by 
\begin{align*}
    \del^{\E_X}(s\otimes x)&=s\otimes \del^X(x)+\sum_{i=1}^c \c(1\otimes e_i-e_i\otimes 1, s)\chi_i s\otimes (1\otimes e_i-e_i\otimes 1)x\\
   &=s\otimes \del^X(x)+\sum_{i=1}^c \c(f_i, s)\chi_i s\otimes (1\otimes e_i-e_i\otimes 1)x.\\
\end{align*}
\end{construction}

\begin{proposition}
$\E_X$ is a color DG  $S$-module.  
\end{proposition}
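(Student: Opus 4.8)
The plan is to verify the two defining properties of a color DG module for the object $\E_X$ constructed in \Cref{c:1}: that the prescribed differential $\del^{\E_X}$ squares to zero, and that it is a color derivation with respect to the $S$-action (i.e. satisfies the graded color Leibniz rule) while being $G$-homogeneous of color $e_G$. Since the underlying graded $S$-module structure is already built into the construction, the content of the proposition is really that $\del^{\E_X}$ is a well-defined differential compatible with the $S$-structure.

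First I would check that $(\del^{\E_X})^2 = 0$. Writing $\del^{\E_X} = 1\otimes\del^X + \sum_i \chi_i\otimes(\lambda_i - \lambda_i')$, I expand the square into three types of terms. The term $(1\otimes\del^X)^2$ vanishes because $\del^X$ is a differential. The cross terms pair $1\otimes\del^X$ with $\sum_i\chi_i\otimes(\lambda_i-\lambda_i')$; here I would use that $\del^X$ is a derivation over $E^\e$ so that $\del^X$ anticommutes appropriately with left multiplication by the cycles $1\otimes e_i$ and $e_i\otimes 1$ — more precisely, since $e_i'-e_i = e_i\otimes 1 - 1\otimes e_i$ is a cycle in $E^\e$, the operators $\lambda_i-\lambda_i'$ commute with $\del^X$ up to the sign dictated by homological degree, and the two cross terms cancel. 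The remaining terms are $\sum_{i,j}\chi_i\chi_j\otimes(\lambda_i-\lambda_i')(\lambda_j-\lambda_j')$; here I would use that the $e_i, e_i'$ are odd exterior-type variables that skew-commute and square to zero in $E^\e$, so $(\lambda_i-\lambda_i')^2 = 0$ and the mixed $i\neq j$ terms cancel in pairs once the color commutation relation between $\chi_i$ and $\chi_j$ (recorded in the \Cref{theoremhhcomputed} and the Remark after \Cref{t:2}, namely $\gdeg{\chi_i}=\gdeg{f_i}^{-1}$) is matched against the color commutation of $\lambda_i-\lambda_i'$ with $\lambda_j-\lambda_j'$. This sign/color bookkeeping is where I would spend the most care.

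Next I would verify that $\del^{\E_X}$ is $G$-homogeneous of color $e_G$ and a color derivation over $S$. Homogeneity is a matter of checking that each summand preserves the total color: in $1\otimes\del^X$ the factor $\del^X$ has color $e_G$, and in $\chi_i\otimes(\lambda_i-\lambda_i')$ the color $\gdeg{\chi_i}=\gdeg{f_i}^{-1}$ contributed by $\chi_i$ is cancelled by the color $\gdeg{f_i}$ of left multiplication by $1\otimes e_i$ and $e_i\otimes 1$ (both have color $\gdeg{f_i}$ since $\partial e_i = f_i$). The explicit formula for $\del^{\E_X}(s\otimes x)$ already displayed in \Cref{c:1}, with its factor $\c(f_i,s)$, encodes exactly the color-commutation needed to move $\chi_i$ past $s$, so confirming the Leibniz rule amounts to checking that this displayed formula agrees with applying the abstract operator $1\otimes\del^X + \sum\chi_i\otimes(\lambda_i-\lambda_i')$ to $s\otimes x$ after commuting $\chi_i$ to the left of $s$.

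The main obstacle I anticipate is the sign-and-color accounting in showing $(\del^{\E_X})^2=0$, specifically the cancellation in the $\sum_{i,j}\chi_i\chi_j\otimes(\lambda_i-\lambda_i')(\lambda_j-\lambda_j')$ term. Because $S$ is an Ore extension in which $\chi_i\chi_j = (-1)^{|\chi_i||\chi_j|}\c(\gdeg{\chi_i},\gdeg{\chi_j})\chi_j\chi_i = \c(f_i,f_j)^{-1}\chi_j\chi_i$ (using $|\chi_i|=2$ even), while the exterior variables satisfy $e_i e_j = -\c(f_i,f_j) e_j e_i$, one must confirm that the color factor $\c(f_i,f_j)^{-1}$ from the $\chi$'s exactly compensates the factor $-\c(f_i,f_j)$ picked up from transposing the $\lambda$-operators, together with the Koszul sign from the odd degrees, so that the $(i,j)$ and $(j,i)$ contributions cancel. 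I expect this to reduce, after careful bookkeeping, to the identity that governs the product $\chi_i\chi_j$ in \Cref{dualaction} via the coproduct $\Delta$; invoking that compatibility (rather than recomputing from scratch) is the cleanest route. Once these cancellations are confirmed, the remaining verifications — associativity and unitality of the $S$-action on $\E_X$ — are immediate from the Ore extension structure of $S$ and require no further argument.
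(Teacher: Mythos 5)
Your overall strategy is the same as the paper's, just with the two verifications in the opposite order: the paper first proves that $\del^{\E_X}$ is color $S$-linear (your second step), and then uses that linearity to reduce $(\del^{\E_X})^2=0$ to the single check $\del^2(1\otimes x)=0$, which avoids dragging operator-level color factors through the computation. The problem lies in the key cancellation, exactly where you said you would spend the most care: two of the identities you propose to use are wrong, and your sketch only closes because the errors compensate. First, since $\c$ is a bicharacter and $\gdeg{\chi_i}=\gdeg{f_i}^{-1}$, one has $\c(\chi_i,\chi_j)=\c\bigl(\gdeg{f_i}^{-1},\gdeg{f_j}^{-1}\bigr)=\c(f_i,f_j)$, so the Ore relation is $\chi_i\chi_j=\c(f_i,f_j)\chi_j\chi_i$, \emph{not} $\c(f_i,f_j)^{-1}\chi_j\chi_i$; this is what the paper uses in the chain $\c(f,\chi')\chi\chi'=\c(f,f')^{-1}\chi\chi'=\chi'\chi$. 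Second, writing $\mu_i=\lambda_i-\lambda_i'$, the square of $\sum_i\chi_i\otimes\mu_i$ is not $\sum_{i,j}\chi_i\chi_j\otimes\mu_i\mu_j$: composing these operators produces the color factors $\c(f_i,\chi_j)=\c(f_i,f_j)^{-1}$, as one sees by applying the explicit formula $\del^{\E_X}(s\otimes x)=s\otimes\del^X(x)+\sum_i\c(f_i,s)\chi_is\otimes\mu_i x$ of \Cref{c:1} with $s=\chi_j$; these are the same factors $\c(f_i,s)$ that are forced by color $S$-linearity, and they appear in the paper's equation (\ref{e:2}).

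These two errors compensate uniformly: in your version both the $(i,j)$ and the $(j,i)$ contributions differ from the correct ones by the same unit $\c(f_i,f_j)^{-1}$, so your cancellation formally goes through. But if either error is corrected in isolation the sum no longer vanishes (one is left with an obstruction of the form $\c(f_i,f_j)-\c(f_i,f_j)^{-1}$ or $\c(f_i,f_j)^{-2}-1$, nonzero in general), which shows the verification as written is not sound even though the conclusion is true. The correct bookkeeping, which is the paper's, is $\c(f_i,\chi_j)\chi_i\chi_j\otimes\mu_i\mu_j=\chi_j\chi_i\otimes\mu_i\mu_j$ and $\c(f_j,\chi_i)\chi_j\chi_i\otimes\mu_j\mu_i=-\chi_j\chi_i\otimes\mu_i\mu_j$, whence the pairwise cancellation (the diagonal terms vanish since $\mu_i^2=0$, as you correctly note). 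Finally, your proposed shortcut of deducing the cancellation from the coproduct compatibility in \Cref{dualaction} does not substitute for this computation: that chunk only defines the product on $S$ as the dual of the divided powers coalgebra, and no identity there replaces the direct Ore-relation calculation above.
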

\begin{proof}
Note that $\del:=\del^{\E_X}$ is (color) $S$-linear. Indeed, continuing from the computation in Construction \ref{c:1} 
\begin{align*}
    \del(s\otimes x)  &=s\otimes \del^X(x)+\sum_{i=1}^c \c(f_i, s)\chi_i s\otimes (1\otimes e_i-e_i\otimes 1)x\\
    &=s\otimes \del^X(x)+\sum_{i=1}^c \c(f_i, s)\c(\chi_i,s)s\chi_i \otimes (1\otimes e_i-e_i\otimes 1)x\\
      &=s\otimes \del^X(x)+\sum_{i=1}^c s\chi_i \otimes (1\otimes e_i-e_i\otimes 1)x\\
      &=s\del(1\otimes x)
\end{align*}
where the third equality holds because $\c(\chi_i,-):=\c(f_i,-)^{-1}$.

It remains to show $\del^2=0$, but since $\del$ is (color) $S$-linear it suffices to check $\del^2(1\otimes x)=0$ for each $x$ in $X$. Consider 
\begin{align*}
    \del^2(1\otimes x)&= \del\left(1\otimes \del^X(x)+\sum_{i=1}^c\chi_i\otimes (1\otimes e_i-e_i\otimes 1)\cdot x \right)\\
    &=1\otimes \del^X\del^X(x)+ \sum_{i=1}^c\chi_i\otimes (1\otimes e_i-e_i\otimes 1)\cdot \del^X(x)\\
      & \ \ +\sum_{i=1}^c\chi_i\otimes \del^X((1\otimes e_i-e_i\otimes 1)\cdot x) \\
      &  \ \ +\sum_{i=1}^c\sum_{j=1}^c \c(f_i,\chi_j)\chi_i\chi_j\otimes  (1\otimes e_i-e_i\otimes 1) (1\otimes e_j-e_j\otimes 1)x.
\end{align*}
The first summand in the last expression above is evidently zero.  Also,  as  $X$ is a DG $E^\e$-module
\begin{align*}
\del^X((1\otimes e_i-e_i\otimes 1)x)&=\del^{E^\e}(1\otimes e_i-e_i\otimes 1)x-(1\otimes e_i-e_i\otimes 1)\del^X(x)\\
&=-(1\otimes e_i-e_i\otimes 1)\del^X(x),
\end{align*}
and so \begin{equation}
\del^2(1\otimes x)=\sum_{i=1}^c\sum_{j=1}^c \c(f_i,\chi_j) \chi_i\chi_j\otimes  (1\otimes e_i-e_i\otimes 1) (1\otimes e_j-e_j\otimes 1)x.\label{e:2}
\end{equation}
For $i=j$, 
\[ \c(f_i,\chi_j) \chi_i\chi_j\otimes  (1\otimes e_i-e_i\otimes 1) (1\otimes e_j-e_j\otimes 1)x=0.\] For $i\neq j$, we show that the two terms on the right-hand side of (\ref{e:2}) involving $i$ and $j$ cancel. Indeed, 
set $f:=f_i$, $f':=f_j$, $e:=1\otimes e_i-e_i\otimes 1$, $e':=1\otimes e_j-e_j\otimes 1$, $\chi=\chi_i$ and $\chi':=\chi_j$ for ease of notation. Since 
\[
\c(f,\chi')\chi \chi'=\c(f,f')^{-1}\chi\chi'=\c(f,f')^{-1}\c(f,f')\chi'\chi=\chi'\chi
\]
 the first equality below follows
\begin{align*}
    \c(f,\chi')\chi\chi'\otimes ee'+\c(f',\chi)\chi'\chi\otimes e'e
    &=\chi'\chi\otimes ee'+\c(f',\chi)\chi'\chi\otimes e'e\\
    &=\chi'\chi\otimes(ee'+ \c(f',f)^{-1} e'e)\\
    &=\chi'\chi\otimes (ee'-\c(f',f)^{-1}\c(f',f)ee')\\
    &=0.
\end{align*} 
Combining these calculations with (\ref{e:2}) it follows that $\del^2=0$, finishing the proof that $\E_X$ is a color DG $S$-module. 
\end{proof}

\begin{theorem}\label{p:1}
For any bounded above DG $E^\e$-module $X$, the map

\begin{center}
\begin{tikzpicture}[baseline=(current  bounding  box.center)]
 \matrix (m) [matrix of math nodes,row sep=0.5em,column sep=4em,minimum width=2em] {
 \eta_X:\E_X&\Hom_{E^\e}(\Ay{E^\e}{Y},X)\\
 \hphantom{\eta_X:}\quad\chi_i\otimes x&(y^{(H)}\mapsto\c(x,y^{(H)})\chi_i(y^{(H)})x)\\};
\path[->] (m-1-1) edge (m-1-2);
\path[|->] (m-2-1) edge (m-2-2);
\end{tikzpicture}
\end{center}
is an isomorphism color DG $Q$-modules. Moreover, $\eta_X$ satisfies
\begin{equation}
\label{eq:equivariant}
\eta_X(\chi_i\otimes x\cdot\chi_j)=\eta_X(\chi_i\otimes x)\cdot\eta_E(\chi_j\otimes1),
\end{equation}
where $x\in X$ (cf. \Cref{augmentedcup}).
That is, the isomorphism $\eta_X$ is equivariant with respect to the DG algebra map 
\[
S\to S\otimes_Q E\xra{\eta_E} \Hom_{E^\e}(\Ay{E^\e}{Y},E).
\] %where the right-hand side is equipped with the cup product (cf. \Cref{augmentedcup}). 
\end{theorem}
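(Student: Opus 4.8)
The plan is to analyze $\eta_X$ summand-by-summand using the $E^\e$-basis $\{y^{(H)} : H\in\mathbb{N}^c\}$ of $\Ay{E^\e}{Y}$ coming from its divided-power structure. Since $\Ay{E^\e}{Y}$ is free over $E^\e$ on these monomials, a color $E^\e$-linear map out of it is determined by its (arbitrary) values on the $y^{(H)}$, so as a graded color $Q$-module $\Hom_{E^\e}(\Ay{E^\e}{Y},X)\cong\prod_H X$, the $H$-th factor recording the value on $y^{(H)}$. On the source side the $Q$-basis $\{\chi^H\}$ of $S$ gives $\E_X=S\otimes_Q X\cong\bigoplus_H X$. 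Unwinding the defining formula, extended to all monomials, shows $\eta_X(\chi^H\otimes x)$ is supported on the single basis element $y^{(H)}$, where it takes the value $\c(x,y^{(H)})x$; thus $\eta_X$ carries the $H$-summand of $\E_X$ isomorphically onto the $H$-factor of the product. The color factor $\c(x,y^{(H)})$ is exactly what is needed for $\eta_X(\chi^H\otimes x)$ to be a color-homogeneous $E^\e$-linear map of the correct color $\gdeg{\chi^H}\gdeg{x}$; verifying this, together with $Q$-linearity, is the first step.

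First I would establish bijectivity, for which the only issue is the passage from $\prod_H$ to $\bigoplus_H$. This is exactly where the hypothesis that $X$ is bounded above is used: for a homogeneous map of fixed degree, the value on $y^{(H)}$ lies in a single graded component of $X$ whose homological degree tends to $+\infty$ with $|H|$, hence vanishes once $|H|$ is large. Consequently, in each degree the product collapses to a finite direct sum matching the direct sum defining $\E_X$, and the degreewise bijection of the previous paragraph becomes an isomorphism of graded color $Q$-modules.

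Next I would verify that $\eta_X$ is a chain map by computing the Hom-differential $\del(f)=\del^X f\pm f\,\del$ on a basis element $y^{(H)}$, using the divided-power differential of $\Ay{E^\e}{Y}$, which lowers a single divided power $y_i^{(h_i)}$ to $y_i^{(h_i-1)}$ while introducing the factor $e_i'-e_i$. The term $\del^X f$ reproduces the summand $1\otimes\del^X$ of $\del^{\E_X}$, while the term $f\,\del$, through the appearance of $e_i'-e_i$, reproduces the summand $\sum_i\chi_i\otimes(\lambda_i-\lambda_i')$. This is precisely how $\del^{\E_X}$ was engineered in \Cref{c:1}, so the computation goes through once the color factors $\c(x,y^{(H)})$ and $\c(f_i,s)$ are tracked.

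Finally, for the equivariance identity \eqref{eq:equivariant} I would use that the algebra structure on $S=\Hom_Q(\Ay{Q}{Y},Q)$ is, by \ref{dualaction}, dual to the coproduct $\Delta$ of \Cref{equationforPhi}, while by \Cref{prop:approx} the co-unital diagonal approximation $\Phi$ defining the cup action agrees with that same coproduct on the $y^{(H)}$. Thus evaluating $\eta_X(\chi_i\otimes x)\cup\eta_E(\chi_j\otimes1)$ via \Cref{cupdef} amounts to pairing $\chi_i$ and $\chi_j$ against the two tensor legs of $\Phi(y^{(H)})=\Delta(y^{(H)})$; since $\eta_X(\chi_i\otimes x)$ is supported on $y_i$ and $\eta_E(\chi_j\otimes1)$ on $y_j$, only the leg dual to $\chi_i\chi_j$ survives, returning $\eta_X(\chi_i\otimes x\cdot\chi_j)$. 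I expect this last step to be the main obstacle: one must reconcile the cup-product signs produced by $\Phi$ with the multiplication signs $\prod_{k<l}\c(y_k,y_l)^{h_l'h_k''}$ built into the product on $S$ and the evaluation signs $\c(x,y^{(H)})$. The color bookkeeping here is delicate, and the definition of $\eta_X$ is tuned precisely so these factors cancel. Combined with \ref{hochschildsameascup}, this also identifies the $S$-module structure on $\E_X$ with the genuine composition action of $\dHH{R}{Q}$.
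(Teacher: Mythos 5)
Your proposal is correct and follows essentially the same route as the paper: your basis-by-basis decomposition (freeness of $\Ay{E^\e}{Y}$ over $E^\e$ on the $y^{(H)}$, collapse of the product to a direct sum via boundedness of $X$, and matching $\chi^H$ with the dual basis of $y^{(H)}$) is exactly the paper's chain of isomorphisms \cref{e4}, \cref{e5}, \cref{e6} (adjunction, Hom--tensor interchange, and \Cref{dualpolyisdivpowalg}), and your differential check and your evaluation of both sides of \cref{eq:equivariant} on $y^{(H)}$ via the agreement of $\Delta$ and $\Phi$ are precisely the paper's remaining two steps.
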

\begin{proof} Consider the following isomorphisms of graded $S$-modules
\begin{align}
    \Hom_{E^\e}(E^\e\langle Y\rangle, X)^\natural&\cong \Hom_Q(Q\langle Y\rangle, X)^\natural \label{e4}\\
    &\cong \Hom_Q(Q\langle Y\rangle, Q)\otimes_Q X^\natural \label{e5}\\
    &\cong S\otimes_Q X^\natural\label{e6}
\end{align} where (\ref{e4}) is adjunction, (\ref{e5}) follows as $Q\langle Y\rangle$ consists of degreewise finite rank free $Q$-modules and $X$ is bounded above, and (\ref{e6}) is discussed in  the appendix (cf. \Cref{dualpolyisdivpowalg}).

Now we check that the isomorphisms above send  $\del^{\Hom_{E^\e}(E^\e\langle Y\rangle, X)}$ to $\del^{\E_X}$. Indeed, 
\[
\del^{\Hom_{E^\e}(E^\e\langle Y\rangle, X)}=\Hom(E^\e\langle Y\rangle,\del^X)-\Hom(\del^{E^\e}\otimes1+\sum_{i=1}^c(\lambda_i'-\lambda_i)\otimes \chi_i, X)
\]
and so (\ref{e4}) maps this differential to 
\[
\Hom(Q\langle Y\rangle,\del^X)-\sum_{i=1}^c\c(f_i,\lambda_i'-\lambda_i)^{-1}\Hom( \chi_i, \lambda_i'-\lambda_i)
\] which, using that $\c(-,\lambda_i'-\lambda_i)=\c(-,f_i)$, is simply 
\[
\Hom(Q\langle Y\rangle,\del^X)-\sum_{i=1}^c\Hom( \chi_i, \lambda_i'-\lambda_i)=\Hom(Q\langle Y\rangle,\del^X)+\sum_{i=1}^c\Hom( \chi_i, \lambda_i-\lambda_i').
\] Next, (\ref{e5}) maps the differential to 
\[
\Hom_Q(Q\langle Y\rangle, Q)\otimes \del^X+\sum_{i=1}^c \Hom(\chi_i,Q)\otimes (\lambda_i-\lambda_i')
\]
and so (\ref{e6}) gives us exactly  $\del^{\E_X}$, as claimed.
Finally, the composition of the isomorphisms \Cref{e4}, \Cref{e5}, and \Cref{e6} is exactly $\eta_X.$

Now we prove \Cref{eq:equivariant} holds by showing both sides agree after evaluating  at $y^{(H)}$ and using the $E^\e$-linearity. Let $x\in X$ and $H\in \mathbb{N}^c$. 
First, observe that 
\[
\eta_X(\chi_i\otimes x\cdot\chi_j)(y^{(H)})=\c(x,\chi_jy^{(H)})\chi_i\chi_j(y^{(H)})x
\] where the product $\chi_i\chi_j$ is interpreted as in \Cref{dualaction}. 

Assuming that $i<j$ (the other cases are similar), 
 by the definition of $\chi_i\chi_j$ it follows that the previous display is zero unless $H$ has a 1 in position $i$ and $j$, and zero everywhere else. Hence, the display above is equal to\[
\c(x,y_i)x.
\]
Similarly, using \cref{equationforPhi}, we see that if $H$ has 1 in position $i$ and $j$ and zero everywhere else, then
\[
(\eta_X(\chi_i\otimes x)\cup\eta_E(\chi_j\otimes1))(y^{(H)})=\c(x,y_i)x,
\]
otherwise $(\eta_X(\chi_i\otimes x)\cup\eta_E(\chi_j\otimes1))(y^{(H)})=0$.
\end{proof}

\begin{remark}\label{importantremark}Let $X$ be a bounded above color DG $E^\e$-module.
Combining \Cref{hochschildsameascup} and  \Cref{p:1}, in homology $\eta_X$ induces an equivariant isomorphism
\[
\hh(\E_X)\xra{\hh(\eta_X)} \Ext_{E^\e}(E,X)
\] that respects the canonical projection  
$\pi: S\to S\otimes_Q R\cong \dHH{R}{Q};$
the left-hand side is regarded with the obvious $S$-action from \Cref{c:1} while the right-hand side has the composition action; see the beginning of  \cref{section:severalactions}.  

Viewing $\Ext_{E^\e}(E,X)$ as an $S$-module via restriction of scalars along $\pi$, the previous observation states that $\hh(\eta_X)$ is an isomorphism of graded $S$-modules.
Because of this we  identify these $S$-actions on $\Ext_{E^\e}(E,X). $
\end{remark}

\begin{chunk} \label{c:actions}
Let $F$ and $G$ be color DG $E$-modules. Then $\Hom_Q(F,G)$ naturally inherits the structure of a DG $E^\e$-module. Namely, given $\alpha\in \Hom_Q(F,G)$ (homogeneous with respect to all gradings) define 
\[1\otimes e_i\cdot \alpha:=(-1)^{|\alpha|}\c(e_i,\alpha) \alpha(e_i\cdot -) \ \text{ and } \
e_i\otimes 1\cdot \alpha:=e_i\cdot \alpha( -).\]  Hence, $\E_{\Hom_Q(F,G)}$ can be regarded as a DG $S$-module via  \Cref{c:1}. We will write $\E_{F,G}$ in lieu of $\E_{\Hom_Q(F,G)}$. Moreover, when $F^\natural$ and $G^\natural$ are free as graded  $Q$-modules, it follows that  $\E_{F,G}^\natural$ is a free  graded  $S$-module. 
\end{chunk}

\section{Cohomology operators on Ext modules}\label{s:FiniteGen}

\begin{definition}
\label{d:koszulres} Let $M$ be a color DG $E$-module. A surjective quasi-isomorphism of color DG $E$-modules $F\xra{\simeq} M$ is called a  \emph{Koszul resolution} of $M$ provided that $F$ is semifree over $Q$ via restriction of scalars along the structure map $Q\to E$. When $F$ is a finite DG $E$-module we say    the Koszul resolution is \emph{finite}. In this case, $F$ is  strongly perfect over $Q$. 
\end{definition}

\begin{proposition}\label{finitekosres}
 For each  finite color DG $E$-module $M$, there exists a finite Koszul resolution
$P\xra{\simeq} M$.
\end{proposition}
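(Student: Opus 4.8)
The plan is to build the resolution $P$ by a standard step-by-step construction that simultaneously maintains the DG $E$-module structure and semifreeness over $Q$, mimicking the construction of semifree resolutions from \Cref{prop:DGModRes} but adapted so that freeness is controlled over the base ring $Q$ rather than over $E$. First I would reduce to the problem of resolving $M$ by a DG $E$-module that is degreewise finite free over $Q$, using the fact that $E$ is itself a finite free $Q$-module (it is a Koszul complex on $\f$). Since $M$ is a finite color DG $E$-module, $M^\natural$ is finitely generated over $E^\natural$, and because $E^\natural$ is finite free over $Q$, $M^\natural$ is a finitely generated $Q$-module; moreover $\hh(M)$ is a finitely generated module over $\hh(E)=R$, hence over $Q$.

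The key steps, carried out inductively, would be as follows. I would construct a chain
\[
0=P(-1)\subseteq P(0)\subseteq P(1)\subseteq\cdots
\]
of color DG $E$-modules together with compatible maps $P(n)\to M$, such that each successive quotient $P(n)/P(n-1)$ is a finite free $E$-module on finitely many generators, and such that $\bigcup_n P(n)=P$ gives a surjective quasi-isomorphism onto $M$. At stage $0$ I would choose finitely many homogeneous generators of $\hh(M)$ over $R$ and lift them to cycles, producing a map from a finite free $E$-module $P(0)$ onto a set of generators in homology; at each subsequent stage I would adjoin finitely many free $E$-summands to kill the homology of the mapping cone (equivalently, the kernel of the induced map in homology), using that $\hh(E)=R$ is noetherian so that these kernels are finitely generated over $R$ at each step. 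The noetherian hypothesis, together with the fact that $M$ lives in bounded homological degrees scaled appropriately (it is a finite, hence bounded, DG $E$-module), is exactly what guarantees that the process terminates with $P$ a \emph{finite} DG $E$-module, so that $P^\natural$ is a finite free $E^\natural$-module.

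Finally I would verify the two properties in \Cref{d:koszulres}. Semifreeness over $Q$ is automatic: since each $P(n)/P(n-1)$ is a free $E$-module and $E^\natural$ is finite free over $Q$, the filtration $\{P(n)\}$ exhibits $P^\natural$ as a finite free $Q$-module, and one checks the filtration subquotients are free over $Q$ so $P$ is semifree over $Q$ by restriction of scalars. That $P\xra{\simeq}M$ is a surjective quasi-isomorphism follows by construction. Since $P$ is a finite DG $E$-module, the last sentence of \Cref{d:koszulres} then makes $P$ strongly perfect over $Q$, as $P^\natural$ is a bounded complex of finitely generated free $Q$-modules. The main obstacle I anticipate is the bookkeeping needed to ensure the construction terminates after finitely many stages and stays within bounded homological degrees; this is where the finiteness of $M$ as a DG $E$-module and the noetherianity of $R=\hh(E)$ must be used carefully, essentially invoking the graded version of the minimal-resolution argument (in the spirit of \cite[Proposition B.2]{AISS}) to bound the generators degree by degree.
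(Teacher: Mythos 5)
There is a genuine gap, and it is fatal to the approach as written: your inductive construction cannot terminate, and the finiteness you claim for $P$ is the wrong kind of finiteness. Adjoining finitely many free $E$-summands at each stage to kill the homology of the mapping cone is precisely the construction of a semifree resolution of $M$ over $E$ (as in \Cref{prop:DGModRes}, or \cite[Proposition B.2]{AISS}); that construction controls the rank in each homological degree, but it produces infinitely many degrees' worth of generators unless $M$ is perfect over $E$, which is generically false. Take $M=\kk$: since $E\xra{\simeq}R$ and $R$ is a skew complete intersection with $c\geq 1$, the minimal free resolution of $\kk$ over $E$ (equivalently over $R$) is infinite --- its Poincar\'e series is $(1+t)^n/(1-t^2)^c$ --- so no amount of bookkeeping with the noetherianity of $R=\hh(E)$ or the boundedness of $M$ will make your chain $P(0)\subseteq P(1)\subseteq\cdots$ stop. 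Relatedly, your conclusion that ``$P^\natural$ is a finite free $E^\natural$-module'' asks for more than \Cref{d:koszulres} does, and more than is possible: a finite Koszul resolution must be finite as a DG $E$-module and semifree over $Q$; it is not, and in general cannot be, free over $E^\natural$.

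The missing idea is truncation, and it is exactly where the finite global dimension of $Q$ --- an ingredient your argument never invokes --- enters. The paper's proof starts with the (typically infinite) semifree resolution $F\xra{\simeq}M$ over $E$ with degreewise finite ranks; restricted along $Q\to E$ this is a bounded below complex of finite rank free $Q$-modules with bounded homology $\hh(M)$. Dimension shifting together with $\operatorname{gl.dim}Q<\infty$ shows that $\coker\del^F_{n+1}$ is free over $Q$ for $n\gg 0$; one then sets $F'=(\cdots\to F_{n+2}\to F_{n+1}\to \operatorname{im}\del^F_{n+1}\to 0)$, which is an \emph{acyclic DG $E$-submodule} of $F$ for $n$ large, and takes $P=F/F'$. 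This $P$ is a bounded complex of finitely generated free $Q$-modules (hence strongly perfect over $Q$), it inherits a DG $E$-module structure as a quotient of $F$ by a DG $E$-submodule, and it is finite over $E$ because it is finitely generated over $Q$ --- but it is emphatically not free over $E$. If you wanted to salvage a ``build it directly'' proof, you would have to run your induction adjoining free $Q$-summands while simultaneously constructing a compatible $E$-action, which is considerably more delicate than truncating an $E$-semifree resolution.
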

\begin{proof}
By \ref{prop:DGModRes}, there exists a semifree resolution $\epsilon: F\xra{\simeq }M$ of $M$ over $E$ where 
\[
F^\natural\cong \bigoplus_{j=i}^\infty \mathsf{\Sigma}^j (E^{\beta_j})^\natural
\] for some fixed $i\in \mathbb{Z}$ and nonnegative integers $\beta_j$. In particular, when $F$ is regarded as a complex of $Q$-modules via the structure map $Q\to E$, $F$ is a bounded below complex of finite rank free $Q$-modules. 
As $Q$ has finite global dimension and $M$ is  finite,  $\coker \del^F_{n+1}$ is free over $Q$ for each $n\gg 0$.

Now consider $F'$ defined as
\[
\ldots \to F_{n+2}\to F_{n+1}\to {\mathrm{im}\; \del^F_{n+1}}\to 0;
\] 
it is straightforward to see, simply by degree considerations, $F'$ is a DG $E$-submodule of  $F$. Furthermore, by possibly increasing $n$ one can assume that $F'$ is acyclic and concentrated in degrees strictly larger than $\max\{i:M_i\neq 0\}$.

Next, we take $P$ to be the quotient DG $E$-module $F/F'$
\[
0\to \coker \del^F_{n+1}\to F_{n-1}\xra{\del^F_{n-1}}\ldots \xra{\del^F_{i+1}} F_i\to 0.
\] As  $P$ is the quotient of $F$ by an acyclic DG $E$-submodule $F' $ such that  $\epsilon|_{F'}=0$, there is a canonically induced quasi-isomorphism $P\xra{\simeq} M$ of DG $E$-modules. Finally, we remark that by construction $P$ is strongly perfect over $Q$.
\end{proof}

\begin{chunk}\label{SmodstructureonExt}
Let $M$ and $N$ be finite color DG $R$-modules and fix  $F$  a Koszul resolution of $M$. The quasi-isomorphism  $E\xra{\simeq}R$ induces the first isomorphism of graded color $R$-modules below (see \cite[Proposition 6.7]{FHT} for a proof in the case that $E$ has trivial color), while the second isomorphism follows from \Cref{p1}(1)
\begin{equation}\label{gradediso}
\Ext_R(M,N)\xra{\cong} \Ext_E(M,N)\xra{\cong}\Ext_{E^\e}(E,\Hom_Q(F,N)).
\end{equation} The isomorphism in \Cref{gradediso} provides $\Ext_R(M,N)$ the structure of a $\dHH{R}{Q}$-module (and hence, via restriction of scalars, an $S$-module structure).
\end{chunk}

\begin{theorem}
\label{t:1} \label{t:fg}
Let $M$ and $N$ be  finite  color DG $R$-modules. There exist the following isomorphisms of graded $S$-modules
\[
    \Ext_R(M,N)\cong \hh(\E_{F,N})\cong \hh(\E_{F,G})
\]
where $F\xra{\simeq} M$ and $G\xra{\simeq} N$ are any bounded below Koszul resolutions. Moreover, $\Ext_R(M,N)$ is a finitely generated graded $S$-module.
\end{theorem}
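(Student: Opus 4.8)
The plan is to establish the two graded isomorphisms first and then deduce finite generation from the explicit module structure of $\E_{F,G}$. For the chain of isomorphisms $\Ext_R(M,N)\cong \hh(\E_{F,N})\cong \hh(\E_{F,G})$, I would begin from the identifications already assembled in \Cref{SmodstructureonExt}: the quasi-isomorphism $E\xra{\simeq} R$ gives $\Ext_R(M,N)\cong \Ext_E(M,N)$, and then $\Ext_E(M,N)\cong \Ext_{E^\e}(E,\Hom_Q(F,N))$. Since $F$ is a Koszul resolution, $F^\natural$ is free over $Q$, so by \Cref{c:actions} the complex $\E_{F,N}=\E_{\Hom_Q(F,N)}$ is a color DG $S$-module with underlying free graded $S$-module. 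The key point is then \Cref{importantremark}: because $\Hom_Q(F,N)$ is a bounded-above color DG $E^\e$-module (it is bounded above once $F$ is taken bounded below and $N$ is bounded, using the finiteness hypotheses), the map $\hh(\eta_{\Hom_Q(F,N)})$ furnishes an isomorphism of graded $S$-modules $\hh(\E_{F,N})\xra{\cong}\Ext_{E^\e}(E,\Hom_Q(F,N))$. Composing gives the first isomorphism, and it is $S$-linear by the equivariance recorded in \Cref{p:1} and \Cref{importantremark}.

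For the second isomorphism $\hh(\E_{F,N})\cong \hh(\E_{F,G})$, I would observe that a Koszul resolution $G\xra{\simeq} N$ induces a quasi-isomorphism $\Hom_Q(F,G)\xra{\simeq}\Hom_Q(F,N)$ of color DG $E^\e$-modules: indeed $F^\natural$ is $Q$-free, so $\Hom_Q(F,-)$ is exact and preserves quasi-isomorphisms on the relevant complexes. The construction $X\mapsto \E_X$ of \Cref{c:1} is functorial and, because its underlying graded object is $S\otimes_Q X$ with $S$ flat (free) over $Q$, it sends quasi-isomorphisms of bounded-above $E^\e$-modules to quasi-isomorphisms of color DG $S$-modules. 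Applying $\hh(-)$ yields the desired isomorphism of graded $S$-modules.

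The remaining and most substantive step is finite generation. Here I would exploit the explicit description from \Cref{c:actions}: when both $F^\natural$ and $G^\natural$ are finitely generated free graded $Q$-modules — which is exactly what a \emph{finite} Koszul resolution provides via \Cref{finitekosres} — the complex $\E_{F,G}$ is a bounded complex of finitely generated free graded $S$-modules. The differential $\del^{\E_{F,G}}=1\otimes\del+\sum_i\chi_i\otimes(\lambda_i-\lambda_i')$ is $S$-linear, so $\E_{F,G}$ is a finite complex of finitely generated modules over the noetherian ring $S$ (note $S=Q[\chi_1,\dots,\chi_c]$ is an iterated Ore extension of the noetherian ring $Q$, hence noetherian). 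Its homology is therefore a finitely generated $S$-module. I would take $F$ and $G$ to be the finite Koszul resolutions guaranteed by \Cref{finitekosres}; the already-proven isomorphisms show the $S$-module $\Ext_R(M,N)$ is independent of the choice of resolution, so computing it via finite resolutions yields finite generation.

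The main obstacle I anticipate is not finite generation itself, which follows cleanly once $S$ is seen to be noetherian and $\E_{F,G}$ is a bounded complex of finite free modules, but rather verifying carefully that every isomorphism in the chain is $S$-linear and not merely an isomorphism of graded $Q$- or $R$-modules. The $S$-action on $\Ext_R(M,N)$ is defined indirectly through \Cref{gradediso} and the composition action on $\Ext_{E^\e}$, so I would need to track the equivariance through each identification, leaning on \Cref{eq:equivariant} and \Cref{importantremark} to ensure the cohomology operators $\chi_i$ act compatibly throughout.
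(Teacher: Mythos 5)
Your proposal is correct and takes essentially the same route as the paper: the isomorphisms are obtained from \Cref{p:1} together with the identifications in \Cref{SmodstructureonExt}, with $S$-linearity tracked through the equivariance recorded in \Cref{importantremark}, and finite generation follows by passing to a finite Koszul resolution (\Cref{finitekosres}) so that the relevant $\E$-complex has noetherian underlying graded $S$-module, whence its homology is a subquotient of a noetherian module. The only cosmetic difference is that the paper deduces finite generation from $\E_{F,N}$, requiring only $F$ to be a finite Koszul resolution, whereas you work with $\E_{F,G}$ and take both resolutions finite.
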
 
\begin{proof}
The isomorphisms follow from \Cref{p:1} and the discussion in  \Cref{SmodstructureonExt}. 

For the moreover statement,  let $F$ be a finite Koszul resolution of $M$, which exists by \Cref{finitekosres}.
 Since $F$ is strongly perfect over $Q$, \[
 \E_{F,N}^\natural=S\otimes_Q \Hom_Q(F,N)^\natural\] is a noetherian graded $S$-module. Any subquotient of a noetherian module is again noetherian and so the already established isomorphism of graded $S$-modules, above, implies that $\Ext_R(M,N)$ is a noetherian over $S$.
\end{proof}

\begin{proposition}\label{prop}
 The following hold for color DG $R$-modules $L,M,N$:
\begin{enumerate}
    \item The natural isomorphisms of graded color $Q$-modules 
     \begin{align*}
         \Ext_R(L,M \oplus N)&\cong  \Ext_R(L,M)\oplus \Ext_R(L, N)\\
          \Ext_R(L\oplus M,N)&\cong  \Ext_R(L,N)\oplus \Ext_R(M, N)\\
         \Ext_R(M,\mathsf{\Sigma}^n N)&\cong \mathsf{\Sigma}^n\Ext_R(M,N)\cong \Ext_R(\mathsf{\Sigma}^{-n}M,N)
   \end{align*}
    are isomorphisms of graded $S$-modules;
    \item For any exact sequence of color DG $R$-modules $0\to M^1\to M^2\to M^3\to 0$, the exact sequences of graded $Q$-modules   \begin{align*}
        \Ext_R(M^3,N)&\to \Ext_R(M^2,N)\to \Ext_R(M^1, N)\to \mathsf{\Sigma} \Ext_R(M^3,N)\\
          \Ext_R(L,M^1)&\to \Ext_R(L,M^2)\to \Ext_R(L,M^3)\to \mathsf{\Sigma} \Ext_R(L,M^1)
    \end{align*}  are exact sequences of graded $S$-modules. 
    \end{enumerate}
\end{proposition}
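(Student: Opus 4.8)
The plan is to deduce both parts from the already-established isomorphism $\Ext_R(M,N)\cong \hh(\E_{F,N})$ of graded $S$-modules from \Cref{t:1}, by tracing the additivity and exactness statements through the functorial construction $X\mapsto \E_X$ and the Koszul resolutions. The key observation is that all the maps in the isomorphisms and exact sequences asserted are already known to be isomorphisms (resp. exact) as graded color $Q$-modules; what must be checked is only that each such map is $S$-linear, i.e. compatible with the action of the operators $\chi_1,\dots,\chi_c$. So the entire proof reduces to verifying $S$-equivariance of maps that are constructed at the level of Hom-complexes or their $\E$-models, where the $S$-action is given explicitly by \Cref{c:1}.

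For part (1), first I would fix finite Koszul resolutions and realize each Ext as $\hh(\E_{F,G})$ via \Cref{t:1}. For the additivity in the second argument, a direct-sum decomposition $N\cong M\oplus N$ induces $\Hom_Q(F,M\oplus N)\cong \Hom_Q(F,M)\oplus \Hom_Q(F,N)$ as color DG $E^\e$-modules, hence $\E_{\Hom_Q(F,M\oplus N)}\cong \E_{\Hom_Q(F,M)}\oplus \E_{\Hom_Q(F,N)}$ as color DG $S$-modules, since $\E_{(-)}=S\otimes_Q(-)$ is additive and the differential in \Cref{c:1} is built additively out of $\del^X$ and the operators $\lambda_i-\lambda_i'$. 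Passing to homology gives the claimed $S$-linear isomorphism. Additivity in the first argument follows the same way, using that a direct sum $F\oplus F'$ of Koszul resolutions of $L$ and $M$ is a Koszul resolution of $L\oplus M$. For the suspension isomorphisms, I would note that $\Sigma^n$ commutes with $\Hom_Q(F,-)$ up to the standard sign and with the functor $\E_{(-)}$, again because the operators $\lambda_i-\lambda_i'$ and the grading conventions of \Cref{c:enveloping,c:tensorproduct} are compatible with suspension; the upshot is that the canonical $Q$-linear suspension isomorphisms are in fact $S$-linear.

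For part (2), given a short exact sequence $0\to M^1\to M^2\to M^3\to 0$ of color DG $R$-modules, I would choose Koszul resolutions compatibly so as to produce a degreewise-split short exact sequence of the relevant $\E$-models. Concretely, after replacing $M^i$ by finite Koszul resolutions $F^i$ over $E$ that fit into a short exact sequence $0\to F^1\to F^2\to F^3\to 0$ which is split over $Q$ (such a compatible system exists by the horseshoe-type construction underlying \Cref{finitekosres}), applying $\Hom_Q(-,N)$ yields a short exact sequence of color DG $E^\e$-modules because the sequence is $Q$-split. Applying the exact functor $\E_{(-)}=S\otimes_Q(-)$ — exact since $S$ is free over $Q$ — gives a short exact sequence of color DG $S$-modules, and its associated long exact homology sequence, which is an exact sequence of graded $S$-modules by naturality of the connecting homomorphism, is precisely the first asserted sequence under the identification of \Cref{t:1}. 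The second sequence is obtained dually by resolving the single module and varying the second argument through the short exact sequence $0\to M^1\to M^2\to M^3\to 0$ directly, using that $\E_{F,-}$ is exact in $N$ for fixed $F$.

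The main obstacle I anticipate is the compatible choice of Koszul resolutions in part (2): I must ensure that the three resolutions $F^i$ can be arranged into a short exact sequence that is split as a sequence of graded $Q$-modules, so that $\Hom_Q(-,N)$ preserves exactness and the $\E$-model sequence stays short exact. This requires a mild refinement of the construction in \Cref{finitekosres} (essentially a horseshoe lemma in the category of color DG $E$-modules, keeping $Q$-freeness), and care that the induced maps are morphisms of DG $E^\e$-modules so that the connecting maps are genuinely $S$-linear rather than merely $Q$-linear. Everything else is a routine — if notationally heavy — verification that the explicit operators $\chi_i\otimes(\lambda_i-\lambda_i')$ of \Cref{c:1} are respected by the natural maps.
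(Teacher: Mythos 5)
Your proposal is correct and follows essentially the same path as the paper: reduce to the level of DG $E^\e$-modules via a Koszul resolution of the first argument, observe that the natural direct-sum, suspension, and connecting maps are compatible with the operators there, and transfer $S$-linearity back to $\Ext_R$ through the equivariant identifications of \Cref{t:1} and \Cref{importantremark} (the paper does this for the first isomorphism via a commutative diagram and the functoriality of the composition action of $\dHH{R}{Q}$, leaving the remaining cases, including the horseshoe-type argument you spell out for part (2), to the reader). One small adjustment: the proposition concerns arbitrary color DG $R$-modules, so you should use arbitrary rather than finite Koszul resolutions, which exist because any semifree resolution over $E$ restricts to a semifree DG $Q$-module.
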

\begin{proof}
All of these follow directly from \Cref{importantremark}; we will  prove the first isomorphism in (1) while the rest are left to the reader. %We will prove that it is an isomorphism of $\dHH{R}{Q}$-modules. 
Fix a Koszul resolution $F\xra{\simeq}L$,  the claim follows from the commutativity of the following diagram where all arrows are isomorphisms of graded color $Q$-modules
\begin{center}
\begin{tikzpicture}[baseline=(current  bounding  box.center)]
 \matrix (m) [matrix of math nodes,row sep=3em,column sep=2em,minimum width=2em] {
 \Ext_R(L,M\oplus N)&\Ext_R(L,M)\oplus\Ext_R(L,N)\\
 \Ext_E(L,M\oplus N)&\Ext_E(L,M)\oplus\Ext_E(L,N)\\
 \Ext_{E^\e}(E,\Hom_Q(F,M\oplus N))&\Ext_{E^e}(E,\Hom_Q(F,N))\oplus\Ext_{E^e}(E,\Hom_Q(F,N)).\\};
\path[->] (m-1-1) edge (m-1-2);
\path[->] (m-2-1) edge (m-2-2);
\path[->] (m-3-1) edge (m-3-2);
\path[->] (m-1-1) edge (m-2-1);
\path[->] (m-2-1) edge (m-3-1);
\path[->] (m-1-2) edge (m-2-2);
\path[->] (m-2-2) edge (m-3-2);
\end{tikzpicture}
\end{center}
Both vertical maps at the top of the diagram are induced by the quasi-isomorphism $E\rightarrow R$, see \Cref{SmodstructureonExt}. The second set of vertical maps are those from \Cref{p1}(1). The bottom horizontal map is clearly $\dHH{R}{Q}$-linear, and 
so the desired result follows from \Cref{importantremark}.% the result follows.
\end{proof}

%%%%%%%%%%%%%%%%%%%%%%%%%%%%%%%%%%%%%%%%%%%%%%%%%%
\section{Support varieties}\label{section:supportvarieties}

Let $A$ be a commutative noetherian  graded  $\kk$-algebra that is concentrated in  even nonnegative  cohomological degrees. We let $\D^f(A)$ denote the bounded derived category of  finite DG $A$-modules which is obtained in the standard way of formally inverting quasi-isomorphisms between DG $A$-modules. Explicitly, the objects of $\D^f(A)$ are DG $A$-modules whose homology is a finitely generated graded $A$-module.

\begin{chunk}
Let  $\Proj A$ denote the topological space consisting of homogeneous prime ideals not containing $A^{>0}$ equipped with the Zariski topology. The closed subsets of $\Proj A$  are of the form 
\[
\cV(g_1,\ldots, g_t)=\{\mathsf{p}\in \Proj A: g_i\in \mathsf{p} \text{ for all }i\}
\]
for some homogeneous elements $g_1,\ldots, g_t\in A$. For a graded $A$-module $Y$ and $\mathsf{p}\in \Proj A$, we let $Y_\mathsf{p}$ denote the homogeneous localization of $Y$ at $\mathsf{p}$. Also, for $\mathsf{p}\in \Proj A$ define $k(\mathsf{p})$ to be $A_{\mathsf{p}}/\mathsf{p} A_{\mathsf{p}}.$
\end{chunk}

\begin{chunk}\label{c:2}
 For a DG $A$-module $X$, its (small) support is  \[\supp_AX=\{\mathsf{p}\in \Proj A: X\dt_A k(\mathsf{p})\not\simeq 0\}.\] By \cite[Theorem 2.4]{CI}, if $X\in \D^f(A)$ then 
\[ \supp_AX=\{\mathsf{p}\in \Proj A: \hh(X)_\mathsf{p}\neq 0\}.\]
Furthermore, as $\hh(X)$ is a finitely generated graded $A$-module, the support of $X$ over $A$ is exactly $\cV(g_1,\ldots, g_t)$ where $g_1,\ldots, g_t$ is some list of homogeneous generators for $\ann_A \hh(X)$. Thus,  $\supp_AX$ is a closed  subset of $\Proj A$ whenever $X\in \D^f(A)$.
\end{chunk}
\begin{chunk}\label{c:tensor} We recall the following well known property of  cohomological support (see, for example, \cite[2.1.5]{P:CS}). For DG $A$-modules $X$ and $X'$, 
\[\supp_A(X\dt_A X')=\supp_AX\cap \supp_A X'.
\]
In particular, when $X$ is semiprojective over $A$, $X\otimes_A X'\simeq X\dt_A X'$ and so  \[\supp_AX\cap \supp_A X'=\supp_A(X\otimes_A X').\]
\end{chunk}

\begin{chunk}\label{a:1}\label{c:4}
For the rest of the section we add to our fixed notation from \Cref{s:semifreediagonal} the assumption that the group of colors of $Q$ is finite. We point out that this hypothesis is equivalent to saying that the skew commuting parameters of $Q$ are roots of unity.  In particular, 
there exists  $t>0$ such that the graded subalgebra 
\[
A:=Q'[\chi_1^t,\ldots, \chi_c^t]
\] of $S$ is \emph{commutative} where $Q'$ is the subalgebra on the generators for $Q$ raised to the $t^{\text{th}}$ power. 
 Moreover, it is clear that $A\subseteq S$  is a module finite extension and $A$ has finite global dimension. 
\end{chunk}

 With \cref{a:1} in place, we  have a way to study graded Ext-modules over $R$ as modules over a commutative polynomial ring in variables of cohomological degree $2t$ with coefficients in $Q'$. This allows us to introduce a theory of support varieties analogous to the ones over commutative complete intersections as well as more general (commutative) settings (cf. \cite{VPD,AB,AI,BW,Jor,P:CS}). 
 
\begin{definition} If $M$ and $N$ are color DG $R$-modules, we define the \emph{support variety of $(M,N)$} to be 
\[\V_R(M,N)=\supp_A(\Ext_R(M,N)).\]
\end{definition}

\begin{chunk}\label{c:3}
Assume that $M$ and $N$ are  finite color DG $R$-modules. In this case, $\Ext_R(M,N)$ is  a finitely generated graded  $A$-module. Indeed,  by Theorem \ref{t:fg}, $\Ext_R(M,N)$ is  a finitely generated graded color $S$-module. Since $A\to S$ is module finite, the claim holds. That is, $\E_{F,N}$ is an object of $\D^f(A)$ where $F\xra{\simeq} M$  is a finite Koszul resolution of $M$. Hence, 
\begin{align*}
    \V_R(M,N)&=\{\mathsf{p}\in \Proj A: \Ext_R(M,N)_\mathsf{p}\neq 0\}\\
    &=\{\mathsf{p}\in \Proj A: \E_{F,N}\dt_A k(\mathsf{p})\not\simeq 0\}\\
     &=\{\mathsf{p}\in \Proj A: \E_{F,G}\dt_A k(\mathsf{p})\not\simeq 0\}
\end{align*}where  $G\xra{\simeq} N$ is a finite Koszul resolution of $N$; the first and second equalities hold by \cref{c:2}, and the third equality is justified by Theorem \ref{t:1}. 
\end{chunk}

From \cref{c:2} and \cref{c:3}, the support variety of a pair of  finite color DG $R$-modules is in fact a closed subset of $\Proj A.$ This is recorded in the first proposition below. 
\begin{proposition}If $M$ and $N$ are   finite color DG $R$-modules, then $\V_R(M,N)$ is a closed subset of $\Proj A.$ \qedhere
\end{proposition}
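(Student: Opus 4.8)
The plan is to observe that the statement is now an immediate consequence of the finite-generation input recorded in \Cref{c:3} together with the general support formula in \Cref{c:2}, so essentially no new work remains. First I would unwind the definition: by construction $\V_R(M,N)=\supp_A(\Ext_R(M,N))$, so it suffices to show that the $A$-support of the graded $A$-module $\Ext_R(M,N)$ is Zariski-closed in $\Proj A$.

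Next I would invoke the finiteness already established upstream. Fixing a finite Koszul resolution $F\xra{\simeq}M$, which exists by \Cref{finitekosres}, there is an isomorphism of graded $S$-modules $\Ext_R(M,N)\cong\hh(\E_{F,N})$ coming from \Cref{t:fg}, and by the same theorem $\Ext_R(M,N)$ is finitely generated over $S$. Since $A\subseteq S$ is a module-finite extension under the standing root-of-unity hypothesis (see \Cref{a:1}), it follows that $\Ext_R(M,N)$ is a finitely generated graded $A$-module; equivalently, $\E_{F,N}$ lies in $\D^f(A)$.

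Finally I would apply \Cref{c:2} to $X=\E_{F,N}$. That result identifies $\supp_A X$ with $\cV(g_1,\ldots,g_t)$, where $g_1,\ldots,g_t$ is any list of homogeneous generators of the ideal $\ann_A\hh(X)=\ann_A\Ext_R(M,N)$; such a finite list exists precisely because $A$ is noetherian and $\hh(X)$ is finitely generated. Hence $\V_R(M,N)=\supp_A(\Ext_R(M,N))$ equals a set of the form $\cV(g_1,\ldots,g_t)$ and is therefore a closed subset of $\Proj A$, as claimed.

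I do not anticipate any genuine obstacle here, since the only substantive ingredients—module-finiteness of $A\subseteq S$ and noetherianity of $\Ext_R(M,N)$ as an $S$-module—are supplied by \Cref{a:1} and \Cref{t:fg} respectively. The one point I would take care to verify is that the hypotheses needed to invoke \Cref{c:2} (and the cited \cite[Theorem 2.4]{CI})—namely that $A$ is commutative, noetherian, and concentrated in even nonnegative cohomological degrees—are indeed met by $A=Q'[\chi_1^t,\ldots,\chi_c^t]$, which is exactly the content of \Cref{a:1}.
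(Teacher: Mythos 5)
Your proposal is correct and follows essentially the same route as the paper: the paper's proof is precisely the combination of \cref{c:2} (support of an object of $\D^f(A)$ equals $\cV$ of generators of the annihilator of its homology, hence closed) with \cref{c:3} (finite generation of $\Ext_R(M,N)$ over $A$ via \Cref{t:fg} and the module-finite extension $A\subseteq S$ from \cref{a:1}). The only difference is that you re-derive the content of \cref{c:3} explicitly rather than citing it wholesale, which is immaterial.
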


\begin{proposition}
\label{p:basicprops}
The following hold for DG $R$-modules $L,M,N$:
\begin{enumerate}
    \item $\V_R(L\oplus M,N)= \V_R(L,N)\cup \V_R(M, N).$ 
     \item    $\V_R(L,M\oplus N)= \V_R(L,M)\cup \V_R(L,N).$
     \item For any $n\in \mathbb{Z}$, 
     $\V_R(M,\mathsf{\Sigma}^n N)=\V_R(M,N)=\V_R(\mathsf{\Sigma}^n M,N).$
    \item For  $0\to M^1\to M^2\to M^3\to 0$ an  exact sequence of color DG $R$-modules  \begin{align*}
        \V_R(M^h,N)&\subseteq \V_R(M^i,N)\cup \V_R(M^j, N)\\
         \V_R(N,M^h)&\subseteq \V_R(N,M^i)\cup \V_R(N,M^j)
    \end{align*} whenever $\{h,i,j\}=\{1,2,3\}.$
        \item If $\Ext_R^n(M,N)=0$ for all $n\gg 0$, then $\V_R(M,N)=\emptyset$; the converse holds when both $M$ and $N$ are  finite DG $R$-modules. 
\end{enumerate}
\end{proposition}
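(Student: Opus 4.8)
The plan is to deduce parts (1)--(4) formally from \Cref{prop}, which promotes the natural graded $Q$-module isomorphisms and long exact sequences for $\Ext_R$ to ones of graded $S$-modules, and hence, by restricting scalars along the module-finite inclusion $A\subseteq S$ (cf. \cref{a:1}), to ones of graded $A$-modules. Since $\V_R(M,N)=\supp_A\Ext_R(M,N)$ and the $\Ext$-modules are already homology (so that $\supp_A Y=\{\mathsf{p}\in\Proj A:Y_\mathsf{p}\neq0\}$), support then behaves as expected. For (1) and (2) I would use the $A$-linear splittings $\Ext_R(L\oplus M,N)\cong\Ext_R(L,N)\oplus\Ext_R(M,N)$ and its analogue in the second variable, together with the elementary identity $\supp_A(Y\oplus Y')=\supp_AY\cup\supp_AY'$, which holds because homogeneous localization commutes with finite direct sums. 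For (3) I would invoke the $A$-linear isomorphisms $\Ext_R(M,\mathsf{\Sigma}^nN)\cong\mathsf{\Sigma}^n\Ext_R(M,N)\cong\Ext_R(\mathsf{\Sigma}^{-n}M,N)$ and note $(\mathsf{\Sigma}^nY)_\mathsf{p}=\mathsf{\Sigma}^n(Y_\mathsf{p})$, which is nonzero exactly when $Y_\mathsf{p}$ is, so suspension does not change support.

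For (4) I would take the two long exact sequences of graded $A$-modules provided by \Cref{prop}(2) and argue by the standard ``two out of three'' principle for support: homogeneous localization at $\mathsf{p}$ is exact, so localizing the long exact sequence relating $\Ext_R(M^1,N)$, $\Ext_R(M^2,N)$, $\Ext_R(M^3,N)$ yields an exact sequence; if the localizations of the $i$- and $j$-terms vanish, then reading off the appropriate three-term segment of the rotated long exact sequence forces the localization of the $h$-term to vanish. This gives $\mathsf{p}\notin\V_R(M^h,N)$ whenever $\mathsf{p}\notin\V_R(M^i,N)\cup\V_R(M^j,N)$, and the same reasoning with the second long exact sequence handles the containments in the other module argument.

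The substantive part is (5). For the forward implication I would fix $\mathsf{p}\in\Proj A$; since $\mathsf{p}$ does not contain the irrelevant ideal $A^{>0}$, which is generated by the positive cohomological degree elements $\chi_1^t,\dots,\chi_c^t$, there is a homogeneous $g\notin\mathsf{p}$ with $|g|>0$. Writing $Y=\Ext_R(M,N)$, the localization $Y_\mathsf{p}$ is a further localization of $Y_g=Y[g^{-1}]$; because $g$ raises cohomological degree by $|g|>0$ while $Y^m=0$ for $m\gg0$, each graded piece of $Y_g$ is a filtered colimit along multiplication by $g$ of eventually-zero terms, so $Y_g=0$ and thus $Y_\mathsf{p}=0$. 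As $\mathsf{p}$ was arbitrary, $\V_R(M,N)=\emptyset$; note this direction uses no finiteness hypothesis.

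For the converse I would use that when $M$ and $N$ are finite, $Y=\Ext_R(M,N)$ is a finitely generated graded $A$-module (\Cref{t:fg} together with the module-finiteness of $A\subseteq S$, as in \cref{c:3}). By \cref{c:2}, $\V_R(M,N)=\cV(\ann_AY)$, so the hypothesis $\V_R(M,N)=\emptyset$ means every homogeneous prime containing $\ann_AY$ is irrelevant, whence $A^{>0}\subseteq\sqrt{\ann_AY}$; as $A$ is noetherian, some power satisfies $(A^{>0})^{N}\subseteq\ann_AY$, so $Y$ is a finitely generated module over $A/(A^{>0})^{N}$. Since $Q'$ lies in cohomological degree $0$ and each $\chi_i^t$ in degree $2t$, the ring $A/(A^{>0})^{N}$ is concentrated in cohomological degrees $0,2t,\dots,2t(N-1)$, and a finitely generated graded module over it is bounded in cohomological degree; hence $\Ext_R^n(M,N)=0$ for $n\gg0$. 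I expect the main obstacle to be precisely this grading bookkeeping in (5): pinning down which grading defines $\Proj A$ and $A^{>0}$, and carefully converting ``empty support'' into ``annihilated by a power of the irrelevant ideal'' and then into ``bounded in cohomological degree,'' while keeping the finiteness hypothesis exactly where it is needed.
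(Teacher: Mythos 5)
Your proposal is correct and takes essentially the same route as the paper: the paper deduces (1)--(4) from \Cref{prop} together with standard facts on homogeneous supports of graded modules over the commutative noetherian ring $A$ (citing \cite[2.2]{AI}), and proves (5) exactly as you do---an elementary argument for the forward direction, and for the converse, finite generation of $\Ext_R(M,N)$ over $A$ (\Cref{c:3}) followed by the ``power of the irrelevant ideal'' argument (via \cite[2.2(5)]{AI}). The one imprecision in your write-up, the blanket identification $\supp_A Y=\{\mathsf{p}\in\Proj A: Y_\mathsf{p}\neq 0\}$, is justified by \Cref{c:2} only when $Y$ is finitely generated, but it is harmless where you actually use it: small support is always contained in the localization-theoretic support, so $Y_\mathsf{p}=0$ still forces $\mathsf{p}\notin\supp_A Y$ in your forward half of (5), and the paper's own appeal to \cite[2.2]{AI} for (1)--(4) elides the same distinction.
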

\begin{proof}
The first four statements are clear from \ref{prop} and standard facts for homogeneous supports of graded modules over \emph{commutative} noetherian graded rings (as $A$ is); for these facts see, for example,   \cite[2.2]{AI}. For (5), the forward implication is elementary. For the converse, the assumption on $M$ and $N$ imply that  $\Ext_R(M,N)$ is a finitely generated graded $A$-module, see \Cref{c:3}, and so we can  apply \cite[2.2(5)]{AI} directly. 
\end{proof}

\begin{example}
We show that $\V_R(\kk,\kk)=\Proj (A\otimes_{Q'} \kk)$. We first notice that by \cite[Theorem 10.7]{ColorDGA} it follows that $\Ext_R(\kk,\kk)$ is isomorphic as a $S$-module, and hence as a $A$-module, to $S\otimes_Q\bigwedge\!\!{}^\mathbf{q}(\kk e_1\oplus\cdots\oplus \kk e_n)$, see \cite[Definition 10.3]{ColorDGA} for the definition of skew exterior algebra.  This tensor product is isomorphic to a direct sum of copies of $A\otimes_{Q^\prime}\kk $, therefore $\V_R(\kk,\kk)=\Proj (A\otimes_{Q'} \kk)$.
\end{example}

\begin{example}
Let $Q=\mathbb{C}_{\mathbbm{i}}[x,y], R=\frac{Q}{(x^2,y^2)}$ and let $M=\frac{R}{(x)}$, in this example we are going to calculate $\V_R(M,\CC)$. A $Q$-resolution of $M$ is given by the skew Koszul complex of the sequence $x,y^2$
\[
F:0\longrightarrow Q\xra{\begin{pmatrix}y^2\\x\end{pmatrix}} Q^2\xra{\begin{pmatrix}x&y^2\end{pmatrix}} Q\longrightarrow0.
\]
Let $E$ be the skew Koszul complex over the ring $Q$ of the sequence $x^2,y^2$, and let $e_1,e_2$ be the basis elements that differentiate to $x^2,y^2$ respectively. The complex $F$ admits a structure of color DG $E$-module by defining the action in the following way:
\[
e_1:F_0\xra{\begin{pmatrix}x\\0\end{pmatrix}} F_1,\quad e_1:F_1\xra{\begin{pmatrix}0&x\end{pmatrix}}F_2,
\]
\[
e_2:F_0\xra{\begin{pmatrix}0\\1\end{pmatrix}}F_1,\quad e_2:F_1\xra{\begin{pmatrix}1&0\end{pmatrix}}F_2.
\]
We calculate $\Ext_R(M,\CC)$ by calculating the homology of the complex $\E_{F,\CC}$.  
As $F$ is minimal,
the differential given in \ref{c:1} reduces to left multiplication by  $\chi_2\otimes\lambda_2$, using the actions given in \ref{c:actions}. 

For the rest of the computation let $(-)^*:=\Hom_Q(-,\mathbb{C})$.
%Since $\chi_2\otimes\lambda_2: \mathsf{\Sigma}^{2i} S\otimes_QF_i^*\rightarrow \mathsf{\Sigma}^{2i-2}S\otimes_QF_{i-1}^*$, 
It is directly  observed the DG $S$-module $\E_{F,\CC}$ can be regarded as the skew Koszul complex over $S\otimes_Q\CC$ of the sequence $(\chi_2,0)$:
\[
0\rightarrow \mathsf{\Sigma}^{-4}S\otimes_QF_2^*\xrightarrow{\begin{pmatrix}\chi_2\\0\end{pmatrix}} \mathsf{\Sigma}^{-2}S\otimes_QF_1^*\xrightarrow{\begin{pmatrix}0&\chi_2\end{pmatrix}} S\otimes_QF_0^*\rightarrow 0.
\]
 %This Koszul complex is semifree over $K^{\CC[\chi_1,\chi_2]}(\chi_2)$, 
 Therefore, $\Ext_R(M,\CC)$ is free over $\frac{\CC[\chi_1,\chi_2]}{(\chi_2)}$. Furthermore,  $A=\CC[\chi_1^2,\chi_2^2]$ and hence,  \[\V_R(M,\CC)=\supp_A\left(\frac{\CC[\chi_1,\chi_2]}{(\chi_2)}\right)=\V_R(\chi_2^2).\]
\end{example}

\begin{example}\label{exampleperfect}A   finite color DG $R$-module $M$  is perfect over $R$  if and only if $\V(M,\kk)=\emptyset$ (cf. Proposition \ref{p:basicprops}(5)).  In fact, for  a perfect color DG $R$-module $M$,  $\V_R(M,N)=\emptyset$ for all  finite color DG $R$-modules $N$. 
In particular, it follows from this remark and Proposition  \ref{p:basicprops}(4) that if $M$ is a finitely generated color $R$-module then 
\[
\V_R(M,N)=\V_R(\Omega_R^i(M),N)
\] for any  finite color DG $R$-module $N$ and any $i\geq 0$ where  $\Omega_R^i(M)$ denotes the $i^{\text{th}}$ syzygy module of $M$ over $R$. 
\end{example}

%\begin{example}
%By \ref{finiteinjectivedimension}, $\V_R(M,R)=\emptyset$ for all color DG $R$-modules $M$.  From this and Proposition \ref{p:basicprops} it follows that $\V_R(M,P)=\emptyset$ for any perfect DG $R$-module $P$.  Hence, as in the previous example it follows that  for any  finitely generated color $R$-module $M$  
%\[
%\V_R(N,M)=\V_R(N,\Omega_R^i(M))
%\] for any  finite color DG $R$-module $N$ and any $i\geq 0$.
%\end{example}

The following proof is adapated from \cite[Theorem 4.3.1]{P:CS} by  working over the smaller \emph{commutative} ring $A$, rather than $S$. We sketch the argument for the convenience of the reader. 
 \begin{theorem}\label{t:symmetry}
For  finite color DG $R$-modules  $M,M',N,N'$, 
\[\V_R(M,N)\cap \V_R(M',N')=\V_R(M,N')\cap \V_R(M',N).\]
\end{theorem}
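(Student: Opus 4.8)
The plan is to reduce the four-module identity to a factorization of the support of a single pair through its two arguments, and then to finish with a one-line set-theoretic computation. Concretely, I would first produce closed subsets $\V^1(M)$ and $\V^2(N)$ of $\Proj A$, depending only on $M$ (as a first argument) and only on $N$ (as a second argument) respectively, together with an equality
\[
\V_R(M,N)=\V^1(M)\cap \V^2(N).
\]
Granting this, the theorem is immediate: both sides of the asserted identity equal $\V^1(M)\cap \V^1(M')\cap \V^2(N)\cap \V^2(N')$, since $\V_R(M,N)\cap \V_R(M',N')=\V^1(M)\cap \V^2(N)\cap \V^1(M')\cap \V^2(N')$, while $\V_R(M,N')\cap \V_R(M',N)=\V^1(M)\cap \V^2(N')\cap \V^1(M')\cap \V^2(N)$, and these are the same four sets intersected.

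To obtain the factorization I would start from \Cref{c:3}, which identifies $\V_R(M,N)$ with $\supp_A\E_{F,G}$ for finite Koszul resolutions $F\xra{\simeq}M$ and $G\xra{\simeq}N$, together with the description of $\E_{F,G}=\E_{\Hom_Q(F,G)}$ in \Cref{c:1} and \Cref{c:actions}. Since $F$ is strongly perfect over $Q$ there is a natural isomorphism $\Hom_Q(F,G)\cong \Hom_Q(F,Q)\otimes_Q G$, and the two operators appearing in $\del^{\E_{F,G}}$ act through the first and the second tensor factor respectively. Thus the differential decouples, and I expect an isomorphism of color DG $S$-modules
\[
\E_{F,G}\cong \mathcal{U}_F\otimes_S\mathcal{V}_G,\qquad \mathcal{U}_F=S\otimes_Q\Hom_Q(F,Q),\quad \mathcal{V}_G=S\otimes_Q G,
\]
where $\mathcal{U}_F$ carries the differential assembled from $\del^F$ and the $e_i$-action on $F$, and $\mathcal{V}_G$ the one assembled from $\del^G$ and the $e_i$-action on $G$. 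Both $\mathcal{U}_F$ and $\mathcal{V}_G$ are finite free as graded $S$-modules, hence semiprojective over $S$, so $\otimes_S=\dt_S$ here. I would then pass to the commutative subring $A$ of \Cref{a:1}, set $\V^1(M):=\supp_A\mathcal{U}_F$ and $\V^2(N):=\supp_A\mathcal{V}_G$ (resolution-independent up to quasi-isomorphism, and depending only on $M$ and on $N$ respectively), and prove the tensor--support formula $\supp_A(\mathcal{U}_F\otimes_S\mathcal{V}_G)=\supp_A\mathcal{U}_F\cap\supp_A\mathcal{V}_G$. The inclusion $\subseteq$ is formal; for the reverse inclusion I would fix $\mathsf{p}\in\Proj A$, use that $S$ is module-finite and free over $A$ to base change along $A\to k(\mathsf{p})$, and analyze the finite-dimensional graded $k(\mathsf{p})$-algebra $S\otimes_A k(\mathsf{p})$, reducing by Nakayama for perfect complexes to detecting nonvanishing on its graded residue division rings, where the Künneth analogue of \Cref{c:tensor} over a graded field applies.

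The main obstacle is exactly this last fiberwise step. The tensor product $\mathcal{U}_F\otimes_S\mathcal{V}_G$ is formed over the \emph{noncommutative} ring $S$, whose fibers $S\otimes_A k(\mathsf{p})$ are in general both reducible, because $A\subseteq S$ is a ramified cover in the $x_1,\ldots,x_n$ directions, and non-reduced, because some $\chi_i^t$ lie in $\mathsf{p}$. A naive tensor--support formula can fail over such a ring, and the point where it is rescued is the color structure: since $M$, $N$, and the entire construction are $G$-equivariant, the relevant supports over each fiber are unions of full orbits of the deck group, so $\mathcal{U}_F$ and $\mathcal{V}_G$ are ``all or nothing'' on the components of $S\otimes_A k(\mathsf{p})$. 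This is what makes the formula hold, and it is the step on which I would spend the most care; it is also precisely where working over the commutative ring $A$ rather than $S$, and exploiting the color machinery recorded in \Cref{a:1}, is essential.
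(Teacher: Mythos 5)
Your reduction of the four-module identity to a factorization $\V_R(M,N)=\V^1(M)\cap\V^2(N)$ is set-theoretically sound, but the construction you propose for the factorization breaks at the first step and cannot be repaired in the form stated. The differential of $\E_{F,G}$ in \Cref{c:1} uses the operators $\lambda_i-\lambda_i'$, i.e.\ multiplication by the \emph{cycle} $1\otimes e_i-e_i\otimes 1\in E^\e$, and it is only this difference that is a chain map on $\Hom_Q(F,G)$. Individually, since $\del^E(e_i)=f_i\neq 0$, each of $\lambda_i$ (precomposition with the $e_i$-action on $F$) and $\lambda_i'$ (postcomposition with the $e_i$-action on $G$) has graded commutator with $\del^{\Hom_Q(F,G)}$ equal to multiplication by $f_i$. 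Consequently the ``differentials'' you put on $\mathcal{U}_F=S\otimes_Q\Hom_Q(F,Q)$ and $\mathcal{V}_G=S\otimes_Q G$ square to $\mp\sum_{i}\chi_i\otimes f_i\cdot(-)$ rather than to zero: these are curved (matrix-factorization-type) objects with opposite curvatures $\pm\sum_i\chi_i f_i\in S$, not DG $S$-modules, and the curvatures cancel only after forming the tensor product --- which is precisely why \Cref{c:1} is built from $\lambda_i-\lambda_i'$ in the first place. In particular $\supp_A\mathcal{U}_F$ and $\supp_A\mathcal{V}_G$ are undefined (support requires a complex), so $\V^1(M)$ and $\V^2(N)$ do not exist as described. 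The standard way to kill the curvature is to pass to coefficients where the $f_i$ vanish (over $S\otimes_Q R$ or $A\otimes_{Q'}\kk$), but that collapses the $Q'$-directions of $\Proj A$ and yields only the residue-field-level factorization; that is the content of \Cref{c:symmetry} (note the forced intersection with $\V_R(\kk,\kk)$ there), which is strictly weaker than \Cref{t:symmetry}.

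Even granting some substitute for $\mathcal{U}_F,\mathcal{V}_G$, your key formula $\supp_A(\mathcal{U}\otimes_S\mathcal{V})=\supp_A\mathcal{U}\cap\supp_A\mathcal{V}$ --- a tensor over the noncommutative ring $S$ measured by support over $A$ --- is exactly the hard point, and your sketch (deck-group orbits, components being ``all or nothing'') is not a proof; \Cref{c:tensor} only covers tensor and support over one and the same commutative ring. The paper's proof is engineered to avoid both problems. After replacing $M,M',N,N'$ by finite Koszul resolutions, each $\E_{X,Y}$ is finite free as a graded $A$-module, hence semiprojective over $A$ (this is where the finite global dimension of $A$ from \cref{c:4} enters), so $\otimes_A$ computes $\dt_A$. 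One then proves a \emph{rearrangement} isomorphism of DG $A$-modules $\E_{M,N}\otimes_A\E_{M',N'}\cong\E_{M,N'}\otimes_A\E_{M',N}$, by rewriting both sides as $(S\otimes_A S)\otimes_Q\Hom_Q(-,-)\otimes_Q\Hom_Q(-,-)$ and using the evaluation isomorphism $\Hom_Q(P,Q)\otimes_Q V\cong\Hom_Q(P,V)$ for $P$ strongly perfect over $Q$ to exchange the targets $N$ and $N'$ between the two Hom factors; then \Cref{c:tensor}, applied over the commutative ring $A$, finishes. So the correct mechanism is a four-module swap inside an honest tensor product over $A$, not a two-module factorization over $S$; if you want to salvage your approach, that rearrangement isomorphism is the statement to aim for.
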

\begin{proof}
By \cref{c:3}, we can replace $M,M',N,N'$ with their finite Koszul resolutions and so in the sequel we assume these are all strongly perfect over $Q$. In particular, $\E_{X,Y}^\natural$ is a finite rank free graded $S$-module for $X=M,M'$ and $Y=N,N'$. As $A\to S$ is a finite free extension of graded $A$-modules, $\E_{X,Y}^\natural$ is a finite rank free graded $A$-module for $X=M,M'$ and $Y=N,N'$. By applying \cite[Proposition 1.2.8]{P:CS}, $\E_{X,Y}$ is a semiprojective DG $A$-module; here the fact that $A$ has finite global dimension is essential (cf. \ref{c:4}). Thus, 
\begin{equation}
    \E_{X,Y}\dt_A\E_{X',Y'}=\E_{X,Y}\otimes_A\E_{X',Y'}\label{iso}
\end{equation}whenever $\{X,X'\}=\{M,M'\}$ and $\{Y,Y'\}=\{N,N'\}$.
Also, we have the following isomorphisms of graded $A$-modules
\begin{align*}
\E_{X,Y}\otimes_A\E_{X',Y'}&= (S\otimes_Q \Hom_Q(X,Y))\otimes_A (S\otimes_Q \Hom_Q(X',Y'))\\
&\cong (S\otimes_A S) \otimes_Q \Hom_Q(X,Y)\otimes_Q \Hom_Q(X',Y')\\
&\cong (S\otimes_A S) \otimes_Q \Hom_Q(X,Y')\otimes_Q \Hom_Q(X',Y)\\
&\cong (S\otimes_Q \Hom_Q(X,Y'))\otimes_A (S\otimes_Q \Hom_Q(X',Y))\\
&\cong \E_{X,Y'}\otimes_A\E_{X',Y}
\end{align*} 
where the third isomorphism is induced  from the  natural  evaluation map  \[\Hom_Q(P,Q)\otimes_Q V\xra{\cong} \Hom_Q(P,V)\]  being an isomorphism for  a strongly perfect $Q$-complex $P$  and for any complex $V$.
 Tracing the differentials through the isomorphisms above verifies that this in fact establishes an isomorphism of DG $A$-modules  \[\E_{X,Y}\otimes_A\E_{X',Y'}\cong 
\E_{X,Y'}\otimes_A\E_{X',Y}.\]
Combining this with (\ref{iso}) establishes the following isomorphim of DG $A$-modules
\[
 \E_{M,N}\dt_A\E_{M',N'}\cong\E_{M,N'}\dt_A\E_{M',N}. 
\] Thus, by \ref{c:tensor} 
\[
\supp_A\E_{M,N}\cap \supp_A\E_{M',N'}=\supp_A\E_{M,N'}\cap \supp_A\E_{M',N}
\]
and so \ref{c:3} yields the desired result.
\end{proof}

We obtain the following corollary from the symmetry of supports satisfied in Theorem \ref{t:symmetry}. Namely, Corollary \ref{c:symmetry} is a  consequence of Theorem \ref{t:symmetry}; since the argument is the same as in  \cite[4.3.1]{P:CS} we omit its proof here. 
\begin{corollary}\label{c:symmetry}
For  any pair of   finite color DG $R$-modules $M,N$, $\V_R(M,N)=\V_R(N,M).$ Moreover, the following closed subsets of $\Proj A$ coincide \begin{enumerate}

    \item $\supp_{A\otimes_{Q'}\kk}\left(\Ext_R(M,N)\otimes_{Q'}\kk\right)$;
    \item $\supp_{A\otimes_{Q'}\kk}\left(\Ext_R(N,M)\otimes_{Q'}\kk\right)$;
    \item $\V_R(M,N)\cap \V_R(\kk,\kk)$
       \item $\V_R(M,\kk)\cap \V_R(\kk,N)$;
        \item $\V_R(M,\kk)\cap \V_R(N,\kk)$;
           \item $\V_R(\kk,M)\cap \V_R(\kk,N)$.
\end{enumerate} In particular, $
\V_R(M,M)\cap \V_R(\kk,\kk)=\V_R(M,\kk)=\V_R(\kk,M).$
\end{corollary}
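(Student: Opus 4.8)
The plan is to deduce the whole statement from the rearrangement identity of \Cref{t:symmetry}, the computation $\V_R(\kk,\kk)=\Proj(A\otimes_{Q'}\kk)$ from the example above, and one extra containment coming from the module structure of $\Ext$ over itself. The key containment I would establish first is
\[
\V_R(M,N)\subseteq \V_R(M,M)\cap \V_R(N,N).
\]
Recall from \Cref{SmodstructureonExt} and \Cref{importantremark} that the $A$-action on $\Ext_R(M,N)$ is the restriction of the composition action of $\dHH{R}{Q}=\Ext_{E^\e}(E,E)$. This action is central and natural, so it factors through the canonical maps $\dHH{R}{Q}\to \Ext_R(M,M)$ and $\dHH{R}{Q}\to \Ext_R(N,N)$ sending a class $a$ to $a\cdot \id_M$ and $a\cdot \id_N$, respectively. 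Consequently, if $a\in A$ satisfies $a\cdot \id_M=0$ in $\Ext_R(M,M)$, then $a\cdot \xi=(a\cdot \id_M)\circ \xi=0$ for every $\xi\in\Ext_R(M,N)$; hence $\ann_A\Ext_R(M,M)\subseteq \ann_A\Ext_R(M,N)$, giving $\V_R(M,N)\subseteq\V_R(M,M)$, and symmetrically $\V_R(M,N)\subseteq \V_R(N,N)$ using the right action.

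With this containment in hand, symmetry follows formally. Applying \Cref{t:symmetry} to the quadruple $(M,N,N,M)$ yields
\[
\V_R(M,N)\cap\V_R(N,M)=\V_R(M,M)\cap\V_R(N,N).
\]
Since the containment above places both $\V_R(M,N)$ and $\V_R(N,M)$ inside $\V_R(M,M)\cap \V_R(N,N)$, while their intersection equals it, a trivial set-theoretic argument forces
\[
\V_R(M,N)=\V_R(N,M)=\V_R(M,M)\cap\V_R(N,N).
\]
In particular, taking $N=\kk$ gives $\V_R(M,\kk)=\V_R(M,M)\cap\V_R(\kk,\kk)=\V_R(\kk,M)$, which is the final displayed assertion of the corollary.

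To identify the six closed subsets I would argue as follows. The equality of (3) and (4) is exactly the instance of \Cref{t:symmetry} for the quadruple $(M,N,\kk,\kk)$, while (4)$=$(5) and (4)$=$(6) are immediate from the symmetry just established, via $\V_R(\kk,N)=\V_R(N,\kk)$ and $\V_R(M,\kk)=\V_R(\kk,M)$. For (1) and (2), set $\mathfrak a=(x_1^t,\ldots,x_n^t)$, so that $A\otimes_{Q'}\kk=A/\mathfrak a$ and, by the example, $\cV(\mathfrak a)=\V_R(\kk,\kk)$. Since $\Ext_R(M,N)$ is a finitely generated graded $A$-module by \Cref{t:fg} and $\Ext_R(M,N)\otimes_{Q'}\kk=\Ext_R(M,N)/\mathfrak a\Ext_R(M,N)$, the standard identity for homogeneous supports, $\supp_{A/\mathfrak a}(Y/\mathfrak a Y)=\supp_A Y\cap\cV(\mathfrak a)$, shows that (1) equals $\V_R(M,N)\cap\V_R(\kk,\kk)$, i.e. (3); symmetry then gives (2)$=$(3).

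The main obstacle will be the key containment $\V_R(M,N)\subseteq\V_R(M,M)\cap\V_R(N,N)$; everything else is either a direct application of \Cref{t:symmetry} or a routine fact about homogeneous supports. That containment rests on the centrality and naturality of the cohomology operators, namely the compatibility of the composition and cup actions established in \Cref{section:severalactions} (see \Cref{hochschildsameascup} and \Cref{importantremark}), which guarantees that the $A$-action on $\Ext_R(M,N)$ is realized by Yoneda multiplication with the universal classes $a\cdot \id_M$ and $a\cdot \id_N$. I would take care to verify that this factorization is exactly what the equivariance of the isomorphisms $\eta_X$ in \Cref{p:1} provides, as this is the step that cannot be reduced to a formal manipulation of \Cref{t:symmetry}.
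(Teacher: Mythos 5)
Your overall architecture --- the rearrangement identity of \Cref{t:symmetry} applied to the quadruple $(M,N,N,M)$, a two-sided containment $\V_R(M,N)\subseteq\V_R(M,M)\cap\V_R(N,N)$, elementary set theory, and then the support-of-a-quotient computation identifying (1) with (3) --- is exactly the shape of the argument the paper imports from \cite[Theorem 4.3.1]{P:CS}, and your steps (3)$=$(4), (4)$=$(5), (4)$=$(6), (1)$=$(2)$=$(3) and the final display are all correct as stated. The problem is the key containment itself, and specifically its second half, $\V_R(M,N)\subseteq\V_R(N,N)$. The paper's $S$-action on $\Ext_R(M,N)$ is constructed \emph{asymmetrically}: one resolves the first argument by a Koszul resolution $F$ and lets $\dHH{R}{Q}=\Ext_{E^\e}(E,E)$ act by composition on $\Ext_{E^\e}(E,\Hom_Q(F,N))$. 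Transporting this through the adjunction of \Cref{p1} shows the action factors through the characteristic map $S\to\Ext_R(M,M)$, $a\mapsto a\cdot\id_M$, followed by Yoneda composition in the first slot (your formula should read $\xi\circ(a\cdot\id_M)$, not $(a\cdot\id_M)\circ\xi$); this much does follow from \Cref{hochschildsameascup}, \Cref{importantremark} and the equivariance in \Cref{p:1}, and it yields $\ann_A\Ext_R(M,M)\subseteq\ann_A\Ext_R(M,N)$, hence $\V_R(M,N)\subseteq\V_R(M,M)$. But none of the cited results says anything about factoring through the \emph{second} argument: the claim that the same $A$-action is also realized as Yoneda composition with $a\cdot\id_N\in\Ext_R(N,N)$ is the color analogue of the nontrivial theorem that cohomology operators act the same through either argument (Avramov--Sun; cf.\ \cite{AB}), and it cannot be obtained ``symmetrically'' from an equivariance statement that only concerns the first slot.

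This gap is not removable: the rearrangement identity together with the one-sided containment $\V_R(M,N)\subseteq\V_R(M,M)$ does \emph{not} imply symmetry. Indeed, declare $\V(M,N):=f(M)$ for an arbitrary set-valued function $f$; then both the rearrangement identity and the one-sided containment hold trivially while symmetry fails, so some genuinely two-sided input is required. The gap is fillable with the paper's toolkit, and this is in effect how the cited source proceeds at the chain level: by \Cref{t:1}, $\Ext_R(M,N)\cong\hh(\E_{F,G})$, and composition $\Hom_Q(G,G)\otimes_Q\Hom_Q(F,G)\to\Hom_Q(F,G)$ makes $\E_{F,G}$ a left DG $\E_{G,G}$-module compatibly with the $S$-actions; the point to verify is the Leibniz-type identity for the twisted differential of \Cref{c:1} and \Cref{c:actions}, where the cross terms $\lambda_i(\beta)\circ\alpha$ and $\beta\circ\lambda_i'(\alpha)$ cancel up to a color unit. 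Granting that, any $a\in\ann_A\hh(\E_{G,G})$ kills the class of $1\otimes\id_G$ and therefore acts as zero on $\hh(\E_{F,G})$, which gives $\V_R(M,N)\subseteq\V_R(N,N)$ and completes your argument. So: right skeleton, and you correctly flagged where the difficulty lives, but the justification you offer for that step proves only half of what you need.
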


%%%%%%%%%%%%%%%%%%%%%%%%%%%%%%%%%%%%%%%%%%%%%%%%%%
\section{Vanishing of Ext modules}\label{s:ARC}

We continue with the usual hypothesis that $R$ is a skew complete intersection as in \Cref{s:semifreediagonal}  and the hypothesis used in the last section that  the group of colors of $Q$ is finite. We apply the facts from the previous section to obtain the following results over such skew complete intersections.

\begin{proposition}
\label{finiteprojdim}
Let $R$ be a skew complete intersection with a finite group of colors. For a finite color DG $R$-module  $M$, the following are equivalent: 
\begin{enumerate}
    \item $M$ is perfect over $R$; 
    \item $\Ext_R^{\gg 0}(M,M)=0$. 
\end{enumerate} In particular, if $M$ is a finitely generated color $R$-module, then $M$ has finite projective dimension over $R$ if and only if  $\Ext_R^{\gg 0}(M,M)=0$. 
\end{proposition}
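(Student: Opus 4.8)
The plan is to prove the equivalence of (1) and (2) by passing through the support variety and invoking the intersection formula of \Cref{c:symmetry}, and then to specialize to ordinary modules for the concluding assertion.

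First I would dispatch the implication (1) $\Rightarrow$ (2). If $M$ is perfect over $R$, then \Cref{exampleperfect} applied with $N=M$ gives $\V_R(M,M)=\emptyset$; since $M$ is a finite color DG $R$-module, the converse half of \Cref{p:basicprops}(5) then yields $\Ext_R^{\gg 0}(M,M)=0$.

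The more interesting direction is (2) $\Rightarrow$ (1). Assuming $\Ext_R^{\gg 0}(M,M)=0$, the forward half of \Cref{p:basicprops}(5) gives $\V_R(M,M)=\emptyset$. Now I would apply the intersection formula recorded at the end of \Cref{c:symmetry}, namely $\V_R(M,\kk)=\V_R(M,M)\cap\V_R(\kk,\kk)$; since $\V_R(M,M)=\emptyset$ this forces $\V_R(M,\kk)=\emptyset$, and by \Cref{exampleperfect} the vanishing of $\V_R(M,\kk)$ is precisely the condition that $M$ be perfect over $R$. I expect this step---using the symmetry/intersection result to convert vanishing of self-$\Ext$ into vanishing of $\Ext$ against the residue field---to be the crux of the argument; everything else is formal, as the real work has already been carried out in \Cref{t:symmetry} and \Cref{c:symmetry}.

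Finally, for the concluding ``in particular'' assertion, I would observe that a finitely generated color $R$-module $M$, regarded as a color DG $R$-module concentrated in a single degree, is perfect over $R$ exactly when it has finite projective dimension in the usual sense. With this identification the equivalence of (1) and (2) immediately specializes to the stated equivalence between finite projective dimension of $M$ and $\Ext_R^{\gg 0}(M,M)=0$.
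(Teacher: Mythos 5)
Your proposal is correct and follows essentially the same route as the paper: vanishing of self-Ext gives $\V_R(M,M)=\emptyset$ via \Cref{p:basicprops}(5), the intersection formula $\V_R(M,M)\cap\V_R(\kk,\kk)=\V_R(M,\kk)$ from \Cref{c:symmetry} forces $\V_R(M,\kk)=\emptyset$, and \Cref{exampleperfect} then yields perfection. The only cosmetic difference is that the paper dismisses (1)~$\Rightarrow$~(2) as trivial (a perfect module directly has eventually vanishing Ext), whereas you route it through the support machinery; both are valid.
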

\begin{proof}
The  implication ``(1) implies (2)" is trivial,  so we assume $\Ext_R^{\gg 0}(M,M)=0.$ By Proposition \ref{p:basicprops}(5), it follows that $\V_R(M,M)=\emptyset. $ It follows from \cref{c:symmetry} that 
\[
\emptyset=\V_R(M,M)=\V_R(M,M)\cap\V_R(\kk,\kk)=\V_R(M,\kk).
\]
Now the desired result is obtained from  Example \ref{exampleperfect}.
\end{proof}

\begin{proposition}
Let $R$ be a skew hypersurface with finite group of colors. If $M$ and $N$ are  finite color DG $R$-modules such that  $\Ext_R^{\gg 0}(M,N)=0$, then $M$ or $N$ is a perfect DG $R$-module. In particular, if $M$ and $N$ are finitely generated color $R$-modules such that $\Ext_R^{\gg 0}(M,N)=0$, then $\pd_R M<\infty$ or $\pd_RN<\infty.$
\end{proposition}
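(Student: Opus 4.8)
The plan is to reduce the statement to a fact about the geometry of support varieties: for a skew hypersurface the ambient variety $\V_R(\kk,\kk)$ is a single point, so two closed subsets of it with empty intersection cannot both be nonempty.

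First I would record what the hypersurface hypothesis buys us. Here $c=1$, so there is a single cohomology operator $\chi_1$, and the commutative subalgebra of \Cref{a:1} is $A=Q'[\chi_1^t]$, a polynomial ring over $Q'$ in a \emph{single} variable. Consequently the fiber $A\otimes_{Q'}\kk\cong \kk[\chi_1^t]$ is a polynomial ring in one variable, so $\Proj(A\otimes_{Q'}\kk)$ consists of exactly one point. Since $\V_R(\kk,\kk)=\Proj(A\otimes_{Q'}\kk)$ (as computed earlier for the pair $(\kk,\kk)$), the variety $\V_R(\kk,\kk)$ is therefore a single point.

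Next I would turn the Ext-vanishing into an emptiness statement. From $\Ext_R^{\gg0}(M,N)=0$ and \Cref{p:basicprops}(5) we get $\V_R(M,N)=\emptyset$. Applying the symmetry result \Cref{c:symmetry} (the equality of items (3) and (5)) gives
\[
\V_R(M,\kk)\cap \V_R(N,\kk)=\V_R(M,N)\cap \V_R(\kk,\kk)=\emptyset .
\]
Moreover, by the final displayed equality of \Cref{c:symmetry}, one has $\V_R(M,\kk)=\V_R(M,M)\cap\V_R(\kk,\kk)$ and likewise for $N$, so both $\V_R(M,\kk)$ and $\V_R(N,\kk)$ are contained in $\V_R(\kk,\kk)$. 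The conclusion then follows by a counting argument on the single point: since $\V_R(M,\kk)$ and $\V_R(N,\kk)$ are subsets of the one-point set $\V_R(\kk,\kk)$ whose intersection is empty, at least one of them must be empty. By the perfectness criterion of \Cref{exampleperfect} (that $M$ is perfect over $R$ if and only if $\V_R(M,\kk)=\emptyset$), the corresponding module $M$ or $N$ is perfect over $R$. The ``in particular'' assertion then follows because a finitely generated color $R$-module, viewed as a DG module concentrated in degree zero, is perfect precisely when it has finite projective dimension.

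I expect the only real content to be the first step: verifying that for a hypersurface the commutative subalgebra $A$ genuinely has a single $\chi$-variable, so that $\V_R(\kk,\kk)$ is zero-dimensional. Everything afterwards is formal, relying on the intersection/symmetry formula already established in \Cref{c:symmetry} together with the criterion of \Cref{exampleperfect}. The slightly delicate point to keep in mind is that the argument uses the containments $\V_R(M,\kk),\V_R(N,\kk)\subseteq\V_R(\kk,\kk)$ in an essential way, so one must invoke the last line of \Cref{c:symmetry} and not merely its intersection formula.
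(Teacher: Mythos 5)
Your proof is correct and follows essentially the same route as the paper's: both convert the Ext-vanishing into $\V_R(M,\kk)\cap\V_R(N,\kk)=\emptyset$ via \Cref{p:basicprops}(5) and \Cref{c:symmetry}, observe that the hypersurface hypothesis forces these varieties to live in the one-point space $\Proj\kk[\chi_1^t]$, and conclude via \Cref{exampleperfect}. The only (harmless) difference is that you justify the containments $\V_R(M,\kk),\V_R(N,\kk)\subseteq\V_R(\kk,\kk)$ explicitly through the final display of \Cref{c:symmetry}, where the paper simply asserts the identification with subsets of $\{(0)\}$ -- your version is, if anything, slightly more careful on this point.
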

\begin{proof}
By \cref{c:symmetry} it follows that
\begin{equation}
\emptyset=\V_R(M,N)=\V_R(M,N)\cap\V_R(\kk,\kk)=\V_R(M,\kk)\cap\V_R(N,\kk).\label{emptyint} 
\end{equation}
The assumption that $R$ is a hypersurface implies  $\Proj \kk[\chi_1^t]=\{(0)\}$ and so    $\V_R(M,\kk)$ and $\V_R(N,\kk)$ are  naturally identified with subsets of $\{(0)\}.$
 Therefore, from (\ref{emptyint})  it follows that  one of $\V_R(M,\kk)$ or $\V_R(N,\kk)$  must be empty. Hence,  $M$ or $N$ is perfect over $R$ (cf. Example \ref{exampleperfect}). \end{proof}
\begin{theorem}[Generalized Auslander-Reiten Conjecture]\label{theoremARC}
Let $R$ be a skew complete intersection with finite group of colors and let  $M$ be a finitely generated color $R$-module. If 
$
\Ext_R^i(M,M\oplus R)=0$ for all $i>r$, then $\pd_R M\leq r$.
\end{theorem}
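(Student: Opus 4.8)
The plan is to decompose the hypothesis $\Ext_R^i(M,M\oplus R)=0$ for $i>r$ into the two vanishing statements $\Ext_R^i(M,M)=0$ and $\Ext_R^i(M,R)=0$ (both for all $i>r$) and to use each for a separate purpose: the self-$\Ext$ vanishing forces $M$ to have finite projective dimension, while the vanishing against $R$ bounds that dimension. For the first, I would simply observe that $\Ext_R^i(M,M)=0$ for $i>r$ gives $\Ext_R^{\gg0}(M,M)=0$, so \Cref{finiteprojdim} applies and yields $\pd_R M<\infty$. Writing $d:=\pd_R M$, the goal reduces to proving $d\leq r$. This is where the support-variety machinery is really being used, since the passage from high-degree self-orthogonality to finite projective dimension is exactly \Cref{finiteprojdim}, which itself rests on the symmetry $\V_R(M,M)\cap\V_R(\kk,\kk)=\V_R(M,\kk)$ recorded in \Cref{c:symmetry}; beyond citing this there is nothing further to do in this step.

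For the second step I would show that finiteness of $d=\pd_R M$ forces $\Ext_R^d(M,R)\neq 0$. Since $R$ is connected graded over $\kk$, it is graded-local with graded maximal ideal $\mathfrak{m}=R_{>0}$, so $M$ admits a minimal color free resolution $F\xra{\simeq}M$ with $F_d\neq 0$, $F_{d+1}=0$, and $\partial(F)\subseteq\mathfrak{m}F$. Applying $\Hom_R(-,R)$ and using minimality, the dualized top differential $\partial_d^{*}$ has image contained in $\mathfrak{m}F_d^{*}$; hence $\Ext_R^d(M,R)=\coker\partial_d^{*}$ surjects onto the nonzero module $F_d^{*}/\mathfrak{m}F_d^{*}$, and is therefore nonzero.

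Combining the two steps concludes the proof: the hypothesis furnishes $\Ext_R^i(M,R)=0$ for all $i>r$, while the second step gives $\Ext_R^d(M,R)\neq 0$, and together these force $d\leq r$, that is, $\pd_R M\leq r$. The only point needing care—and the one I would check most carefully—is that minimal resolutions in the color (that is, $G$-graded, $\c$-twisted) setting behave as in the classical commutative graded case, so that the dual-of-a-minimal-resolution computation in the second step is legitimate; the bicharacter $\c$ only twists signs in products and leaves the containment $\partial(F)\subseteq\mathfrak{m}F$ untouched, so this verification is routine rather than a genuine obstacle.
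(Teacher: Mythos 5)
Your proof is correct, and its first half is exactly the paper's: both of you split the hypothesis into $\Ext_R^{\gg0}(M,M)=0$ and $\Ext_R^{>r}(M,R)=0$, and feed the former into \Cref{finiteprojdim} (whose content is indeed the support-variety symmetry of \Cref{c:symmetry}) to conclude $d:=\pd_RM<\infty$. Where you genuinely diverge is in proving that $d$ is detected by $\Ext_R(M,R)$, i.e.\ the identity \Cref{equationGARC}. The paper argues: graded Nakayama gives $d=\sup\{i\mid\Ext_R^i(M,\kk)\neq0\}$, and then the long exact sequence attached to $0\to R_+\to R\to\kk\to 0$ yields a surjection $\Ext_R^d(M,R)\twoheadrightarrow\Ext_R^d(M,\kk)\neq 0$, since $\Ext_R^{d+1}(M,R_+)=0$ once $\pd_RM=d$. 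You instead dualize a minimal color free resolution $F$ of $M$ and observe that minimality forces $\operatorname{im}\partial_d^*\subseteq\mathfrak{m}F_d^*$ (using $\Hom_R(F_d,\mathfrak{m})=\mathfrak{m}F_d^*$ for $F_d$ a finitely generated free module), so that $\Ext_R^d(M,R)=\coker\partial_d^*$ surjects onto $F_d^*/\mathfrak{m}F_d^*\neq 0$. Both arguments are elementary and rest on the same foundation—existence of minimal graded resolutions of color modules over the connected graded ring $R$ and graded Nakayama—so neither is more general than the other. The paper's long-exact-sequence version is marginally more economical in the color setting: it never dualizes a resolution, so the bicharacter twist on $\Hom$ never enters, which is precisely the point you flag at the end as the step requiring verification. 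Your version buys self-containedness: it does not presuppose the formula $\pd_RM=\sup\{i\mid\Ext_R^i(M,\kk)\neq0\}$ as a black box, but derives the needed nonvanishing directly from minimality, and the color verification you defer is indeed routine, as the unit factors coming from $\c$ do not disturb the containment $\operatorname{im}\partial_d^*\subseteq\mathfrak{m}F_d^*$.
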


\begin{proof}
%It follows by graded Nakayama's Lemma that
%\begin{align*}
%\pd_RM&=\mathrm{sup}\{i\mid\Ext_R^i(M,\kk)\neq0\}.
%\end{align*}
%By \Cref{finiteprojdim} we deduce that $M$ has finite projective dimension.
It follows from \Cref{finiteprojdim} that $M$ has finite projective dimension. The theorem now follows from the following 
\begin{equation}\label{equationGARC}
\pd_RM=\mathrm{sup}\{i\mid\Ext_R^i(M,R)\neq0\},
\end{equation}
as the right hand side is clearly at most $r$ by assumption. Hence, we prove that \Cref{equationGARC} holds. 

First, by the graded  version of Nakayama's Lemma 
\begin{align*}
\pd_RM&=\mathrm{sup}\{i\mid\Ext_R^i(M,\kk)\neq0\}.
\end{align*} Since $M$ has finite projective dimension \Cref{equationGARC} holds by the fact above and by using the exact sequence $0\rightarrow R_+\rightarrow R\rightarrow\kk\rightarrow0$, where $R_+$ is the ideal generated by the elements of $R$ of positive internal degree.
\end{proof}

\begin{remark}
It is proved in \cite{Schulz} that the ring $\kk_\mathfrak{q}[x_1,x_2]/(x_1^2,x_2^2)$ does not satisfy the Generalized Auslander-Reiten Conjecture whenever $q_{1,2}$ is not a root of unity. We point out that the module considered in \cite{Schulz} is $(x_1+x_2)$, which is \emph{not} a color module. It is unknown whether  a skew complete intersection with infinite group of colors satisfies the Generalized Auslander-Reiten Conjecture on \emph{color} modules.
\end{remark}

%%%%%%%%%%%%%%%%%%%%%%%%%%%%%%%%%%%%%%%%%%%%%%%%%%
\section{Symmetry in complexity}\label{s:cx}

We continue with the usual assumption that $R=Q/(f_1,\ldots,f_c)$ is a skew complete intersection.

\begin{chunk}
Let $\{b_i\}$ be a sequence of nonnegative integers. Recall that the complexity of $\{b_i\}$, denoted by $\cx \{b_i\}$, is the least integer $d$ such that there exists $a>0$ satisfying 
$b_i\leq ai^{d-1}$ for all $i\gg 0. $

Let  $M$ and $N$ be  finite color DG $R$-modules.
The \emph{complexity} of the pair $(M,N)$ is defined as 
$$\cx_R(M,N) = \cx \{ \dim_\kk(\Ext_R^i(M,N)\otimes_R \kk) \}.$$
\end{chunk}
In \cite[Corollary 10.10]{ColorDGA} the first two authors show  $\cx_R(\kk,\kk)=c$.
 In fact, a stronger statement about the \emph{Poincar\'{e} series} of $\kk$ is determined. To elaborate on this we introduce the following notation. 
 \begin{chunk}
 Let $M$ and $N$ be finite color $R$-modules. The \emph{Poincar\'{e} series of} $(M,N)$ is 
 \[
 {\rm P}^R_{M,N}(t):=\sum_{i\geq 0} \dim_\kk(\Ext_R^i(M,N)\otimes_R \kk)t^i.
 \]Also, we define the  \emph{Poincar\'{e} series of $M$} to be 
$  {\rm P}^R_{M}(t):= {\rm P}^R_{M,\kk}(t).$ Finally, for a (cohomologically) graded  $\kk$-module $\mathcal{M}$ we let its \emph{Hilbert series} be
\[
{\rm H}_{\mathcal{M}}(t)=\sum_{i\in \mathbb{Z}}\dim_\kk \mathcal{M}^it^i. 
\]
 \end{chunk} 
The stronger statement from \cite[Corollary 10.8]{ColorDGA}, mentioned above, says 
 \[
 {\rm P}^R_\kk(t)=\dfrac{(1+t)^n}{(1-t^2)^c}
 \]  and so there exists a polynomial $p$ of degree $c-1$ such that 
 \[
 p(i)=\dim_\kk\Ext_R^i(\kk,\kk)
 \] for all $i \gg 0. $
 The following theorem  generalizes the facts above to arbitrary pairs of modules over a skew complete intersection. We emphasize that \cref{finitecomplexity}, and its corollaries, do not require the assumption from the previous section that the parameters $q_{i,j}$ should be roots of unity.

\begin{theorem}\label{finitecomplexity}
Let $R$ be a skew complete intersection of codimension $c$ and $M$ and $N$ be finite color $R$-modules. If $\Ext_R(M,N)\otimes_R\kk\neq0$ , then the formal power series
\[
(1-t^2)^{\cx_R(M,N)}{\rm P}
^R_{M,N}(t)
\]
is a polynomial with integer coefficients that has no root at $t=1$.
In particular, $\cx_R(M,N)\leq c$.
\end{theorem}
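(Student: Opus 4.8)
The plan is to recognize ${\rm P}^R_{M,N}(t)$ as the Hilbert series of a finitely generated graded module over a polynomial ring in $c$ variables of cohomological degree two, and then read off every assertion from the Hilbert--Serre theorem together with one positivity argument.

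First I would set $\mathcal{M}:=\Ext_R(M,N)\otimes_R\kk$. By \Cref{t:fg} the module $\Ext_R(M,N)$ is finitely generated over $S=Q[\chi_1,\ldots,\chi_c]$, and its $R$-structure is the restriction of the $\dHH{R}{Q}=R[\chi_1,\ldots,\chi_c]$-action; since $\f$ annihilates $\Ext_R(M,N)$, the ideal $Q_+$ acts through $R_+$, so $\mathcal{M}=\Ext_R(M,N)\otimes_Q\kk$. This is a finitely generated graded module over $T:=S\otimes_Q\kk=\kk[\chi_1,\ldots,\chi_c]$ with $|\chi_i|=2$, and by the definitions ${\rm P}^R_{M,N}(t)={\rm H}_{\mathcal{M}}(t)$. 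The hypothesis $\Ext_R(M,N)\otimes_R\kk\neq 0$ says precisely that $\mathcal{M}\neq 0$.

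Next, Hilbert--Serre gives ${\rm H}_{\mathcal{M}}(t)=h(t)/(1-t^2)^c$ with $h\in\mathbb{Z}[t]$, so the only possible poles on the unit circle sit at $t=\pm 1$; write $a$ and $b$ for their orders, noting $a\le c$. The crux is to identify $\cx_R(M,N)$ with $a$ (the finite-dimensional case $\cx_R(M,N)=0$, where ${\rm H}_{\mathcal{M}}$ is itself a polynomial, being immediate). Writing the coefficients asymptotically as $\dim_\kk\mathcal{M}^i\sim c_1 i^{a-1}+(-1)^i c_2 i^{b-1}$, the leading coefficient $c_1$ attached to the pole at $t=1$ is strictly positive, since in the even/odd decomposition of $\mathcal{M}$ as a $T$-module it records positive multiplicity data and the two parts contribute with the same sign at $t=1$. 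Because every $\dim_\kk\mathcal{M}^i$ is non-negative, the oscillating term governed by $t=-1$ cannot dominate, which forces $a\ge b$; hence the growth rate of $\{\dim_\kk\mathcal{M}^i\}$ is exactly $a-1$ and $\cx_R(M,N)=a$. I expect this positivity comparison---showing that the pole at $t=1$ dominates the one at $t=-1$---to be the only real obstacle, everything else being bookkeeping.

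Finally, with $d=\cx_R(M,N)=a$ I would compute $(1-t^2)^d{\rm P}^R_{M,N}(t)=h(t)/(1-t^2)^{c-d}$. Since $\operatorname{ord}_{t=1}h=c-d$ and $\operatorname{ord}_{t=-1}h=c-b\ge c-d$, the factor $(1-t^2)^{c-d}$ divides $h$ in $\mathbb{Z}[t]$, so the quotient is a polynomial with integer coefficients; and because $\operatorname{ord}_{t=1}h$ equals $c-d$ exactly, this polynomial does not vanish at $t=1$. The bound $\cx_R(M,N)=a\le c$ is then immediate from $a\le c$.
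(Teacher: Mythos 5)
Your proof is correct, and its skeleton is the paper's: both identify ${\rm P}^R_{M,N}(t)$ with the Hilbert series of $\mathcal{M}=\Ext_R(M,N)\otimes_Q\kk$, a finitely generated graded module over $S\otimes_Q\kk$ (via \Cref{t:fg}), and both derive rationality with denominator $(1-t^2)^c$ from a bounded free resolution. The genuine difference is how complexity gets matched to a pole order. The paper splits $\mathcal{M}$ into its even and odd parts, regrades so the $\chi_i$ have degree one, and invokes the standard-graded dictionary between pole order at $t=1$ and coefficient growth; you keep the degree-$2$ grading and compare the pole orders $a$ at $t=1$ and $b$ at $t=-1$ directly, arguing from non-negativity of the coefficients that the oscillating part cannot dominate. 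Your route buys a cleaner endgame: it makes explicit why no cancellation can occur at $t=1$, a point the paper's recombination of the even and odd parts leaves implicit (when the two parts have equal complexity, one needs their leading contributions at $t=1$ to carry the same sign). In fact your one ``real obstacle'' is easier than you suggest and needs no appeal to multiplicity data of the even/odd pieces: if $a>b$ then $c_1>0$ because the dominant term $c_1 i^{a-1}$ must be eventually non-negative, and if $a=b$ then non-negativity on both parities forces $c_1\ge|c_2|>0$; either way the growth rate is exactly $i^{a-1}$. Two small caveats, neither a gap in substance: (i) $T=S\otimes_Q\kk$ is a \emph{skew} polynomial ring (its variables commute only up to the units $\c(f_i,f_j)$), so the classical Hilbert--Serre theorem does not apply verbatim; rationality must be justified as the paper does, from a bounded resolution by finite rank free color $T$-modules, which exists because $T$ is connected graded of finite global dimension. (ii) The divisibility of $h$ by $(1-t^2)^{c-d}$ should be observed to hold in $\mathbb{Z}[t]$, not merely $\mathbb{Q}[t]$, which follows since $(1-t^2)^{c-d}$ has leading coefficient $\pm1$.
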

\begin{proof}
By Theorem \ref{t:fg}, $\Ext_R(M,N)$ is a finitely generated color module over $S = Q[\chi_1,\dots,\chi_c\mid \gdeg{\chi_i}=\gdeg{f_i}^{-1},|\chi_i|=2]$,
a color commutative polynomial ring.  
Hence, $\mathcal{M}:=\Ext_R(M,N)\otimes_Q \kk$ is a finitely generated color module over $\mathcal{S}:=S\otimes_Q \kk.$ As  each $\chi_i$ has cohomological degree 2, $\mathcal{M}$ decomposes as a direct sum of finitely generated color $\mathcal{S}$-modules
\[
\mathcal{M}=\mathcal{M}^{\text{even}}\oplus \mathcal{M}^{\text{odd}}. 
\] Also, 
we have the equalities 
\[
{\rm P}^R_{M,N}(t)={\rm H}_{\mathcal{M}}(t)={\rm H}_{\mathcal{M}^{\text{even}}}(t)+{\rm H}_{\mathcal{M}^{\text{odd}}}(t), 
\]and 
\[
\cx_R(M,N)=\max\left(\cx\{\dim_\kk \mathcal{M}^{2i}\}, \cx\{\dim_\kk \mathcal{M}^{2i+1}\}\right).
\]
By the previous two displays, it suffices to prove the desired result when $\mathcal{M}$ is concentrated solely even degrees or solely in odd degrees; so we assume, without loss of generality, $\mathcal{M}$ is concentrated in even degrees. Therefore, for the rest of the proof we  can regrade $\mathcal{S}$ by  assigning each $\chi_i$ cohomological degree 1, and $\mathcal{M}$ will be a finitely generated $\mathcal{S}$-module.

Next, as $\mathcal{S}$ has finite global dimension it follows that $\mathcal{M}$ admits a bounded  resolution by finite rank free color $\mathcal{S}$-modules. Now using that the Hilbert series is additive along exact sequences and 
${\rm H}_{\mathcal{S}}(t)=(1-t)^{-c}$
 it follows that 
\[
{\rm H}_{\mathcal{M}}(t)=\frac{q(t)}{(1-t)^c}
\] for some polynomial $q(t)$ with integer coefficients. 
By canceling the common factors of $(1-t)$ we can write
\[
{\rm H}_{\mathcal{M}}(t)=\frac{p(t)}{(1-t)^{c'}}
\] for some polynomial with integer coefficients $p(t)$, where $p(1)\neq 0$ and $c'\leq c$. Now a direct calculation shows that $c'$ is exactly $\cx\{\dim_k \mathcal{M}^i\},$ as needed.
\end{proof}

%By \cite[Theorem 3.1]{Zhang} \textcolor{red}{add reference}
%and the Hilbert-Serre theorem \cite[\S 4.4]{BH} \textcolor{red}{can we add a more precise reference here?}, one has that
%the sequence
%$$\{ \dim_\kk(\Ext_R^i(M,N)\otimes_R \kk) \}$$ is
%given by a quasi-polynomial of period at most 2 and of degree at most
%$c$.

\begin{corollary}
Every finitely generated color $R$-module over a skew complete intersection has rational Poincar\'{e} series. 
\end{corollary}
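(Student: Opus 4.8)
The plan is to deduce this directly from \Cref{finitecomplexity} by specializing the second module argument to $\kk$. By definition one has ${\rm P}^R_M(t)={\rm P}^R_{M,\kk}(t)$, so it suffices to prove that ${\rm P}^R_{M,\kk}(t)$ is a rational function. The first observation I would record is that a finitely generated color $R$-module, regarded as a color DG $R$-module concentrated in homological degree $0$, is a \emph{finite} color DG $R$-module, and likewise $\kk=R/R_+$ is finite; hence \Cref{finitecomplexity} is applicable to the pair $(M,\kk)$.

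Next I would dispose of the vanishing hypothesis in \Cref{finitecomplexity}. Since $\Ext_R(M,\kk)$ is annihilated by $R_+$, the module $\Ext_R(M,\kk)\otimes_R\kk$ is simply $\Ext_R(M,\kk)$ itself. If this is zero, then ${\rm P}^R_{M,\kk}(t)=0$ is trivially rational; in particular this covers $M=0$. For $M\neq 0$, the graded version of Nakayama's Lemma gives $\Hom_R(M,\kk)=\Ext^0_R(M,\kk)\neq 0$, so the nonvanishing hypothesis $\Ext_R(M,\kk)\otimes_R\kk\neq 0$ holds automatically. Thus the two cases together are exhaustive and require no further input.

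In the nondegenerate case I would then invoke \Cref{finitecomplexity} verbatim with $N=\kk$ to conclude that $(1-t^2)^{\cx_R(M,\kk)}{\rm P}^R_{M,\kk}(t)$ is a polynomial with integer coefficients, and therefore
\[
{\rm P}^R_M(t)={\rm P}^R_{M,\kk}(t)=\frac{p(t)}{(1-t^2)^{\cx_R(M,\kk)}}
\]
for some integer polynomial $p(t)$, which exhibits ${\rm P}^R_M(t)$ as a rational function. I do not anticipate a genuine obstacle: the full strength of the statement is already packaged in \Cref{finitecomplexity}, and the only points demanding any care are the bookkeeping that $M$ and $\kk$ lie in the class of finite color DG $R$-modules to which that theorem applies, and the harmless handling of the case $\Ext_R(M,\kk)=0$.
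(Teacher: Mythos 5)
Your proposal is correct and is exactly the argument the paper intends: the corollary is stated as an immediate consequence of \Cref{finitecomplexity} with $N=\kk$, which is precisely your specialization. Your extra bookkeeping (that $M$ and $\kk$ are finite color DG $R$-modules, and that the hypothesis $\Ext_R(M,\kk)\otimes_R\kk\neq 0$ holds automatically for $M\neq 0$ by graded Nakayama, the zero case being trivial) is sound and merely makes explicit what the paper leaves unsaid.
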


\begin{corollary}\label{corsemifreres}
Let $M$ be a finite color $R$-module with a minimal free resolution $F\xra{\simeq} M$ over $R$. Then 
$\cx\{{\rm rank}_R F_i\} \leq c.$
\end{corollary}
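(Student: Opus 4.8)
The plan is to identify the sequence $\{\operatorname{rank}_R F_i\}$ with the coefficient sequence of the Poincar\'e series ${\rm P}^R_M(t)={\rm P}^R_{M,\kk}(t)$, and then to invoke \Cref{finitecomplexity} with $N=\kk$. First I would use minimality: since $F\xra{\simeq}M$ is a \emph{minimal} free resolution, every entry of each differential of $F$ lies in the graded maximal ideal $R_+$, so applying $\Hom_R(-,\kk)$ annihilates all differentials. Hence $\Hom_R(F,\kk)$ has zero differential and
\[
\Ext_R^i(M,\kk)=\Hom_R(F_i,\kk)\cong \kk^{\operatorname{rank}_R F_i},
\]
which gives the key identity $\operatorname{rank}_R F_i=\dim_\kk\Ext_R^i(M,\kk)$.

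Next I would observe that $\Ext_R^i(M,\kk)$ is a $\kk$-vector space, its $R$-action factoring through $R\to\kk$, so $R_+$ acts as zero and $\Ext_R^i(M,\kk)\otimes_R\kk\cong\Ext_R^i(M,\kk)$. Comparing with the previous display term by term, the Betti sequence coincides with $\{\dim_\kk(\Ext_R^i(M,\kk)\otimes_R\kk)\}$, whence by the definitions of complexity and of the Poincar\'e series
\[
\cx\{\operatorname{rank}_R F_i\}=\cx\{\dim_\kk(\Ext_R^i(M,\kk)\otimes_R\kk)\}=\cx_R(M,\kk),
\]
and ${\rm P}^R_M(t)=\sum_{i\geq 0}(\operatorname{rank}_R F_i)\,t^i$.

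Finally I would apply \Cref{finitecomplexity} to the pair $(M,\kk)$. For $M\neq 0$ one has $\Ext_R^0(M,\kk)=\Hom_R(M,\kk)\neq 0$ by graded Nakayama, so the hypothesis $\Ext_R(M,\kk)\otimes_R\kk\neq 0$ is automatic and the theorem yields $\cx_R(M,\kk)\leq c$; the case $M=0$ is trivial. Combined with the identification of the previous paragraph, this gives $\cx\{\operatorname{rank}_R F_i\}\leq c$, as desired. I do not expect a genuine obstacle here, since all the real content is contained in \Cref{finitecomplexity}; the only point deserving care is checking the minimality identity $\operatorname{rank}_R F_i=\dim_\kk\Ext_R^i(M,\kk)$ in the $G$-graded color setting, which proceeds exactly as in the classical case once one verifies that the color grading does not obstruct the vanishing of the induced differentials on $\Hom_R(F,\kk)$.
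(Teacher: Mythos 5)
Your proof is correct and follows the same route as the paper, whose entire proof is to invoke \Cref{finitecomplexity} with $N=\kk$; the identification $\operatorname{rank}_R F_i=\dim_\kk\Ext_R^i(M,\kk)$ via minimality, which you spell out, is exactly the routine step the paper treats as immediate.
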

\begin{proof}
This follows immediately from \Cref{finitecomplexity} with $N=\kk.$
\end{proof}
\begin{remark}
More can be said about specific resolutions over $R$. Namely, let $M$ be a finite color DG $R$-module and fix a Koszul resolution $F\xra{\simeq} M$. The same argument as in  \cite[Theorem 2.4]{AB:CD2} shows that the resolution of the diagonal in  \Cref{prop:Eres} determines an $R$-semifree resolution of $M$, depending on the choice of $F$. In particular, $M$ will admit a semifree $R$-resolution whose underlying graded $R$-module is   
\[R\otimes_Q \Hom_Q(S,Q)\otimes_Q F.\]  From this one obtains a second proof of \Cref{corsemifreres} since $F$ can be taken to be a strongly perfect $Q$-complex (cf. \Cref{SmodstructureonExt}). 
\end{remark}

For the remainder of the section we assume $Q$ is a
skew polynomial ring with each $q_{i,j}$ a root of unity. 
\begin{theorem}\label{symmetriccomplexity}
Let $R$ be a skew complete insersection with finite group of colors.
If $M$ and $N$ are  finite color DG $R$-modules,  then
$\cx_R(M,N) = \cx_R(N,M)$.
\end{theorem}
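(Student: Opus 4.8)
The plan is to reduce the symmetry of complexity to the symmetry of support varieties established in \Cref{c:symmetry}, together with the finiteness results of \Cref{finitecomplexity}. The key observation is that complexity should be recoverable as the dimension of an appropriate support variety, so that the identity $\V_R(M,N)=\V_R(N,M)$ translates directly into $\cx_R(M,N)=\cx_R(N,M)$.

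First I would recall from \Cref{finitecomplexity} that $\cx_R(M,N)$ equals the order of the pole at $t=1$ of the Hilbert series of $\mathcal{M}=\Ext_R(M,N)\otimes_Q\kk$, viewed as a finitely generated module over the commutative ring $\mathcal{A}:=A\otimes_{Q'}\kk$ (after passing to the commutative subring $A$ using the root-of-unity hypothesis from \Cref{a:1}). By standard commutative algebra, this pole order is precisely $\dim_{\mathcal{A}} \mathcal{M}$, the Krull dimension of $\mathcal{M}$ as an $\mathcal{A}$-module, which in turn is $1+\dim \operatorname{supp}_{\mathcal{A}}\mathcal{M}$, the dimension of the projective support variety $\operatorname{supp}_{\mathcal{A}\otimes_{Q'}\kk}(\mathcal{M})$ plus one (accounting for the projectivization). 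Thus I would establish the intermediate formula
\[
\cx_R(M,N)=1+\dim\, \supp_{A\otimes_{Q'}\kk}\!\left(\Ext_R(M,N)\otimes_{Q'}\kk\right),
\]
taking care that $\dim\emptyset=-1$ handles the perfect case correctly.

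With this formula in hand, the symmetry is immediate: by \Cref{c:symmetry}, items (1) and (2) coincide, namely
\[
\supp_{A\otimes_{Q'}\kk}\!\left(\Ext_R(M,N)\otimes_{Q'}\kk\right)=\supp_{A\otimes_{Q'}\kk}\!\left(\Ext_R(N,M)\otimes_{Q'}\kk\right).
\]
Since these two closed subsets of $\Proj A$ are equal, they have the same dimension, and applying the displayed complexity formula to both $(M,N)$ and $(N,M)$ yields $\cx_R(M,N)=\cx_R(N,M)$.

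The main obstacle I anticipate is verifying the intermediate formula relating complexity to the dimension of the support variety, in particular reconciling the two tensor products appearing in the definitions: complexity is defined via $\Ext_R^i(M,N)\otimes_R\kk$, whereas the support variety in \Cref{c:symmetry} uses $\Ext_R(M,N)\otimes_{Q'}\kk$. I would need to check that the pole order of the Hilbert series is unchanged when passing between these, which should follow because $\Ext_R(M,N)\otimes_Q\kk$ and $\Ext_R(M,N)\otimes_R\kk$ have Hilbert series differing only by the factor $(1+t)^n$ coming from the skew exterior algebra (as in the Poincar\'e series computation for $\kk$), a factor that contributes no pole at $t=1$ and hence does not affect complexity. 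The remaining work is the standard commutative-algebra identification of pole order with Krull dimension of the graded module, which I would cite rather than reprove.
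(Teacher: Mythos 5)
Your proposal is correct in outline and follows essentially the same route as the paper's own proof: both identify $\cx_R(M,N)$ with the dimension of $\supp_{A\otimes_{Q'}\kk}\bigl(\Ext_R(M,N)\otimes_{Q'}\kk\bigr)$ --- the paper via the standard equalities among complexity, Krull dimension, and dimension of support for finitely generated graded modules over the commutative ring $A$, you via the pole order of the Hilbert series at $t=1$ --- and both then conclude from the equality of items (1) and (2) in \Cref{c:symmetry}. Up to the affine-versus-projective off-by-one bookkeeping (which you handle, and which in any case cancels because it is applied symmetrically to both pairs), this is the paper's argument.

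The one genuine flaw is your justification of the reconciliation step. The comparison you actually need is between $\Ext_R(M,N)\otimes_R\kk$, which defines $\cx_R(M,N)$ and whose Hilbert series is governed by \Cref{finitecomplexity}, and $\Ext_R(M,N)\otimes_{Q'}\kk$, which is the module appearing in \Cref{c:symmetry}(1). Instead, you compare $\Ext_R(M,N)\otimes_Q\kk$ with $\Ext_R(M,N)\otimes_R\kk$ and assert their Hilbert series differ by a factor $(1+t)^n$. That is not so: these two modules are literally equal, because the $Q$-action on $\Ext_R(M,N)$ factors through $R$; the factor $(1+t)^n$ in $\mathrm{P}^R_{\kk}(t)$ has a different origin (the skew exterior algebra on the classes of $e_1,\dots,e_n$ inside $\Ext_R(\kk,\kk)$), not a change of base ring. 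The correct reason that passing from $\otimes_R\kk$ to $\otimes_{Q'}\kk$ does not change the growth rate --- equivalently, the pole order --- is that $Q'\subseteq Q$ is module finite (\Cref{a:1}), so $\bar R:=R\otimes_{Q'}\kk$ is a finite-dimensional algebra concentrated in cohomological degree zero; graded Nakayama over $\bar R$ then gives, in each cohomological degree $i$,
\[
\dim_\kk\bigl(\Ext_R^i(M,N)\otimes_R\kk\bigr)\ \le\ \dim_\kk\bigl(\Ext_R^i(M,N)\otimes_{Q'}\kk\bigr)\ \le\ \dim_\kk(\bar R)\cdot\dim_\kk\bigl(\Ext_R^i(M,N)\otimes_R\kk\bigr),
\]
so the two sequences have the same complexity. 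This is exactly what the paper records as \Cref{equationeasiercx}. With that substitution, your intermediate formula --- and hence your proof --- goes through.
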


\begin{proof}Adopting the notation set from \Cref{a:1}, 
 $\Ext_R(M,N)$
is a finitely generated module over the $S$-subalgebra $A$. The upshot is that we may instead compute complexity using $A$ rather than $S$.  Also, since $Q'\to Q$ is a module finite extension we have 
\begin{equation}\label{equationeasiercx}
\cx_R(M,N)  =  \cx \{ \dim_\kk(\Ext_R^i(M,N)\otimes_{Q'} \kk) \}.\end{equation}
Therefore, 
\begin{align*}
\cx_R(M,N) & =   \cx \{ \dim_\kk(\Ext_R^i(M,N)\otimes_{Q'} \kk) \}\\
        & =   \dim_A \Ext_R(M,N) \otimes_{Q'} \kk \\
        &= \dim \supp_A \Ext_R(M,N) \otimes_{Q'} \kk \\
        &= \dim \supp_A \Ext_R(N,M) \otimes_{Q'} \kk \\
        & =   \dim_A   \Ext_R(N,M) \otimes_{Q'} \kk \\
        &=\cx \{ \dim_\kk(\Ext_R^i(N,M)\otimes_{Q'} \kk) \}\\
        & =  \cx_R(N,M);
\end{align*}
where the first and last equalities are justified by \Cref{equationeasiercx}, the fourth equality is from \Cref{c:symmetry}, and the rest are standard since we are working over the graded commutative ring $A$ (see, for example, \cite[Section 4.1]{BH}).
\end{proof}

\begin{corollary}
Let $R$ be a skew complete intersection with finite group of colors. Let $M$ and $N$ be  finite color DG $R$-modules, then
\[\Ext_R^{\gg 0}(M,N)=0 \iff \Ext_R^{\gg 0}(N,M)=0.\]
\end{corollary}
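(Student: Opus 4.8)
The plan is to obtain this equivalence as a formal consequence of two results already in hand: the symmetry of support varieties in \Cref{c:symmetry} and the characterization of eventual vanishing of $\Ext$ in \Cref{p:basicprops}(5). Because $M$ and $N$ are finite color DG $R$-modules, the converse direction of \Cref{p:basicprops}(5) is available, and I would first record the resulting equivalences in both module orderings:
\[
\Ext_R^{\gg 0}(M,N)=0 \iff \V_R(M,N)=\emptyset \qquad\text{and}\qquad \Ext_R^{\gg 0}(N,M)=0 \iff \V_R(N,M)=\emptyset.
\]

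Next I would invoke \Cref{c:symmetry}, which supplies the equality $\V_R(M,N)=\V_R(N,M)$. Concatenating the three equivalences then yields the statement directly:
\[
\Ext_R^{\gg 0}(M,N)=0 \iff \V_R(M,N)=\emptyset \iff \V_R(N,M)=\emptyset \iff \Ext_R^{\gg 0}(N,M)=0.
\]
The only point demanding any attention is that both finiteness hypotheses required by the converse in \Cref{p:basicprops}(5) are satisfied, which holds by assumption.

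There is no substantive obstacle at this stage: the real work lies in \Cref{c:symmetry} (and, behind it, the rearrangement isomorphism of \Cref{t:symmetry}), so the corollary is purely formal once those are in place. For completeness I note an alternative route through \Cref{symmetriccomplexity}, in keeping with the placement of the statement: a sequence of nonnegative integers has complexity zero precisely when it is eventually zero, so $\cx_R(M,N)=0$ if and only if $\Ext_R^i(M,N)\otimes_R\kk=0$ for $i\gg 0$, and the graded Nakayama lemma (each $\Ext_R^i(M,N)$ being a finitely generated graded $R$-module, bounded below in internal degree) upgrades this to $\Ext_R^{\gg 0}(M,N)=0$; symmetry of complexity then finishes the argument. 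I prefer the support-variety route, as it sidesteps this Nakayama bookkeeping and reads off the conclusion in a single chain of equivalences.
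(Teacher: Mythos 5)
Your proof is correct: both halves of \Cref{p:basicprops}(5) and the equality $\V_R(M,N)=\V_R(N,M)$ from \Cref{c:symmetry} apply under exactly the finiteness hypotheses the corollary assumes, so your chain of equivalences closes without gaps. It is, however, not the route the paper takes. The corollary is placed immediately after \Cref{symmetriccomplexity}, and the intended deduction (spelled out in the introduction) is via symmetry of complexity: $\Ext_R^{\gg 0}(M,N)=0$ holds if and only if $\cx_R(M,N)=0$, since complexity zero means $\dim_\kk\bigl(\Ext_R^i(M,N)\otimes_R\kk\bigr)=0$ for $i\gg 0$, and graded Nakayama (each $\Ext_R^i(M,N)$ being finitely generated, by \Cref{t:fg}) upgrades this to vanishing of $\Ext_R^i(M,N)$ itself; then $\cx_R(M,N)=\cx_R(N,M)$ finishes. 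This is precisely the ``alternative route'' you sketch at the end and correctly flag as requiring Nakayama bookkeeping. Your support-variety chain is the shorter path: it consumes \Cref{c:symmetry} and \Cref{p:basicprops}(5) directly and avoids that bookkeeping, because the appeal to \cite[2.2(5)]{AI} inside \Cref{p:basicprops}(5) already packages the equivalence between empty support and eventual vanishing. The two arguments are close relatives rather than independent, since \Cref{symmetriccomplexity} is itself proved from \Cref{c:symmetry}, so both ultimately rest on the rearrangement isomorphism of \Cref{t:symmetry}. What the paper's route buys is that the corollary appears as a shadow of the finer numerical statement that the complexities agree, in keeping with the theme of \Cref{s:cx}; what yours buys is economy.
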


\section*{Acknowledgements}
We  thank Benjamin Briggs for references regarding braided Hochschild cohomology and Jason Gaddis for sharing his notes on the Generalized Auslander-Reiten Conjecture.

%%%%%%%%%%%%%%%%%%%%%%%%%%%%%%%%%%%%%%%%%%%%%%%%%%
\appendix
\section{Skew divided powers algebra} \label{a:2}
%%%%%%%%%%%%%%%%%%%%%%%%%%%%%%%%%%%%%%%%%%%%%%%%%%

In this appendix, we show that the dual of a color polynomial ring under the convolution
product is isomorphic (as an algebra) to a color divided powers algebra and vice versa.
For background regarding skew divided powers algebras, see \cite[Section 6]{ColorDGA}.

Let $Q$ be a color commutative $\kk$-algebra.  That is, we assume that $Q$
admits a $G$-grading where $G$ is an abelian group, and that $ab = \c(\sigma,\tau)ba$ for all
$G$-homogeneous elements $a \in Q_\sigma$ and $b \in Q_\tau$, where $\c : G \times G \to \kk^*$ 
is an alternating bicharacter of $G$.  This bicharacter $\c$ is fixed throughout.
When necessary, we denote the $G$-degree of a homogeneous element by $\calG(x)$,
and we will abuse notation and write $\c(a,b)$ for $\c(\calG(a),\calG(b))$.

Next, we let $A=Q[x_1,\ldots,x_n\mid \gdeg{x_i}=\sigma_i, |x_i|=d_i,i=1,\ldots,n]$, with $\sigma_i\in G$ and $d_i$ positive integers. We denote a monomial in $A$ by $\bsx^{\bsalpha}$ where
$\bsalpha = (\alpha_1,\dots,\alpha_n) \in \bbN^n$ is an exponent vector.

%Next, we let $A = Q_\c[x_1,\dots,x_n]$ \textcolor{red}{use new notation} be the color polynomial extension with generators
%$x_1,\dots,x_n$, with each $x_i$ of $G$-degree $\sigma_i$ and of even positive degree $d_i$.
%We call this new grading the homological degree of $x_i$.  This means that
%$x_ix_j = \c(x_i,x_j)x_jx_i$ and that for all homogeneous elements $a \in Q_\tau$, one has
%$x_ia = \c(x_i,a)ax_i$. We denote a monomial in $A$ by $\bsx^{\bsalpha}$ where
%$\bsalpha = (\alpha_1,\dots,\alpha_n) \in \bbN^n$ is an exponent vector.

Let $\Delta : A \to A \otimes_Q A$ be the coproduct defined by setting
$\Delta(x_i) = x_i \otimes 1 + 1 \otimes x_i$ and extending by products and linearity.
For a general element of $A$, we use Sweedler's notation and let
$\Delta(a) = \sum a_{(1)} \otimes a_{(2)}$.
Note that this coproduct is bihomogeneous for the canonical bigrading on $A \otimes_Q A$.
Using the definition of $\Delta$, a straightforward calculation shows that
\begin{equation}\label{coprodMon}
\Delta(\bsx^\bsalpha) = \sum_{\bsbeta + \bsgamma = \bsalpha} C(\bsx^\bsbeta,\bsx^\bsgamma)^{-1}
  \binom{\bsalpha}{\bsbeta} \bsx^\bsbeta \otimes \bsx^\bsgamma,
\end{equation}
where $\binom{\bsalpha}{\bsbeta} = \binom{\alpha_1}{\beta_1}\cdots\binom{\alpha_n}{\beta_n}$, 
and $C(\bsx^\bsbeta,\bsx^\bsgamma)$ is the element of $\kk^*$ that satisfies
$\bsx^\bsbeta\bsx^\bsgamma = C(\bsx^\bsbeta,\bsx^\bsgamma)\bsx^{\bsbeta + \bsgamma}$.
Note that $C(-,-)$ is a bicharacter defined on the monoid of monomials
%$G$ \textcolor{red}{no it is not! It is a bicharacter on the group of monomials and their inverses}
(but is not alternating), and satisfies
\begin{equation} \label{chiAndC}
\c(\bsx^\bsbeta,\bsx^\bsgamma) =
C(\bsx^\bsbeta,\bsx^\bsgamma)C(\bsx^\bsgamma,\bsx^\bsbeta)^{-1}.
\end{equation}
See \cite{FMM} for more details regarding the $C(-,-)$ pairing.

%Let $A^*$ be the color graded $Q$-dual of $A$.  That is, $Q^* = \grHom_Q(A,Q)$,
%the set of left color linear graded homomorphisms from $A$ to $Q$ \textcolor{red}{I think we have been just calling this $\mathrm{Hom}_Q$}.
Let $A^*=\mathrm{Hom}_Q(A,Q)$,
note that $A^*$ is a free left and right $Q$-module, and is spanned by the
dual basis of monomials $(\bsx^{\bsalpha})^*$;
we denote the element $(\bsx^\bsalpha)^*$ by $\bsxi^\bsalpha$.
Further, if the bidegree of $\bsx^{\bsalpha}$ is $(\sigma,d)$, then the bidegree
of $(\bsx^\alpha)^*$ is $(\sigma^{-1},-d)$.

We may now define a product on $A^*$ using the coproduct on $A$.  Indeed,
for any $\varphi,\psi \in A^*$ and $a \in A$ bihomogeneous:
\begin{equation} \label{convolProd}
(\varphi\psi)(a) = \sum \c(\psi,a_{(1)})\varphi(a_{(1)})\psi(a_{(2)}).
\end{equation}

The reason for the appearance of the $\c$ factor is due to its presence in the
canonical isomorphism $V^* \otimes_Q W^* \cong (V \otimes_Q W)^*$ in the category
of color $Q$-modules.

Before continuing, we prove a lemma regarding this product,
as it demonstrates some of the ideas needed to justify claims which follow.
\begin{lemma}
Let $\bsbeta$ and $\bsgamma$ be exponent vectors, and let $\bsxi^\bsbeta$, $\bsxi^\bsgamma$
be the duals of $\bsx^\bsbeta$ and $\bsx^\bsgamma$, respectively.  Then one has
$$
\bsxi^\bsbeta\bsxi^\bsgamma = C(\bsx^\bsgamma,\bsx^\bsbeta)^{-1}\binom{\bsbeta + \bsgamma}{\bsbeta}
                              \bsxi^{\bsbeta+\bsgamma}.
$$
\end{lemma}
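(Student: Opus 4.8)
The plan is to determine $\bsxi^\bsbeta\bsxi^\bsgamma$ by pairing it against the monomial basis of $A$. Since the dual monomials form a basis of $A^*$, it suffices to compute $(\bsxi^\bsbeta\bsxi^\bsgamma)(\bsx^\bsalpha)$ for every exponent vector $\bsalpha$ and then read off the coefficients. First I would expand this pairing using the convolution product \eqref{convolProd} with $\varphi=\bsxi^\bsbeta$ and $\psi=\bsxi^\bsgamma$, and then substitute the explicit coproduct \eqref{coprodMon} for $\Delta(\bsx^\bsalpha)$, writing the summation indices as $\bsbeta'+\bsgamma'=\bsalpha$ to avoid clashing with the fixed vectors $\bsbeta,\bsgamma$ of the statement.

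The key simplification is that $\bsxi^\bsbeta(\bsx^{\bsbeta'})=\delta_{\bsbeta,\bsbeta'}$ and $\bsxi^\bsgamma(\bsx^{\bsgamma'})=\delta_{\bsgamma,\bsgamma'}$, so in the resulting double sum only the single term with $\bsbeta'=\bsbeta$ and $\bsgamma'=\bsgamma$ survives. Such a term occurs precisely when $\bsalpha=\bsbeta+\bsgamma$; hence $(\bsxi^\bsbeta\bsxi^\bsgamma)(\bsx^\bsalpha)=0$ unless $\bsalpha=\bsbeta+\bsgamma$, which already shows that $\bsxi^\bsbeta\bsxi^\bsgamma$ is a scalar multiple of $\bsxi^{\bsbeta+\bsgamma}$. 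Evaluating on $\bsx^{\bsbeta+\bsgamma}$ then leaves the coefficient $\c(\bsxi^\bsgamma,\bsx^\bsbeta)\,C(\bsx^\bsbeta,\bsx^\bsgamma)^{-1}\binom{\bsbeta+\bsgamma}{\bsbeta}$, where the braiding scalar comes from the $\c$-factor in \eqref{convolProd} (evaluated on the first tensor factor $\bsx^\bsbeta$) and the remaining factors come from the scalar attached to the $(\bsbeta,\bsgamma)$ summand of \eqref{coprodMon}.

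The main obstacle is the scalar bookkeeping, specifically rewriting the braiding factor $\c(\bsxi^\bsgamma,\bsx^\bsbeta)$ into the asserted form. Here I would use that the color of $\bsxi^\bsgamma$ is the inverse of the color of $\bsx^\bsgamma$, so that $\c(\bsxi^\bsgamma,\bsx^\bsbeta)=\c(\bsx^\bsgamma,\bsx^\bsbeta)^{-1}$ by bicharacter-ness of $\c$, and then apply the identity \eqref{chiAndC} with the roles of $\bsbeta$ and $\bsgamma$ interchanged to write $\c(\bsx^\bsgamma,\bsx^\bsbeta)=C(\bsx^\bsgamma,\bsx^\bsbeta)C(\bsx^\bsbeta,\bsx^\bsgamma)^{-1}$. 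Substituting this in, the factor $C(\bsx^\bsbeta,\bsx^\bsgamma)$ cancels against the $C(\bsx^\bsbeta,\bsx^\bsgamma)^{-1}$ already present, leaving exactly $C(\bsx^\bsgamma,\bsx^\bsbeta)^{-1}\binom{\bsbeta+\bsgamma}{\bsbeta}$, as claimed. I expect the only real care needed is in keeping the inverses straight and confirming that the $\c$-argument in the convolution formula is paired with the first tensor factor $\bsx^\bsbeta$ rather than with $\bsx^\bsgamma$.
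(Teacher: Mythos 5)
Your proposal is correct and follows essentially the same route as the paper: expand the convolution product \eqref{convolProd} against the coproduct \eqref{coprodMon}, observe that only the $(\bsbeta,\bsgamma)$ summand survives, and then simplify the scalar $\c(\bsxi^\bsgamma,\bsx^\bsbeta)C(\bsx^\bsbeta,\bsx^\bsgamma)^{-1}$ to $C(\bsx^\bsgamma,\bsx^\bsbeta)^{-1}$ via the color of the dual basis element and \eqref{chiAndC}. The only difference is presentational — you make the evaluation against the full monomial basis explicit, whereas the paper states the surviving term directly — and your rewriting $\c(\bsxi^\bsgamma,\bsx^\bsbeta)=\c(\bsx^\bsgamma,\bsx^\bsbeta)^{-1}$ agrees with the paper's $\c(\bsxi^\bsgamma,\bsx^\bsbeta)=\c(\bsx^\bsbeta,\bsx^\bsgamma)$ by antisymmetry of the alternating bicharacter.
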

\begin{proof}
The binomial coefficient present on the right hand side comes from the binomial coefficient
present in the coproduct formula \eqref{coprodMon} above.  By the
coproduct formula and the definition of the convolution product \eqref{convolProd},
the additional unit factor
is equal to $\c(\bsxi^\bsgamma,\bsx^\beta)C(\bsx^\bsbeta,\bsx^\bsgamma)^{-1}$.  Since
\[
\c(\bsxi^\bsgamma,\bsx^\bsbeta)C(\bsx^\bsbeta,\bsx^\bsgamma)^{-1}
 = \c(\bsx^\bsbeta,\bsx^\bsgamma)C(\bsx^\bsbeta,\bsx^\bsgamma)^{-1}
 = C(\bsx^\bsgamma,\bsx^\bsbeta)^{-1},
\]
the result follows.
\end{proof}

The algebra $A^*$ also carries a system of skew divided powers.
These are defined by using divided power binomial expansion and the following
definition of divided power of a monomial:
$$(\bsxi^\bsalpha)^{(k)} = C(\bsx^\bsalpha,\bsx^\bsalpha)^{-\binom{k}{2}}
\combBracket{\bsalpha}{k}\bsxi^{k\bsalpha}$$
where for $h$ and $k$ integers, one has $\combBracket{h}{k} = \dfrac{(hk)!}{k!(h!)^k}$, and for $\bsalpha$ an exponent vector, one has
$\combBracket{\bsalpha}{k} = \combBracket{\alpha_1}{k}\cdots\combBracket{\alpha_n}{k}$.

The proof that $A^*$ satisfies the axioms of a skew divided powers algebra follows
from careful use of the fact that $C(-,-)$ and $\c(-,-)$ are bicharacters,
together with identities involving binomial coefficients and the bracket notation 
introduced above.  We provide here a proof of the following equality as an example.
\begin{proposition}
Let $x$ and $y$ be elements of $A^*$ that are homogeneous with respect to all gradings.  Then one has
$$(xy)^{(k)} = \c(y,x)^{\binom{k}{2}}x^ky^{(k)}.$$
\end{proposition}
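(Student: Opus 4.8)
The plan is to reduce the identity to the case of single dual monomials and then verify it by a direct computation with the formulas already established. Since the convolution product \eqref{convolProd} is $Q$-bilinear and the skew divided powers are defined on all homogeneous elements by the divided power binomial expansion from their values on the monomial basis, it suffices to prove the identity when $x=\bsxi^\bsbeta$ and $y=\bsxi^\bsgamma$; the passage from monomials to a general homogeneous $x,y$ is then a formal, if tedious, application of the binomial expansion together with bilinearity.

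For monomials, I would first compute the left-hand side. By the preceding lemma, $xy=\bsxi^\bsbeta\bsxi^\bsgamma=C(\bsx^\bsgamma,\bsx^\bsbeta)^{-1}\binom{\bsbeta+\bsgamma}{\bsbeta}\bsxi^{\bsbeta+\bsgamma}$ is a scalar multiple of $\bsxi^{\bsbeta+\bsgamma}$; applying the scalar rule $(cz)^{(k)}=c^k z^{(k)}$ and then the definition of the divided power of a monomial produces an explicit scalar multiple of $\bsxi^{k(\bsbeta+\bsgamma)}$ in which the unit part is governed by $C(\bsx^{\bsbeta+\bsgamma},\bsx^{\bsbeta+\bsgamma})$. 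For the right-hand side I would iterate the lemma to get $x^k=(\bsxi^\bsbeta)^k=C(\bsx^\bsbeta,\bsx^\bsbeta)^{-\binom{k}{2}}\tfrac{(k\bsbeta)!}{(\bsbeta!)^k}\bsxi^{k\bsbeta}$, use the definition to write $y^{(k)}=(\bsxi^\bsgamma)^{(k)}$, and multiply these together with one more application of the lemma to reach another scalar multiple of $\bsxi^{k(\bsbeta+\bsgamma)}$. The prefactor $\c(y,x)^{\binom{k}{2}}$ is rewritten using that $\bsxi^\bsgamma,\bsxi^\bsbeta$ carry colors inverse to those of $\bsx^\bsgamma,\bsx^\bsbeta$, so that $\c(y,x)=\c(\bsx^\bsgamma,\bsx^\bsbeta)$, which by \eqref{chiAndC} equals $C(\bsx^\bsgamma,\bsx^\bsbeta)C(\bsx^\bsbeta,\bsx^\bsgamma)^{-1}$.

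With both sides expressed as scalars times $\bsxi^{k(\bsbeta+\bsgamma)}$, the identity splits into two independent checks. The first is that the unit ($C$-)factors agree: after expanding $C(\bsx^{\bsbeta+\bsgamma},\bsx^{\bsbeta+\bsgamma})$ by the bicharacter property, the exponents of $C(\bsx^\bsbeta,\bsx^\bsbeta)$, $C(\bsx^\bsgamma,\bsx^\bsgamma)$ and $C(\bsx^\bsbeta,\bsx^\bsgamma)$ match termwise at $-\binom{k}{2}$, while the exponent of $C(\bsx^\bsgamma,\bsx^\bsbeta)$ equals $-\tfrac{k(k+1)}{2}$ on both sides (on the left from $-k-\binom{k}{2}$, on the right from $\binom{k}{2}-k^2$). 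The second is the purely combinatorial identity
\[
\binom{\bsbeta+\bsgamma}{\bsbeta}^{\,k}\combBracket{\bsbeta+\bsgamma}{k}
= \frac{(k\bsbeta)!}{(\bsbeta!)^k}\,\combBracket{\bsgamma}{k}\,\binom{k\bsbeta+k\bsgamma}{k\bsbeta},
\]
which reduces componentwise to a one-variable factorial identity, both sides evaluating to $\tfrac{(k(\beta+\gamma))!}{(\beta!)^k(\gamma!)^k\,k!}$; since this is an identity of integers it remains valid after reduction into $\kk$.

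I expect the main obstacle to be the bookkeeping of the $C(-,-)$ and $\c(-,-)$ factors, in particular tracking the exponent of $C(\bsx^\bsgamma,\bsx^\bsbeta)$ through the asymmetric placement of $x$ (an ordinary power) and $y$ (a divided power) and confirming that the two expressions for it coincide, together with making the reduction from monomials to arbitrary homogeneous elements fully rigorous via the binomial expansion. The combinatorial identity, by contrast, should be routine once reduced to a single variable.
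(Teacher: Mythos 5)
Your proof is correct and follows essentially the same route as the paper's: reduce to dual monomials $\bsxi^\bsbeta,\bsxi^\bsgamma$ via the divided power binomial expansion, express both sides as scalar multiples of $\bsxi^{k(\bsbeta+\bsgamma)}$, and match the $C(-,-)$ factors (using the bicharacter property and \eqref{chiAndC}) and the combinatorial constants separately. In fact you carry out explicitly the two checks the paper leaves as ``straightforward'' --- the exponent bookkeeping giving $-k-\binom{k}{2}=\binom{k}{2}-k^2=-\tfrac{k(k+1)}{2}$ and the factorial identity with both sides equal to $\tfrac{(k(\beta+\gamma))!}{(\beta!)^k(\gamma!)^k\,k!}$ --- and your $C(\bsx^\bsgamma,\bsx^\bsgamma)^{-\binom{k}{2}}$ factor from $y^{(k)}$ is the correct one.
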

\begin{proof}
Using divided powers binomial expansion, it is enough to prove this formula for
$x = \bsxi^\bsbeta$ and $y = \bsxi^\bsgamma$.  In this case, it is clear that
both sides of the claimed equality evaluate to a scalar multiple of 
$\bsxi^{k(\bsbeta+\bsgamma)}$.  The scalar on the left hand side is made up of
a constant involving $C(-,-)$, and combinatorial constants:
$$
C(\bsx^\bsgamma,\bsx^\bsbeta)^{-k}
C(\bsx^{\bsbeta+\bsgamma},\bsx^{\bsbeta+\bsgamma})^{-\binom{k}{2}}
\binom{\bsbeta+\bsgamma}{\bsbeta}^k
\combBracket{\bsbeta+\bsgamma}{k}.
$$
By induction, it follows that 
$$
(\bsxi^\bsbeta)^k = C(\bsx^\bsbeta,\bsx^\bsbeta)^{-\binom{k}{2}}
                    \binom{2\bsbeta}{\bsbeta}\cdots\binom{k\bsbeta}{\bsbeta}
                    \bsxi^{\bsbeta}.
$$
Therefore,  the scalar on the right hand side is therefore the product of
$$
\c(\bsx^\bsgamma,\bsx^\beta)^{\binom{k}{2}}
C(\bsx^\bsbeta,\bsx^\bsbeta)^{-\binom{k}{2}}
C(\bsx^\bsgamma,\bsx^\bsbeta)^{-\binom{k}{2}}
C(\bsx^{k\bsgamma},\bsx^{k\bsbeta})^{-1}
$$
and the combinatorial constant
$$
\binom{2\bsbeta}{\bsbeta}\cdots\binom{k\bsbeta}{\bsbeta}
\combBracket{\bsgamma}{k}
\binom{k(\bsbeta+\bsgamma)}{k\bsbeta}.
$$
It is a straightforward matter to check that the combinatorial constants on either
side of the equality agree.  To check that that the constants involving $\c(-,-)$ and $C(-,-)$
agree, one uses that $C(-,-)$ is a bicharacter \eqref{chiAndC}.
\end{proof}

We may use the product on $A$ to define a coproduct on $A^*$ as well, by declaring
$\Delta(\xi_i) = \xi_i \otimes 1 + 1 \otimes \xi_i$ and extending $\Delta$ to all of $A^*$ as in
\eqref{coprodMon}.  Using this coproduct, we may in turn consider the algebra $A^{**}$ as before.
However, one has the following result, whose proof follows along the same lines as in the
case of a commutative ring $Q$:

\begin{proposition}\label{dualpolyisdivpowalg}
The algebras $A$ and $A^{**}$ are isomorphic as color commutative graded $Q$-algebras.
In particular, the graded $Q$-dual of a skew divided powers algebra over $Q$ is isomorphic to
a color polynomial extension of $Q$.
\end{proposition}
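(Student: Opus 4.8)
The plan is to exhibit the canonical biduality map and reduce the algebra claim to the bicharacter bookkeeping already carried out in the preceding Lemma. Decompose $A=\bigoplus_{(\sigma,d)}A_{(\sigma,d)}$ into its bihomogeneous components; since $d_i\geq 1$, each $A_{(\sigma,d)}$ is spanned by the finitely many monomials $\bsx^\bsalpha$ with $\gdeg{\bsx^\bsalpha}=\sigma$ and $|\bsx^\bsalpha|=d$, hence is free of finite rank over $Q$. Because $A^*$ is the graded dual with dual basis $\{\bsxi^\bsalpha\}$, and dualization respects the bigrading, the usual biduality isomorphism for finite free modules assembles bidegree by bidegree into an isomorphism of bigraded $Q$-modules
\[
\omega\colon A\xrightarrow{\ \cong\ }A^{**},\qquad \omega(a)(\varphi)=\c(a,\varphi)\,\varphi(a),
\]
for $a,\varphi$ bihomogeneous, where the $\c$-twist is the one dictated by the identification $A^*\otimes_Q A^*\cong(A\otimes_Q A)^*$ responsible for the factor in \eqref{convolProd}; on the monomial basis $\omega(\bsx^\bsalpha)$ is a unit multiple of $(\bsxi^\bsalpha)^*$. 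The only remaining content is that $\omega$ is multiplicative.

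For this I would first record that the coproduct on $A^*$ is precisely the graded transpose of the multiplication $\mu_A\colon A\otimes_Q A\to A$ under the identification above; this is consistent with the normalization $\Delta(\xi_i)=\xi_i\otimes1+1\otimes\xi_i$, as one checks by evaluating against $\bsx^\bsbeta\otimes\bsx^\bsgamma$. Consequently the convolution product on $A^{**}$ is the transpose of this coproduct, i.e.\ the double transpose of $\mu_A$, and under the finite biduality $\omega$ the double transpose of $\mu_A$ corresponds again to $\mu_A$; this is the conceptual reason $\omega$ is an algebra map. To make this explicit in the color setting it suffices to verify the single identity $\omega(\bsx^\bsbeta)\,\omega(\bsx^\bsgamma)=C(\bsx^\bsbeta,\bsx^\bsgamma)\,\omega(\bsx^{\bsbeta+\bsgamma})$ on monomials: expanding the left-hand side with \eqref{convolProd} and the coproduct of $A^*$, the binomial coefficients collapse exactly as in the preceding Lemma, run in the opposite direction. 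The hard part, and the only real obstacle, is the sign-free color bookkeeping at this step: one must confirm that the $\c$-twist built into \eqref{convolProd} compensates the asymmetry between $C(\bsx^\bsbeta,\bsx^\bsgamma)$ and $C(\bsx^\bsgamma,\bsx^\bsbeta)$ recorded in \eqref{chiAndC}, so that the surviving scalar is $C(\bsx^\bsbeta,\bsx^\bsgamma)$ and $\omega$ is multiplicative on the nose rather than merely up to a unit.

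Once $\omega$ is a bijective, bigrading-preserving algebra map, the color commutativity of $A$ transports across it, so $A^{**}$ is color commutative and $\omega$ is an isomorphism of color commutative graded $Q$-algebras, establishing the first assertion. The \emph{in particular} clause follows by applying the construction in both directions: carried out for $A$ it shows that the graded dual of the color polynomial extension $A$ is the skew divided powers algebra $A^*$, while the isomorphism $A\cong A^{**}$ just proved, applied with $A^*$ in place of $A$, shows that the graded dual of a skew divided powers algebra is a color polynomial extension of $Q$.
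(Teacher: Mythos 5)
Your proposal is correct, but there is little in the paper to compare it against: the authors state this proposition \emph{without proof}, remarking only that ``the proof follows along the same lines as in the case of a commutative ring $Q$.'' Your biduality argument is precisely the classical argument being alluded to, transported to the color setting, so in effect you have supplied the details the paper omits. Two remarks. First, the step you flagged as ``the hard part'' and left unverified does go through, and it closes in a few lines. Since $\c$ is alternating, the twist in $\omega$ is trivial on the diagonal, so in fact $\omega(\bsx^\bsalpha)=(\bsxi^\bsalpha)^*$ exactly (not merely up to a unit). Dualizing $\mu_A$ through the twisted identification $(A\otimes_QA)^*\cong A^*\otimes_QA^*$ gives the explicit form of the coproduct on $A^*$,
\[
\Delta(\bsxi^{\bsalpha})=\sum_{\bsbeta+\bsgamma=\bsalpha}C(\bsx^{\bsgamma},\bsx^{\bsbeta})\,\bsxi^{\bsbeta}\otimes\bsxi^{\bsgamma},
\]
consistent with the normalization $\Delta(\xi_i)=\xi_i\otimes 1+1\otimes\xi_i$. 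Now evaluate $\omega(\bsx^{\bsbeta})\,\omega(\bsx^{\bsgamma})$ on $\bsxi^{\bsbeta+\bsgamma}$ using \eqref{convolProd}: only the summand $\bsxi^{\bsbeta}\otimes\bsxi^{\bsgamma}$ survives, and it contributes the coefficient $C(\bsx^{\bsgamma},\bsx^{\bsbeta})$ from $\Delta$ times the twist $\c\bigl(\omega(\bsx^{\bsgamma}),\bsxi^{\bsbeta}\bigr)=\c(\bsx^{\bsgamma},\bsx^{\bsbeta})^{-1}$, so that by \eqref{chiAndC}
\[
C(\bsx^{\bsgamma},\bsx^{\bsbeta})\,\c(\bsx^{\bsgamma},\bsx^{\bsbeta})^{-1}
=C(\bsx^{\bsgamma},\bsx^{\bsbeta})\,C(\bsx^{\bsgamma},\bsx^{\bsbeta})^{-1}C(\bsx^{\bsbeta},\bsx^{\bsgamma})
=C(\bsx^{\bsbeta},\bsx^{\bsgamma}),
\]
which is exactly your identity $\omega(\bsx^{\bsbeta})\,\omega(\bsx^{\bsgamma})=C(\bsx^{\bsbeta},\bsx^{\bsgamma})\,\omega(\bsx^{\bsbeta+\bsgamma})$, with no stray unit; hence $\omega$ is multiplicative on the nose, as you anticipated. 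Second, a small wording issue in your final paragraph: the proposition cannot literally be ``applied with $A^*$ in place of $A$,'' since its hypothesis is that the input is a color polynomial extension, and $A^*$ is a divided powers algebra. What you mean is that any free skew divided powers algebra over $Q$ is identified, by the first part of the appendix, with $A^*$ for a suitable color polynomial extension $A$, and then its graded dual is $A^{**}$, which the main assertion identifies with $A$. With that rephrasing, and the computation above inserted, your argument is complete.
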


%%%%%%%%%%%%%%%%%%%%%%%%%%%%%%%%%%%%%%%%%%%%%%%%%%
\section{An adjunction isomorphism}
%%%%%%%%%%%%%%%%%%%%%%%%%%%%%%%%%%%%%%%%%%%%%%%%%%

The  goal of this appendix is to prove \Cref{p1}. In the commutative case, this follows immediately from  \cite[(8.7)]{Mac}. 

Let $A$ be a graded color commutative DG $Q$-algebra where $Q$ is a color commutative connected graded algebra over a fixed base field $\kk$; set  $A^\e:=A\otimes_Q A^{\op}$ to be the enveloping DG algebra of $A$ over $Q$ and, as usual,  $A$ is regarded as a color DG $A^\e$-algebra via the multiplication map $\mu: A^\e\to A$ given by $$a\otimes b\mapsto ab.$$ 

Fix a color DG A-module $M$, then we  define a pair of functors (depending on $M$) \[\h: \Mod(A)\to \Mod(A^\e)\text{ and }\te: \Mod(A^\e)\to \Mod(A)\] given by 
$\h:=\Hom_Q(M,-)$ and $\te:=-\otimes_A M,$ respectively. 
The $A^\e$-module structure on $\h(N)=\Hom_Q(M,N)$ is given by 
\[
a\otimes b \cdot f :=(-1)^{|f||b|}\c(b,f)af(b-)\] while the $A$-action on $\te(N)$ is obvious one on the left of $N$ in $N\otimes_ AM$. 

\begin{chunk}
Let $N$ be a color DG $A$-module. We let $\ev_N:\Hom_Q(M,N)\otimes_A M\to N$ be the evaluation map. Namely, $$\ev_N(g\otimes m):=g(m).$$ It is straightforward to check  that $\ev_N$ is a morphism of color DG $A$-modules. 
\end{chunk}

\begin{chunk}
Fix   a color DG $A^\e$-module $X$ and  a color DG $A$-module  $N$. Let $f: X\otimes_A M\to N$ be a morphism of color DG $A$-modules and consider $\tilde{f}: X\to \Hom_Q(M,N)$ given by $$x\mapsto f(x\otimes -).$$ Observe that 
\begin{align*}
    \tilde{f}(a\otimes b\cdot x)&=f((-1)^{|b||x|}\c(b,x)axb\otimes-) \\
    &=(-1)^{|b||x|}\c(b,x)f(axb\otimes-)\\
    &=(-1)^{|a||f|+|b||x|}\c(b,x)\c(f,a)af(xb\otimes-)\\
    &=(-1)^{|a||f|+|b||x|}\c(b,x)\c(f,a)af(x\otimes b -)\\
    &=(-1)^{|a||f|+|b||x|+|b||f|+|b||x|}\c(b,x)\c(f,a)\c(\tilde{f}(x),b)a\otimes b \cdot \tilde{f}(x)\\
    &=(-1)^{(|a|+|b|)|\tilde{f}|} \c(\tilde{f},a\otimes b) a\otimes b\cdot \tilde{f}(x),
\end{align*} that is, $\tilde{f}$ is left color $A^\e$-linear. 
\end{chunk}

\begin{proposition}\label{p1}
With the notation above, $\h$ is a right adjoint to $\te.$ In particular, for each DG $A^\e$-module $X$ and DG $A$-module $N$ the following maps are inverse isomorphisms that are natural in $X$, $M$ and $N$:
\begin{enumerate}
    \item $\Phi: \Hom_A(\te(X),N)\to \Hom_{A^\e}(X,\h(N))$ given by 
    $$f\mapsto \tilde{f}$$
    \item $\Psi: \Hom_{A^\e}(X,\h(N))\to \Hom_A(\te(X),N) $ given by $$g\mapsto \ev_N \circ (g\otimes \id^M).$$
\end{enumerate}
\end{proposition}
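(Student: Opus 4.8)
The plan is to run the usual tensor--Hom adjunction argument, taking care of the color and Koszul signs peculiar to the graded color commutative DG setting. Two of the three essential verifications have already been carried out in the discussion preceding the statement: the evaluation map $\ev_N\colon \h(N)\otimes_A M\to N$ is a morphism of color DG $A$-modules, and for an $A$-linear $f\colon \te(X)\to N$ the transpose $\tilde f\colon X\to \h(N)$, $x\mapsto f(x\otimes -)$, is left color $A^\e$-linear. These are precisely what is needed to see that $\Phi$ and $\Psi$ are well defined.

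First I would confirm well-definedness of both maps as morphisms between the Hom-complexes. For $\Phi$, the $A^\e$-linearity of $\tilde f$ computed above shows $\Phi(f)=\tilde f\in\Hom_{A^\e}(X,\h(N))$. For $\Psi$, since $g\colon X\to\h(N)$ is $A^\e$-linear the map $g\otimes\id^M$ is $A$-linear, and postcomposing with the $A$-linear morphism $\ev_N$ yields $\Psi(g)=\ev_N\circ(g\otimes\id^M)\in\Hom_A(\te(X),N)$. I would also record that both assignments preserve the homological, internal, and color gradings, and that they commute with the differentials on the two Hom-complexes; this follows because $\ev_N$ is a chain map and the transpose construction is compatible with the Hom-differentials, so that $\Phi$ and $\Psi$ are genuine isomorphisms of the underlying complexes rather than merely bijections of the degree-zero cycles.

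The heart of the argument is then the mutual-inverse check, which I would carry out by evaluation on elementary tensors. Since $\id^M$ has homological degree $0$ and trivial color, the tensor-product-of-maps sign rule introduces no Koszul sign or $\c(-,-)$ factor: one has $(\tilde f\otimes\id^M)(x\otimes m)=\tilde f(x)\otimes m$ and $(g\otimes\id^M)(x\otimes m)=g(x)\otimes m$. Hence on one side $\big(\Psi\Phi(f)\big)(x\otimes m)=\ev_N(\tilde f(x)\otimes m)=\tilde f(x)(m)=f(x\otimes m)$, giving $\Psi\Phi=\id$; and on the other $\big(\Phi\Psi(g)\big)(x)=\Psi(g)(x\otimes -)=\ev_N(g(x)\otimes -)=g(x)$, giving $\Phi\Psi=\id$. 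Naturality in $X$, $M$, and $N$ is then a routine diagram chase, using functoriality of $\Hom_Q(M,-)$, of $-\otimes_A M$, and of $\ev$.

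I expect the only genuinely delicate point to be the sign and bicharacter bookkeeping, namely the $A^\e$-linearity of $\tilde f$ and the compatibility of $\Phi,\Psi$ with the Hom-differentials. The hardest instance of this, the $A^\e$-linearity computation in which the factor $\c(-,-)$ enters through the canonical isomorphism $V^*\otimes_Q W^*\cong (V\otimes_Q W)^*$, is already dispatched in the discussion before the statement; so inside the proof the remaining work is the sign-free evaluations above together with the differential-compatibility check, after which the proposition follows formally.
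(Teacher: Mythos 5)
Your proposal is correct and follows essentially the same route as the paper: the well-definedness of $\Phi$ and $\Psi$ is delegated to the two preceding chunks (the color $A^\e$-linearity of $\tilde f$ and the fact that $\ev_N$ is a morphism of color DG $A$-modules), and the heart of the proof is the same sign-free mutual-inverse check on elementary tensors, $\Psi\Phi(f)(x\otimes m)=f(x\otimes m)$ and $\Phi\Psi(g)(x)=g(x)$. The extra remarks you make about compatibility with the Hom-differentials and naturality are sound but are left implicit in the paper's own proof.
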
 
\begin{proof}
We directly check  the maps defined in (1) and (2) are mutually inverse to one another. To see this consider
\begin{align*}
    \Psi\Phi(f)(x\otimes m)&=\ev_N\circ (\tilde{f}\otimes \id^M)(x\otimes m)\\
    &=\ev_N(f(x\otimes-)\otimes m)\\
    &=f(x\otimes m),
\end{align*} 
and 
\begin{align*}
    \Phi\Psi(g)(x) &=\Phi\left(\ev_N\circ (g\otimes \id^M)\right)(x)\\
    &=\ev_N(g(x)\otimes -)\\
    &=g(x).
\end{align*} Therefore $\Phi\Psi=\id$ and  $\Psi\Phi=\id$, justifying  the proposition.
\end{proof}

%%%%%%%%%%%%%%%%%%%%%%%%%%%%%%%%%%%%%%%%%%%%%%%%%%
\bibliographystyle{amsplain}
\bibliography{biblio}
%%%%%%%%%%%%%%%%%%%%%%%%%%%%%%%%%%%%%%%%%%%%%%%%%%
\end{document}